\documentclass[11pt]{article}
\usepackage[margin=1.25in]{geometry}

\usepackage{natbib} \bibpunct{(}{)}{;}{a}{}{,}
\usepackage{hyperref}
\usepackage{amsmath,amssymb,amsthm}
\usepackage{enumerate}
\usepackage{graphicx}
\usepackage{subfig}
\usepackage{authblk}


\newenvironment{delayedproof}[1]{\par\noindent{\bf #1}}{\hfill$\square$\\[2mm]}


\newtheorem{proposition}{Proposition}
\newtheorem{lemma}{Lemma}
\newtheorem{theorem}{Theorem}
\newtheorem{example}{Example}
\newtheorem{remark}{Remark}
\newtheorem{definition}{Definition}

\renewcommand{\AA}[1]{A^{(#1)}}

\newcommand{\nub}[1]{(\nu_{#1} + b_{#1}^2)}

\newcommand{\mxbernsymbol}{\eta}

\renewcommand{\Pr}{\mathbb{P}}
\newcommand{\E}{\mathbb{E}}

\newcommand{\R}{\mathbb{R}}

\newcommand{\calI}{\mathcal{I}}

\newcommand{\calM}{\mathcal{M}}
\newcommand{\calN}{\mathcal{N}}
\newcommand{\calP}{\mathcal{P}}

\newcommand{\calS}{\mathcal{S}}
\newcommand{\symmetricmxs}{\calS}

\newcommand{\Ahat}{\hat{A}}

\newcommand{\Phat}{\hat{P}}

\newcommand{\Xhat}{\hat{X}}

\newcommand{\what}{\hat{w}}
\newcommand{\thetahat}{\hat{\theta}}
\newcommand{\rhohat}{\hat{\rho}}

\newcommand{\tauhat}{\hat{\tau}}

\newcommand{\chat}{\hat{c}}

\newcommand{\Aopt}{\mathring{A}}
\newcommand{\Vopt}{\mathring{V}}
\newcommand{\Xopt}{\mathring{X}}
\newcommand{\wopt}{\mathring{w}}

\newcommand{\Abar}{\bar{A}}
\newcommand{\Pbar}{\bar{P}}
\newcommand{\Xbar}{\bar{X}}
\newcommand{\cbar}{\bar{c}}
\newcommand{\nubar}{\bar{\nu}}
\newcommand{\bbar}{\bar{b}}

\newcommand{\Atilde}{\tilde{A}}
\newcommand{\UAtilde}{U_{\Atilde}}
\newcommand{\SAtilde}{S_{\Atilde}}
\newcommand{\Otilde}{\tilde{O}}

\newcommand{\Xtilde}{\tilde{X}}

\newcommand{\rhotilde}{\tilde{\rho}}

\newcommand{\bB}{\mathbf{B}}
\newcommand{\bZ}{\mathbf{Z}}

\newcommand{\bstar}{b^*}
\newcommand{\nustar}{\nu^*}
\newcommand{\nmin}{n_{\min}}

\newcommand{\SA}{S_A}
\newcommand{\SM}{S_M}
\newcommand{\SP}{S_P}
\newcommand{\UP}{U_P}
\newcommand{\UA}{U_A}
\newcommand{\UM}{U_M}

\newcommand{\ASE}{\operatorname{ASE}}

\newcommand{\rank}{\operatorname{rank}}

\newcommand{\tti}{2,\infty}

\newcommand{\JsubSBM}{\operatorname{J-\Gamma-SBM}}
\newcommand{\KL}{\operatorname{KL}}

\newcommand{\Phatwtd}{\Phat}
\newcommand{\Phatunif}{\Pbar}

\newcommand{\ivec}{\vec{i}}
\newcommand{\jvec}{\vec{j}}
\newcommand{\indicator}{\mathbb{I}}

\newcommand{\indsim}{\stackrel{\text{ind.}}{\sim}}

\begin{document}

\title{Recovering shared structure from multiple networks with unknown edge distributions}

\author{Keith Levin}
\affil{Department of Statistics\\
       University of Wisconsin-Madison}
\author{Asad Lodhia}
\affil{Unaffiliated}
\author{Elizaveta Levina}
\affil{Department of Statistics\\
       University of Michigan}

\maketitle

\begin{abstract}
In increasingly many settings, data sets consist of multiple samples from a population of networks, with vertices aligned across these networks. For example, brain connectivity networks in neuroscience consist of measures of interaction between brain regions that have been aligned to a common template. We consider the setting where the observed networks have a shared expectation, but may differ in the noise structure on their edges. Our approach exploits the shared mean structure to denoise edge-level measurements of the observed networks and estimate the underlying population-level parameters. We also explore the extent to which edge-level errors influence estimation and downstream inference. We establish a finite-sample concentration inequality for the low-rank eigenvalue truncation of a random weighted adjacency matrix that may be of independent interest. The proposed approach is illustrated on synthetic networks and on data from an fMRI study of schizophrenia.
\end{abstract}

\section{Introduction}
\label{sec:intro}
Many modern applications require simultaneous analysis of multiple networks, often with the goal of identifying structure
that is shared across multiple networks.
In the social sciences, this may correspond to some common underlying
structure that appears, for example, in different friendship networks
across high schools.
In biology, one may be interested in identifying the extent to which different
organisms' protein-protein interaction networks display a similar structure.
In neuroscience, one may wish to identify common patterns across
multiple subjects' brains in an imaging study.
This last application in particular is easily abstracted to the situation
where one observes a collection of independent graphs on the same vertex set,
as there are well-established and widely used algorithms that map locations
in individual brains onto a common atlas of so-called
regions of interest (ROIs),
such as the one developed by \cite{PowerETAL2011}.
Thus, the assumption of vertex correspondence across graphs
is especially common in multiple network analysis
for neuroimaging applications; see for example
\cite{LevAthTanLyzPri2017,ArrKesLevTay2017}.
A common approach to these problems
is to treat the individual networks as independent
noisy realizations of some shared structure such as a stochastic block model
\citep{LeLevLev2018} or a low-rank model \citep{TanKetVogPriSus2016}.
The goal is then to recover this underlying shared  structure.
The importance of analyzing brain data
from multiple subjects simultaneously has spurred a particularly
active line of work on this problem in neuroimaging.
As a result, we primarily focus on this literature,
but stress that the general problem of
recovering a shared structure from multiple networks
(or, more generally, multiple matrices) has applications to many other domains.

To date, most techniques for multiple-network analysis have assumed that
the observed networks come from the same distribution,
and typically have binary edges, a restrictive assumption in many settings.
For example, in social networks, edge weights may represent the strengths of
friendships, in gene expression networks, edge weights represent the extent to which
pairs of genes are co-expressed \citep{ZhaHor2005}, and in neuroimaging, edge weights represent the strength of connectivity
between brain regions of interest.
Substantial information is lost if these weights are truncated to binary;
see, for example, \cite{AicJacCla2015}.
The shared noise distribution is also a restrictive assumption,
since while we may reasonably expect 
that some population-level structure is shared across networks,
network-level variation is likely to be heterogeneous.
Specifically in neuroimaging, a lot of 
subject-level variation comes from head motion or deviations of the individual subject's brain from the common atlas, and there is no reason to expect these to be homogeneous.     There are a number of different pipelines
in use for reducing this type of noise in fMRI data
\citep[see, for example,][for a discussion]{CiricETAL2017},
but all introduce artifacts of one kind or another,
which we model here as potentially heterogeneous edge noise.

In this paper, we develop techniques for analyzing multiple networks without
these two assumptions.
In particular, we allow for weighted edges and heterogeneous
noise distributions, and study how to estimate the underlying population mean.
Under these conditions, the simple arithmetic mean of weighted
graphs is likely to be sub-optimal, as networks with higher noise levels
will contribute as much as those with less noise, and one would intuitively expect an estimate that
takes noise levels into account to perform better.
While there are a number of possible matrix means we might consider
\citep[see, for example, those described in][]{Bhatia2007}, 
in this paper, we focus on the case of weighted arithmetic means
of networks. That is, letting $\AA{1},\dots,\AA{N} \in \R^{n \times n}$
be the adjacency matrices of independent graphs on the same vertex set,
we are interested in estimators of the form
$\sum_{s=1}^N \what_s \AA{s}$, where $\{ \what_s \}_{s=1}^N$ are non-negative,
data-dependent weights summing up to $1$.


There have been a number of papers written in recent
years related to analysis of multiple vertex-aligned networks.
Motivated by brain imaging applications similar to those discussed here,
\cite{TanKetVogPriSus2016} 
considered the problem of estimating a low-rank population matrix,
 when graphs are drawn i.i.d.\ from a random dot product graph model
\citep{RDPGsurvey}, and investigated the asymptotic relative efficiency
of a low-rank approximation of the sample mean of these observed graphs
compared to the graph sample mean itself.   
\cite{LevAthTanLyzPri2017} considered the problem of analyzing multiple
vertex-aligned graphs,
and devised a method to compare geometric representations of graphs,
typically called embeddings in the literature,
for the purpose of exploratory data analysis and hypothesis testing,
focused particularly on comparing vertices across graphs.  
In a similar spirit, \cite{WanArrVogPri2017}
considered the problem of embedding multiple binary graphs whose adjacency
matrices (approximately) share eigenspaces,
while possibly differing in their eigenvalues.
All of these papers assume binary networks and identical noise
distributions on edges, in contrast to the setting we study here.  

\cite{EynKovBroGlaBro2015} developed a technique for analyzing multiple
manifolds by (approximately) simultaneously
diagonalizing a collection of graph Laplacians.
Like our work, the technique in \cite{EynKovBroGlaBro2015} aims
to recover spectral information shared across multiple observed graphs, 
but differs in that the authors work with weighted similarity graphs that arise
from data lying on a manifold, and
derive a perturbation bound instead of applying a specific
statistical or probabilistic model.
The authors also require that the population graph Laplacian
have a simple spectrum, an additional assumption we do not need. 

A few recent papers have considered the problem of analyzing
multiple networks generated from stochastic block models with the same
community structure, but possibly different 
connection probability matrices 
\citep{TanLuDhi2009,DonFroVanNef2014,HanXuAir2015,PauChe2016,BhaCha2018}. 
Similar approaches have been developed for time-varying networks,
where it is assumed that the connection probability may change over time,
but community structure is constant or only slowly varying
\citep{XuHer2014}.
Our setting is distinct from this line of work,
since we assume a general shared structure with varying distribution
of edge noise, and do not require edges to be binary.

\cite{TanTanVogPri2017} considered the problem of estimating shared low-rank
structure based on a sample of networks under the setting where individual
edges are drawn from contaminated distributions \citep{Huber1964}.
The paper compares the theoretical guarantees of estimates
based on edge-wise (non-robust) maximum likelihood estimation,
edge-wise robust maximum likelihood estimation \citep{FerYan2010},
and eigenvalue truncations of both.
The present work does not focus on robustness, and as a result is largely not  
comparable to \cite{TanTanVogPri2017}, although our procedures can be made robust in a similar fashion if desirable.


The remainder of this paper is organized as follows:
In Section~\ref{sec:problemsetup}, we give a formal description of the
problem and present necessary background.
Our main results are presented in
Section~\ref{sec:method}, with shows the optimality of a certain weighted network
average,  and Section~\ref{sec:estimate}, which proposes an algorithm to estimate these optimal weights from data.
Section~\ref{sec:perfect} briefly considers the application to community detection
and estimation in the multiple-network setting.
Section~\ref{sec:experiments} explores the effectiveness of our method on
simulated data as well as on a fMRI dataset from a study
comparing schizophrenic and healthy patients.
We conclude with a brief discussion in Section~\ref{sec:discussion}
summarizing our results and sketching directions for future work.

\section{Problem Setup and Notation}
\label{sec:problemsetup}
Throughout this paper, we assume that
we observe $N$ undirected graphs 
each on $n$ vertices with corresponding adjacency matrices,
$\AA{1},\AA{2},\dots,\AA{N} \in \R^{n \times n}$.
We will  refer to the $s$-th graph and its adjacency matrix $\AA{s}$ interchangeably.
Our key assumption is that the graphs are drawn independently with shared  expectation $\E \AA{s} = P \in \R^{n \times n}$, for all
$s \in [N] = \{1,2,\dots,N\}$.
Throughout, we will denote the rank of $P$ by $d = \rank P$.  All results will depend on $d$, but we do not make a low-rank assumption;  all our results are finite sample and thus valid for any $d \le n$.   
We assume that the vertices are aligned across
the graphs, in the sense that the $i$-th vertex in graph
$\AA{s}$ is identifiable with the $i$-th vertex in graph
$\AA{t}$ for all $i \in [n]$ and $s,t \in [N]$.
As a motivating example,
consider the case where the observed graphs are obtained from fMRI neuroimaging
of $N$ patients.
In such a setting, the $n$ vertices of each graph correspond to brain regions
of interest (ROIs), identified based on alignment to a common template
\citep[e.g.,][]{PowerETAL2011}.
This common alignment ensures that the $i$-th vertex in each of these
connectomes corresponds to the same anatomical region,
and thus this $i$-th vertex can be sensibly identified across graphs.
If the $N$ patients belong to a common population
(e.g., shared disease status), it is reasonable to expect that these networks
exhibit a shared structure.
In this work,
we take this shared structure to be a common expectation.
Note that for the case of $N =1$ a low-rank or another structural assumption would have to be made on $P$ in order to enable estimation, since otherwise we would only have one observation per parameter.
With $N > 1$ observed networks, this is not strictly necessary, but a low-rank or some other structural assumption would certainly enable better estimation, just like it does for the $N=1$ case.


Crucially for the purposes of this paper,
while the expectation $P$ is shared across the graphs,
we allow for each graph to exhibit different edge noise structure.
That is, for each $s \in [N]$ and $i,j \in [n]$,
$(\AA{s}-P)_{ij}$ has mean 0 but otherwise arbitrary
distribution $F = F_{s,ij}$, which may depend both on the subject $s$
and the specific edge $(i,j)$.
In the motivating neuroimaging application, this corresponds to the fact that  high-level anatomical and functional structure is likely common across patients,
but measurement noise is likely subject-specific, and edge heterogeneity may also result from individual differences.  
For simplicity of notation, we allow for self-loops, 
i.e., treat $\AA{s}_{ii}$ exactly the same way as the off-diagonal entries; 
self-loops
are generally a moot point for asymptotics,  since they make a negligible $O(n)$ contribution compared to the
$O(n^2)$ off-diagonal entries. 
Throughout, we assume that all parameters, including the number of networks
$N$, can depend on the number of vertices $n$,
though we mostly suppress this dependence for ease of reading.
We write $C$ for a generic positive constant,
not depending on $n$, whose value may change from one line to the next.   

Before proceeding, we pause to establish notation.
For an integer $k$, we write $[k]$ for the set  $\{1,2,\dots,k\}$.
For a vector $v$, we write $\| v \|$ for the Euclidean norm of $v$.
For a matrix $M$,  $\| M \|$ denotes the spectral norm, 
$\| M \|_F$ the Frobenius norm,
and $\| M \|_{\tti}$ the $(2,\infty)$ norm,
$\| M \|_{\tti} = \sup_{v:\|v\|=1} \| Mv \|_{\infty}$,
where $\| v \|_\infty = \max_i |v_i|$.
For a positive semidefinite matrix $M$,
we write $\kappa(M)$ for the ratio of the largest eigenvalue of $M$ to its largest {\em non-zero} eigenvalue.
We use standard Landau notation to $O(\cdot),o(\cdot),\Omega(\cdot),$ and $\omega(\cdot)$
to denote growth rates.
For example, $g(n) = O(f(n))$ as $n \rightarrow \infty$  means that $|g(n)| < Cf(n)$  for some constant $C$ and all $n > n_0$, $g(n) = \Omega(f(n))$ means $g(n) > Cf(n)$, and so on.  We use $\Otilde$ to denote growth rate up to log-factors, as in, for example, $n \log^2 n = \Otilde( n )$.    In a slight abuse of the term, 
we say that an event $E_n$ occurs {\em with high probability} (w.h.p.) if $\Pr[ E_n^c ] \le C n^{-(1+\epsilon)}$ for some constant $\epsilon > 0$. 
This definition allows us to state our results as finite-sample bounds while immediately implying asymptotic results of the form ``with probability $1$, event $B_n$ occurs for at most finitely many $n$'' by applying the Borel-Cantelli lemma.

%
\subsection{Motivating Examples} \label{subsec:motivating}

We now present a few examples that satisfy our model assumptions, in order of increasing generality.   In all cases, the question is how to optimally recover the underlying shared expectation $P$.  We begin with one of the simplest possible settings under our model.  

\begin{example}[Normal measurement errors with subject-specific variance]
\label{ex:gaussian}
~Assume that for each $s=1,2,\dots,N$, 
$\{ (\AA{s}-P)_{ij} : 1 \le i \le j \le n \}$ are independent
$\calN(0,\rho_s)$.
\end{example}

A weaker assumption on the edge measurement errors would be to replace
the specific distributional assumption that
$(\AA{s}-P)_{ij} \sim \calN(0,\rho_s)$
with a more general tail bound assumption,
such as sub-Gaussian or sub-gamma errors.
We refer the reader to Appendix~\ref{apx:taildefs} for the definition
and a few basic properties of sub-Gaussian and sub-gamma random variables,
or to \cite{BLM} for a more substantial discussion.

\begin{example}[Sub-gamma measurement errors with subject-specific parameter]
\label{ex:subgammasubject}
~Assume that for each $s = 1,2,\dots,N$, 
$\{ (\AA{s}-P)_{ij} : 1 \le i \le j \le n \}$ are independent, mean $0$,
sub-gamma with parameters $(\nu_s,b_s)$.

\end{example}


Note that the assumptions of Example \ref{ex:subgammasubject} do not require the edges to be identically distributed within a network.
We can further relax the sub-gamma assumption to allow for edge-specific
tail parameters rather than having a single tail parameter for each subject.

\begin{example}[Sub-gamma errors with subject- and edge-specific parameters]
\label{ex:subgamma}
~Assume that for all $s =1,2,\dots,N$, 
$\{ (\AA{s}-P)_{ij} : 1 \le i \le j \le n \}$ are independent, mean $0$, 
and for each $s \in [N]$ and $i, j \in [n]$,
 $(\AA{s}-P)_{ij}$ is sub-gamma with parameters $(\nu_{s,ij}, b_{s,ij})$. 
\end{example}

In all of these examples, there are several inference questions we
may wish to ask. In this work, we focus on 
\begin{enumerate}
\item Recovering the matrix $P$; 
\item Recovering $X \in \R^{n \times d}$ when  $P = X X^T$; 
\item Recovering community memberships when $P$ corresponds to a stochastic block model \citep{HolLasLei1983}.
\end{enumerate}
Given that the observed graphs
$\AA{1},\AA{2},\dots,\AA{N}$ differ in their noise structure,
the question arises as to how to combine these graphs to estimate $P$
(or $X$ or the community memberships).  
When the observed graphs are drawn i.i.d.\ from the same
distribution, the sample mean
$\Abar = N^{-1} \sum_{s=1}^N \AA{s}$ is a natural estimate of $P$,
and has been studied in this context in \cite{TanKetVogPriSus2016}
and in a related test statistic in \cite{CheJosLinZhoKol2020}.

However, in our more general setting, where individual networks
and/or edges may be more noisy than others,
 de-emphasizing noisier observations will lead to a more reliable estimate of $P$.
We pursue this by choosing data-dependent weights
$\{ \what_s \ge 0 : s=1,2,\dots,N \}$ with $\sum_{s=1}^N \what_s = 1$
so that the weighted mean estimate $\Ahat = \sum_{s=1}^N \what_s \AA{s}$
is optimal in some sense, or at least provably better than
the sample mean $\Abar$.

\begin{remark}[Positive semi-definite assumption on $P$]
{\em
In what follows,
we make the additional assumption that the expectation $P \in \R^{n \times n}$
is positive semi-definite, 
so that $P = X X^T$ for some $X \in \R^{n \times d}$, where $d \le n$.
This assumption can be removed
using the techniques in \cite{RubPriTanCap2017},
at the cost of added notational complexity.
Thus, for ease of exposition, we confine ourselves to the case where $P = X X^T$, bearing in mind that our results can be easily extended to any $P$.  
}
\end{remark}

\subsection{Recovering Spectral Structure} \label{subsec:lowrank}
Throughout this paper, we will begin from the standard first step in spectral clustering
\citep{RohChaYu2011}.
Following the terminology of \cite{SusTanFisPri2012},
we refer to this as adjacency spectral embedding (ASE). 
Given any adjacency matrix $A \in \R^{n \times n}$,
with a rank $d$ expectation $P$,
write $P = X X^T = \UP \SP \UP^T \in \R^{n \times n}$
where $\SP \in \R^{d \times d}$ is diagonal with entries given by the $d$
non-zero eigenvalues of $P$, and the $d$ corresponding orthonormal
eigenvectors are the columns of $\UP \in \R^{n \times d}$.
Since $P = X X^T = X Q (XQ)^T$ for orthogonal $Q \in \R^{d \times d}$,
the matrix $X$ is only identifiable up to an orthogonal rotation.
Thus, we take $X = \UP \SP^{1/2}$ without loss of generality.
One can view the rows of $X \in \R^{n \times d}$
as latent positions in $\R^d$ associated with the vertices,
with the expectation of an edge between nodes $i$ and $j$  given by the inner product $P_{ij} = X_i^T X_j$ of their latent positions, where $X_i \in \R^d$ is the $i$-th row of $X$.   This view motivates the random dot product graph model 
\citep{RDPGsurvey}, in which the latent positions are first drawn
i.i.d.\ from some underlying distribution on $\R^d$ and
edges are generated independently conditioned on the latent positions.
The natural estimate of the matrix $X = \UP \SP^{1/2}$ is then
\begin{equation*}
 \ASE(A,d) = \UA \SA^{1/2} \in \R^{n \times d},
\end{equation*}
where the eigenvalues in $\SA$ and eigenvectors in $\UA$
now come from $A$ rather than the unknown $P$.  
One can show that under appropriate conditions,
the ASE recovers the matrix $X$ up to an orthogonal rotation
that does not affect the estimate $\Phat = \UA \SA \UA^T \in \R^{n \times n}$.

While we do not focus on the random dot product graph model in this paper,
we make use of several generalizations of results initially
established for that model.
These results are summarized in Appendix~\ref{apx:genericbound}.
In general,
selecting the embedding dimension $d$ (i.e., estimating the rank of $P$)
is an interesting and challenging problem
\citep[see, e.g.,][]{FisSusTanVogPri2013,HanYanFan2019}, but it is not the
focus of the present work, and we assume throughout that $d$ is known.
When $P$ is rank $d$, one can show that under suitable growth conditions,
the gap between the $d$-th largest
eigenvalue and the $(d+1)$-th largest eigenvalue of $P$ grows with $n$,
and the same property holds for the adjacency matrix $A$.
As a result, it is not unreasonable to assume that one can
accurately determine the appropriate dimension $d$ when the number of vertices
$n$ is large.

\section{Methods and Theoretical Results}
\label{sec:method}
Generally speaking, we are interested in
estimators of the form $\Ahat = \sum_{s=1}^N \what_s \AA{s}$,
where $\{ \what_s \}_{s=1}^N$ are data-dependent nonnegative weights
summing to $1$. 
In particular, we study how well the rank-$d$ eigenvalue truncation of
$\Ahat$ approximates the true rank-$d$ expectation $P$.
We measure this either by bounding the difference $ \Ahat - P $ in some matrix norm or by proving that we can successfully recover the matrix
$X \in \R^{n \times d}$ from the estimate $\Ahat$.

In this section, we largely consider the weights $\{ w_s \}_{s=1}^N$ 
to be fixed, rather than data-dependent.
The special case of normally-distributed edge noise is illustrative,
as it suggests a certain choice of weights
in the more general case (see Theorem~\ref{thm:subexpXtilde}).
In Section~\ref{sec:estimate}, we will estimate 
these optimal weights and replace the fixed $\{ w_s \}_{s=1}^N$
with data-dependent estimates $\{ \what_s \}_{s=1}^N$.

\subsection{Normal edges with subject-specific variance}
\label{subsec:gaussian}

Return for a moment to Example~\ref{ex:gaussian}, in which
$\AA{s}_{i,j} \sim \calN(P_{i,j}, \rho_s)$, independent for 
all $1 \le s \le N$ and $1 \le i \le j \le n$.
This very simple setting suggests a choice for
the data-dependent weights $\{ \what_s \}_{s=1}^N$
when constructing the matrix $\Ahat$.
Indeed, a natural extension of the estimator suggested by this example 
will turn out to be the right choice in the more general settings
described in Section~\ref{sec:problemsetup}.
The following proposition follows immediately from
writing out the joint log-likelihood
of the $N$ observed networks and rearranging terms.
\begin{proposition} \label{prop:maxlik}
Suppose $\E \AA{s} = P \in \R^{n \times n}$
for all $s \in [N]$, and $P = X X^T$ for some $X \in \R^{n \times d}$.
If for all $s=1,2,\dots,N$ the edges
$\{ (\AA{s}-P)_{i,j} : 1 \le i \le j \le n \}$
are i.i.d.\ normal mean $0$ and known variance $\rho_s > 0$,
then the maximum likelihood 
estimate for $X \in \R^{n \times d}$ (up to an orthogonal rotation) is given by
$\ASE((\sum_{t=1}^N \rho_t^{-1})^{-1} \sum_{s=1}^N \AA{s}/\rho_s, d)$.
\end{proposition}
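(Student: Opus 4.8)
The plan is a one-line likelihood computation followed by an appeal to the Eckart--Young--Mirsky theorem. Write $\tau := \sum_{s=1}^N \rho_s^{-1}$ and let $\Abar_\rho := \tau^{-1}\sum_{s=1}^N \rho_s^{-1}\AA{s}$ be the inverse-variance-weighted mean appearing in the statement, so that $\E\Abar_\rho = P$. First I would write out the joint log-likelihood of the independent observations $\AA{s}_{ij}\sim\calN(P_{ij},\rho_s)$, $1\le i\le j\le n$, $s\in[N]$:
\[
\ell(P)\;=\;\sum_{s=1}^N\sum_{1\le i\le j\le n}\left[-\tfrac12\log(2\pi\rho_s)-\tfrac{1}{2\rho_s}\bigl(\AA{s}_{ij}-P_{ij}\bigr)^2\right].
\]
Dropping terms that do not involve $P$ and expanding the squares, the only $P$-dependent contribution is $\sum_{1\le i\le j\le n}\bigl[\bigl(\sum_s\rho_s^{-1}\AA{s}_{ij}\bigr)P_{ij}-\tfrac12\bigl(\sum_s\rho_s^{-1}\bigr)P_{ij}^2\bigr]=\tau\sum_{1\le i\le j\le n}\bigl[(\Abar_\rho)_{ij}P_{ij}-\tfrac12 P_{ij}^2\bigr]$, and completing the square in each $P_{ij}$ rewrites this as $\tfrac{\tau}{2}\sum_{1\le i\le j\le n}\bigl[(\Abar_\rho)_{ij}^2-\bigl((\Abar_\rho)_{ij}-P_{ij}\bigr)^2\bigr]$. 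Hence, up to an additive constant, $\ell(P)=\mathrm{const}-\tfrac{\tau}{2}\sum_{1\le i\le j\le n}\bigl((\Abar_\rho)_{ij}-P_{ij}\bigr)^2$, so that maximizing $\ell$ is equivalent to a (weighted) Frobenius-norm best rank-$d$ positive semidefinite approximation problem for $\Abar_\rho$.

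Next I would make the last identification precise. Since $\Abar_\rho$ and $P$ are symmetric, $\sum_{1\le i\le j\le n}\bigl((\Abar_\rho)_{ij}-P_{ij}\bigr)^2=\tfrac12\|\Abar_\rho-P\|_F^2+\tfrac12\sum_i\bigl((\Abar_\rho)_{ii}-P_{ii}\bigr)^2$; the diagonal term involves only $O(n)$ entries and is negligible relative to the Frobenius term, consistent with the earlier remark that self-loops do not matter asymptotically, so maximizing the likelihood over $\{P\succeq 0:\rank P\le d\}$ amounts (up to this negligible correction) to minimizing $\|\Abar_\rho-P\|_F$ over the same set. By the positive semidefinite form of the Eckart--Young--Mirsky theorem, and provided $\Abar_\rho$ has at least $d$ positive eigenvalues (which holds with high probability under the model, since the relevant spectral gap of $P$, and hence of $\Abar_\rho$, diverges), the minimizer is $\hat P=\sum_{k=1}^d\lambda_k u_k u_k^T$ with $(\lambda_k,u_k)_{k=1}^d$ the $d$ leading eigenpairs of $\Abar_\rho$, i.e.\ $\hat P=\Xhat\Xhat^T$ with $\Xhat=\ASE(\Abar_\rho,d)$. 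Because $X$ is identifiable only up to an orthogonal rotation and $\ASE(\Abar_\rho,d)$ is one representative of this maximizer, it is an MLE for $X$, which is exactly the claimed $\ASE\bigl((\sum_t\rho_t^{-1})^{-1}\sum_s\AA{s}/\rho_s,d\bigr)$.

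There is no substantive obstacle here; the statement really does ``follow immediately.'' The only items requiring a word of care are the two bookkeeping points just noted: (i) the diagonal/self-loop terms make the exact objective a mildly reweighted Frobenius norm rather than $\|\Abar_\rho-P\|_F^2$ on the nose, which is why the identification with the spectral truncation is exact only up to an $O(n)$-entry correction; and (ii) checking that the $d$ eigenvalues retained by $\ASE$ are positive, so that $\ASE(\Abar_\rho,d)$ genuinely realizes the best positive semidefinite rank-$d$ approximation. Both are addressed by the model's growth assumptions, and if one prefers a fully finite-sample statement they can simply be flagged as hypotheses.
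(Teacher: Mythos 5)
Your argument is correct and is exactly the route the paper takes: the paper offers no written proof beyond the remark that the result ``follows immediately from writing out the joint log-likelihood of the $N$ observed networks and rearranging terms,'' which is precisely your completion-of-the-square reduction to a rank-$d$ Frobenius approximation of the inverse-variance-weighted mean, finished by Eckart--Young--Mirsky. Your two caveats --- the doubled diagonal weight that comes from summing over $1 \le i \le j \le n$ rather than over all ordered pairs, and the need for the $d$ retained eigenvalues to be positive --- are genuine but minor bookkeeping points that the paper silently glosses over, and you handle them appropriately.
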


Motivated by this proposition, consider the plug-in estimator given by
\begin{equation} \label{eq:def:gauss_plugin}
\Xhat = \ASE\left( \left( \sum_{t=1}^N \rhohat_t^{-1} \right)^{-1}
	\sum_{s=1}^N \rhohat_s^{-1} \AA{s}, d \right),
\end{equation}
where $\rhohat_s$ is an estimate of the variance
$\rho_s$ of the edges in network $s$.
Given $P$, one could naturally use the MLE for $\rho_s$.
Since $P$ is unknown, we estimate it as
$\Phat^{(s)} = \Xhat^{(s)} \Xhat^{(s) T}$, where
$\Xhat^{(s)} = \ASE(\AA{s}, d)$,
and plug in for the MLE of $\rho_s$ to obtain,
for $s=1,2,\dots,N$, 
\begin{equation} \label{eq:def:rhohat}
\rhohat_s
= \sum_{1 \le i \le j \le n} \frac{ 2(\AA{s} - \Phat^{(s)})_{i,j}^2 }{ n(n+1) }.
\end{equation}

With $O(n^2)$ edges in each network, the estimates $\{ \rhohat_s \}_{s=1}^N$
converge to the true variances $\{ \rho_s \}_{s=1}^N$
in such a way that the estimation rate of the plug-in estimator
in Equation~\eqref{eq:def:gauss_plugin}
matches that of the maximum-likelihood
estimator in Proposition~\ref{prop:maxlik}.
The following proposition makes this claim precise.

\begin{proposition} \label{prop:gauss:XhatXopt}
Let $\AA{1},\AA{2},\dots,\AA{N}$ be independent adjacency matrices
with common expectation $\E \AA{s} = P = X X^T$, where $X \in \R^{n \times d}$,
and suppose that for each $s \in [N]$,
$\{ (\AA{s}-P)_{i,j} : 1 \le i \le j \le n \}$ 
are independent $\calN(0, \rho_s)$.
Let $\Xopt \in \R^{n \times d}$ denote the maximum likelihood estimator
under the assumption of known variances, 
as described in Proposition~\ref{prop:maxlik}.

Suppose that
\begin{equation} \label{eq:variancesum}
\sum_{s=1}^N \rho_s^{-1}
= \omega\left( \frac{n \log^2 n}{\lambda_d^2(P)} \right),
\end{equation}
Then for all suitably large $n$, there exists orthogonal matrix
$\Vopt \in \R^{d \times d}$ such that
\begin{equation*}
\| \Xopt - X \Vopt \|_{\tti}
\le
\frac{ Cd }{ \lambda_d^{1/2}(P) }
        \left( \sum_{s=1}^N \rho_s^{-1} \right)^{-1/2}
        + \frac{ Cd \kappa(P) n }{ \lambda_d^{3/2}(P) }
        \left( \sum_{s=1}^N \rho_s^{-1} \right)^{-1}.
\end{equation*}
Further, let $\Xhat \in \R^{n \times d}$ denote the estimator defined in
Equation~\eqref{eq:def:gauss_plugin}.
For all suitably large $n$, there exists orthogonal matrix
$V \in \R^{d \times d}$ such that with probability $1-Cn^{-2}$,
\begin{equation*}
\| \Xhat - X V \|_{\tti}
\le \frac{ Cd }{ \lambda_d^{1/2}(P) }
        \left( \sum_{s=1}^N \rho_s^{-1} \right)^{-1/2}
        + \frac{ Cd \kappa(P) n }{ \lambda_d^{3/2}(P) }
        \left( \sum_{s=1}^N \rho_s^{-1} \right)^{-1}.
\end{equation*}
\end{proposition}
This proposition is a special case of
Theorem~\ref{thm:subgamma:XhatXopt} in Section~\ref{sec:estimate},
and thus we delay its proof until then.

\subsection{Sub-gamma edges}
\label{subsec:subgamma}

In settings like those in
Examples~\ref{ex:subgammasubject} and~\ref{ex:subgamma},
where there are no longer parameters controlling the noise distribution,
we must resort to more general concentration inequalities.
Our main tool in this setting is a generalization
of a bound on the error in recovering
$X = \UP \SP^{1/2} \in \R^{n \times d}$.
Given a single adjacency matrix $A \in \R^{n \times n}$
with $\E A = P = X X^T \in \R^{n \times n}$,
a natural estimate of $X \in \R^{n \times d}$ is $\ASE(A,d)$.
The following lemma bounds the difference between
this estimate and an orthogonal rotation ($V$ in the result below)
of $X = \UP \SP^{1/2}$.
This bound makes no use of a particular error structure,
but instead bounds the difference in terms of
three different norms of $A-P$.
This error term can then be bounded
using standard concentration inequalities,
which we will do below in the proof of Theorem~\ref{thm:subexpXtilde}.

\begin{lemma} \label{lem:genericbound}
Let $P = X X^T = \UP \SP \UP^T \in \R^{n \times n}$
be a rank-$d$ matrix with non-zero eigenvalues
$\lambda_1(P) \ge \lambda_2(P) \ge \cdots \ge \lambda_d(P) > 0$.
Let $A \in \R^{n \times n}$ be a random symmetric matrix
for which there exists a constant $c_0 \in [0,1)$
such that with probability $p_0$,
\begin{equation} \label{eq:assum:specbound}
\| A - P \| < c_0 \lambda_d(P)
\end{equation}
for all suitably large $n$.
Letting $\Xhat = \ASE( A, d )$,
for all suitably large $n$,
there exists a random orthogonal matrix $V = V_n \in \R^{d \times d}$
such that with probability $p_0$
\begin{equation*} \begin{aligned}
\| \Xhat &- X V \|_{\tti} \\
&\le
  \frac{ \| (A-P)\UP \|_{\tti} }{ \lambda_d^{1/2}(P) }
  + \frac{ C \| \UP^T(A-P)\UP \|_F }{ \lambda_d^{1/2}(P) }
   + \frac{ C d \| A-P \|^2 \kappa(P) }{ \lambda_d^{3/2}(P) } .
  \end{aligned} \end{equation*}
\end{lemma}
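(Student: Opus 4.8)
The plan is to prove the inequality deterministically on the event $\mathcal{E} = \{\|A - P\| < c_0\lambda_d(P)\}$; since $\Pr(\mathcal{E}) \ge p_0$ by hypothesis, the stated probability $p_0$ is then automatic, and --- crucially --- nothing about $A$ beyond membership in $\mathcal{E}$ will be used, which is what makes the lemma model-free. First I would record the spectral consequences of $\mathcal{E}$. By Weyl's inequality, $\lambda_d(A) \ge (1 - c_0)\lambda_d(P) > 0$ while $|\lambda_{d+1}(A)| \le \|A - P\| < c_0\lambda_d(P)$, so the top $d$ eigenvalues of $A$ are isolated from the rest; hence $\SA$ is invertible with $\|\SA^{-1}\| \le [(1-c_0)\lambda_d(P)]^{-1}$ and $\|\SA\| \le (1+c_0)\lambda_1(P)$. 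Davis--Kahan's $\sin\Theta$ theorem gives $\|\UA\UA^T - \UP\UP^T\| \le C\|A - P\|/\lambda_d(P)$, and, taking $V = V_n$ to be the orthogonal polar factor of $\UP^T\UA$ (the Procrustes-optimal alignment), also $\|\UA - \UP V\| \le C\|A - P\|/\lambda_d(P)$. Finally, standard perturbation bounds for the matrix square root and its inverse give $\|\SA^{1/2} - V^T\SP^{1/2}V\| \le C\|A - P\|\lambda_d(P)^{-1/2}$ and $\|\SA^{-1/2} - V^T\SP^{-1/2}V\| \le C\|A - P\|\lambda_d(P)^{-3/2}$.

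The core of the argument is an algebraic decomposition of $\Xhat - XV$. Using the eigen-relation $\UA\SA = A\UA$, write $\Xhat = \UA\SA^{1/2} = A\UA\SA^{-1/2}$, and substitute $A = P + (A - P)$, $P = \UP\SP\UP^T$, $\UA = \UP V + (\UA - \UP V)$, and $\SA^{-1/2} = V^T\SP^{-1/2}V + \Xi$ with $\|\Xi\| \le C\|A - P\|\lambda_d(P)^{-3/2}$. Collecting terms, $\Xhat - XV$ splits into: (i) the leading term $(A - P)\UP\SP^{-1/2}V$; (ii) a cluster of first-order terms of the form $\UP(\,\cdot\,)$ coming from the corrections to the eigenvectors, eigenvalues, and their square roots; and (iii) a remainder in which every summand contains at least two factors drawn from $\{A - P,\ \UA - \UP V,\ \Xi\}$. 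To control $\|\UA - \UP V\|_{\tti}$ --- needed because it is one of those ``small'' factors --- I would rearrange the same eigen-relation into $\UA - \UP V = (A - P)\UP(\,\cdot\,) + \UP(\,\cdot\,) + (A - P)(\UA - \UP V)\SA^{-1}$, take $\|\cdot\|_{\tti}$, use $\|\UP M\|_{\tti} \le \|\UP\|_{\tti}\|M\| \le \|M\|$ (as $\|\UP\|_{\tti} \le 1$) together with $\|(A - P)(\UA - \UP V)\SA^{-1}\|_{\tti} \le c_0\|\UA - \UP V\|_{\tti}$, and solve the resulting self-bounding inequality (legitimate since $c_0 < 1$), obtaining $\|\UA - \UP V\|_{\tti} \le C\lambda_d(P)^{-1}\|(A - P)\UP\|_{\tti} + (\text{terms that will feed only the third summand below})$.

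It then remains to bound the three pieces in $\|\cdot\|_{\tti}$. For (i), $\|(A - P)\UP\SP^{-1/2}V\|_{\tti} \le \|(A - P)\UP\|_{\tti}\,\|\SP^{-1/2}\| = \|(A - P)\UP\|_{\tti}\,\lambda_d(P)^{-1/2}$, the first claimed term. For (ii), after combining the contributing terms the ``large'' parts of the several corrections cancel and what survives is of the form $\UP M$ with $\|M\|_F \le C\|\UP^T(A - P)\UP\|_F\,\lambda_d(P)^{-1/2}$; since $\|\UP M\|_{\tti} \le \|\UP\|_{\tti}\|M\|_F \le \|M\|_F$, this yields the second claimed term. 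For (iii), each summand is a product of $O(1)$ matrices whose operator norms are controlled by the first paragraph --- factors of $\|A - P\|$, $\|\UA - \UP V\| \le C\|A-P\|\lambda_d(P)^{-1}$, $\|\Xi\| \le C\|A-P\|\lambda_d(P)^{-3/2}$, $\|\SA^{1/2}\| \le C\lambda_1(P)^{1/2}$, $\|\SP^{-1/2}\| = \lambda_d(P)^{-1/2}$, and the like --- so, converting the $d \times d$ Frobenius norms that appear into operator norms (each conversion costing a $\sqrt{d}$, needed at most twice), the remainder is at most $Cd\|A - P\|^2\kappa(P)\lambda_d(P)^{-3/2}$, the third claimed term. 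Summing (i)--(iii) gives the lemma.

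The main obstacle is step (ii). Taken one at a time, several of the first-order correction terms are of order $\|A - P\|\lambda_d(P)^{-1/2}$ (up to powers of $\kappa(P)$), which is first order in $\|A - P\|$ and hence far too large, since the bound can only tolerate a second-order residual there. One must expand the corrections to $\UA$, $\SA$, and $\SA^{1/2}$ precisely enough to see that their leading parts are skew-symmetric and cancel in the specific combination that enters $\Xhat - XV$, leaving at first order only the symmetric piece governed by the $d \times d$ compression $\UP^T(A - P)\UP$ --- which is exactly why the leading Frobenius term of the bound involves $\UP^T(A - P)\UP$ rather than the full $(A - P)\UP$. The secondary point requiring care is to verify that the self-bounding inequality for $\|\UA - \UP V\|_{\tti}$ and every norm estimate above use nothing about $A$ except $\mathcal{E}$; granting this, the bound applies verbatim whenever $\|A - P\|$, $\|(A - P)\UP\|_{\tti}$ and $\|\UP^T(A - P)\UP\|_F$ are controlled, which is how it is invoked for the heterogeneous sub-gamma model in the proof of Theorem~\ref{thm:subexpXhat}.
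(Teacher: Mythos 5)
Your overall architecture matches the paper's: argue deterministically on the event $\{\|A-P\| < c_0\lambda_d(P)\}$, align with the orthogonal polar factor $V$ of $\UP^T\UA$, expand $\Xhat = A\UA\SA^{-1/2}$ around $P$, $\UP V$ and $\SP^{-1/2}$, and sort the resulting terms into a leading piece $(A-P)\UP\SP^{-1/2}V$, a compression piece, and a second-order remainder; the first and third pieces are handled essentially as you describe. But the step you yourself single out as the main obstacle --- showing that the first-order corrections involving $\UA - \UP V$ and $\SA^{1/2} - V^T\SP^{1/2}V$ contribute only $C\|\UP^T(A-P)\UP\|_F/\lambda_d^{1/2}(P)$ rather than something of order $\|A-P\|/\lambda_d^{1/2}(P)$ --- is the entire content of the lemma, and your proposal does not establish it. Asserting that ``the leading parts are skew-symmetric and cancel'' is a plan, not a proof; moreover, carrying that plan out via first-order eigenvector perturbation theory would require separating the top $d$ eigenvalues of $P$ from one another, which the lemma does not assume (only the gap between $\lambda_d(P)$ and $0$ is controlled, so the top-$d$ eigenspace can be arbitrarily degenerate).

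The paper closes this gap with two facts absent from your toolkit. First (Proposition~\ref{prop:innerprods}), the polar factor $V$ of $\UP^T\UA$ satisfies $\|\UP^T\UA - V\|_F \le d\|A-P\|^2/\lambda_d^2(P)$ --- a genuinely \emph{second-order} closeness, coming from the fact that the singular values of $\UP^T\UA$ are cosines of principal angles and hence within $O(\|A-P\|^2/\lambda_d^2(P))$ of $1$ by Davis--Kahan. Second, the exact intertwining identity $\UP^T\UA\,\SA - \SP\,\UP^T\UA = \UP^T(A-P)\UA$ (from $A\UA = \UA\SA$ and $\UP^T P = \SP\UP^T$), combined with the first fact and the Davis--Kahan control of $\UA - \UP\UP^T\UA$, yields
\begin{equation*}
\| V\SA - \SP V\|_F \le \|\UP^T(A-P)\UP\|_F + \frac{Cd\|A-P\|^2\kappa(P)}{\lambda_d(P)},
\end{equation*}
and hence the analogous bound for $\|V\SA^{1/2} - \SP^{1/2}V\|_F$ after dividing by $\lambda_d^{1/2}(P)$ (Proposition~\ref{prop:Vapproxcomm}). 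It is this approximate-commutation bound --- not the generic $O(\|A-P\|\lambda_d^{-1/2}(P))$ perturbation bound for the matrix square root that your first paragraph supplies --- that must be fed into the term $\UP\bigl(V\SA^{1/2} - \SP^{1/2}V\bigr)$ of the decomposition. Without it, your argument leaves behind a term of order $\|A-P\|/\lambda_d^{1/2}(P)$ that the stated conclusion does not contain.
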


This lemma generalizes
Theorem 18 in \cite{LyzTanAthParPri2017} and
Lemma 1 in \cite{LevAthTanLyzPri2017}.
Details are included in Section~\ref{apx:genericbound}
of the Appendix.
We note that recent work
\citep[][Theorem 4.2]{CapTanPri2019}
established a $(\tti)$-norm bound for eigenvector recovery,
a problem related to, but fundamentally different from,
the problem of recovering $X \in \R^{n \times d}$ considered in
Lemma~\ref{lem:genericbound}.

In order to apply Lemma~\ref{lem:genericbound} to the
random matrix $\Atilde = \sum_{s=1}^N w_s \AA{s}$,
we need to ensure that the spectral condition in
Equation~\eqref{eq:assum:specbound} holds.
Toward that end,
the following lemma bounds the spectral error between
$\Atilde$ and $P$ in terms of the weights and the sub-gamma parameters.
We begin by considering the case where the weights
$\{ w_s \}_{s=1}^N$ are {\em not} data dependent.
In Section~\ref{sec:estimate}, we will consider the case where
the weights are a function of the networks $\AA{1},\AA{2},\dots,\AA{N}$.

\begin{lemma} \label{lem:mxspecbound}
Suppose that $\AA{1},\AA{2},\dots,\AA{N}$ are independent symmetric adjacency
matrices with shared expectation $\E \AA{s} = P \in \R^{n \times n}$,
and suppose that 
$\{ (\AA{s}-P)_{i,j} : s \in [N], 1 \le i \le j \le n\}$
are independent sub-gamma random variables
with parameters $(\nu_{s,i,j}, b_{s,i,j})$.
Let $w_1,w_2,\dots,w_N \ge 0$ be non-random weights
with $\sum_{s=1}^N w_s = 1$.
Then with probability at least $1 - Cn^{-2}$,
\begin{equation*}
\left\| \Atilde - P \right\| \le \frac{ 15 \sqrt{2\mxbernsymbol^2} }{ 2 }\log n ,
\end{equation*}
where
\begin{equation*}
\mxbernsymbol^2
= 2 \max_{i\in[n]} \sum_{s=1}^N \sum_{j=1}^n
        w_s^2 (\sqrt{2 \nu_{s,i,j}} + 2b_{s,i,j})^2.
\end{equation*}
\end{lemma}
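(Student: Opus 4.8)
The plan is to express $\Atilde - P = \sum_{s=1}^N w_s (\AA{s} - P)$ as a single sum of independent, mean-zero symmetric random matrices and then apply a matrix concentration inequality of Bernstein type. Concretely, write
\[
\Atilde - P = \sum_{s=1}^N \sum_{1 \le i \le j \le n} w_s (\AA{s}-P)_{i,j} \, E_{i,j},
\]
where $E_{i,j} = e_i e_j^T + e_j e_i^T$ for $i<j$ and $E_{i,i} = e_i e_i^T$, so that each summand is a symmetric rank-$\le 2$ matrix scaled by a scalar sub-gamma variable. Each of these $O(Nn^2)$ summands is independent and mean zero, so the natural tool is a matrix Bernstein inequality for sums of independent sub-exponential (sub-gamma) symmetric matrices; such an inequality states that the spectral norm of the sum concentrates at scale $\sigma \sqrt{\log n}$, where $\sigma^2$ is the appropriate matrix variance proxy, up to a lower-order term controlled by the per-summand sub-gamma scale. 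The factor $\log n$ (rather than $\sqrt{\log n}$) and the mixed form of $\mxbernsymbol^2$ suggest the version actually used is a matrix Bernstein bound stated in terms of a \emph{row-wise} variance, i.e. the kind where one controls $\max_i \sum_{s,j} w_s^2 \operatorname{Var}((\AA{s}-P)_{i,j})$ rather than the operator norm of the full second-moment matrix; this is exactly why $\mxbernsymbol^2$ has the form $2\max_{i\in[n]} \sum_{s=1}^N \sum_{j=1}^n w_s^2(\sqrt{2\nu_{s,i,j}} + 2b_{s,i,j})^2$.

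The key steps, in order, are: (i) recall from Appendix~\ref{apx:taildefs} the standard moment/MGF consequences of the sub-gamma$(\nu,b)$ assumption, in particular that a sub-gamma$(\nu,b)$ variable has an Orlicz-$\psi_1$-type norm comparable to $\sqrt{2\nu} + 2b$ and satisfies the corresponding moment growth $\E|\xi|^k \le k!\,(\sqrt{2\nu}+2b)^k/2$ or similar; (ii) compute the matrix variance statistic for the sum. Because the rows of the matrix sum decouple in the relevant way, for a fixed row index $i$ the contribution to $\|\E (\Atilde-P)(\Atilde-P)^T\|$ coming from that row is $\sum_{s=1}^N \sum_{j=1}^n w_s^2 \operatorname{Var}\big((\AA{s}-P)_{i,j}\big)$, and by the sub-gamma property $\operatorname{Var}\big((\AA{s}-P)_{i,j}\big)\le (\sqrt{2\nu_{s,i,j}} + 2b_{s,i,j})^2$ (up to constants absorbed into the definition, which is why the factor $2$ appears out front of the $\max$). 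Taking the worst row gives $\sigma^2 \le \mxbernsymbol^2$; (iii) invoke the matrix Bernstein inequality (e.g. a Tropp-style bound, or the sub-exponential matrix Bernstein in \cite{BLM}-adjacent references) to conclude that for a suitable absolute constant, $\|\Atilde-P\| \le c\,\mxbernsymbol\sqrt{\log n}$ with probability $1 - Cn^{-2}$, provided the sub-exponential scale term is dominated by the variance term at this probability level; (iv) bookkeep the constant: the $\tfrac{15\sqrt{2}}{2}$ factor comes from choosing the deviation level so that the tail is $n^{-2}$ and from the explicit constants in the matrix Bernstein statement being used, together with the factor-of-$2$ already built into $\mxbernsymbol^2$.

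The main obstacle is step (ii)–(iii): getting the variance proxy into exactly the stated row-wise form $\max_i \sum_{s,j} w_s^2(\cdots)^2$ rather than a looser operator-norm quantity, and verifying that with this choice the sub-exponential correction term in the matrix Bernstein bound is genuinely lower order (so that the clean $\log n$ scaling survives). This requires being careful about which matrix concentration inequality is invoked — in particular one whose variance parameter is the maximum row sum of variances, which holds here because $\Atilde - P$ is symmetric and $\E(\Atilde-P)^2$ is diagonally dominated in the right sense — and about the precise sub-gamma moment bounds used to pass from the parameters $(\nu_{s,i,j}, b_{s,i,j})$ to the quantities $(\sqrt{2\nu_{s,i,j}} + 2b_{s,i,j})$. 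Assuming the appropriate matrix Bernstein inequality is available (cited from the appendix or \cite{BLM}), the rest is routine constant-tracking. I would also note in passing that self-loops ($i=j$ terms) contribute only $O(n)$ summands of rank one and do not affect the leading-order bound, consistent with the remark in Section~\ref{sec:problemsetup}.
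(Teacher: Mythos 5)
Your proposal is correct and takes essentially the same route as the paper: the identical decomposition of $\Atilde - P$ into elementary symmetric matrices $e_ie_j^T + e_je_i^T$ (and $e_ie_i^T$ on the diagonal) scaled by the sub-gamma edge variables, followed by Tropp's matrix Bernstein inequality under a Bernstein moment condition, with the variance proxy reducing to the stated maximum row sum because the second-moment matrices are diagonal (no special ``row-wise'' variant of the inequality is required). The obstacle you flag in step (iii)---that the sub-exponential scale term must not spoil the $\mxbernsymbol\log n$ rate---is resolved in the paper by the one-line observation that the per-summand scale $R$ is a maximum of terms whose squares are summed inside $\mxbernsymbol^2$, hence $R \le C\mxbernsymbol$, after which the deviation level $t \asymp (\mxbernsymbol + R)\log n$ collapses to a multiple of $\mxbernsymbol\log n$.
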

Lemma~\ref{lem:mxspecbound} follows from a standard matrix Bernstein inequality
\citep{Tropp2012}. Details are provided in Appendix~\ref{apx:subexpXtilde}.

While bounds for recovering $X$ are also possible under the setting of
Example~\ref{ex:subgamma},
in which $(\AA{s} - P)_{i,j}$ is $(\nu_{s,i,j},b_{s,i,j})$-sub-gamma
for each $s \in [N], i,j \in [n]$,
the bounds are comparatively complicated functions of these parameters.
For simplicity, we state the following theorem for the case where
the edges in the $s$-th network are independent $(\nu_s,b_s)$-sub-gamma
random variables.

\begin{theorem} \label{thm:subexpXtilde} 
Under the setting of Lemma~\ref{lem:mxspecbound}, with the additional
condition that $P = X X^T$ for some $X \in \R^{n \times d}$
and $(\nu_{s,i,j},b_{s,i,j}) = (\nu_s,b_s)$ for all $i,j \in [n]$, let
$\Xtilde = \ASE(\Atilde,d)$.
Suppose that the weights $\{w_s\}_{s=1}^N$ and sub-gamma parameters
$\{ (\nu_s,b_s) \}_{s=1}^N$ are such that
\begin{equation} \label{eq:paragrowth}
\sum_{s=1}^N w_s^2(\nu_s + b_s^2)
= o\left( \frac{\lambda_d^2(P)}{n \log^2 n} \right)
\end{equation}
Then with probability $1-Cn^{-2}$
there exists an orthogonal  matrix $V \in \R^{d \times d}$ such that
\begin{equation*} \label{eq:betadef}
\begin{aligned}
\| \Xtilde &- X V \|_{\tti} \\
&\le \frac{ Cd }{ \lambda_d^{1/2}(P) }
        \left( \sum_{s=1}^N w_s^2(\nu_s + b_s^2) \right)^{1/2} \log n
+ \frac{ Cd n \kappa(P) }{ \lambda_d^{3/2}(P) }
	\left( \sum_{s=1}^N w_s^2(\nu_s + b_s^2) \right) \log^2 n.
\end{aligned}
\end{equation*}
\end{theorem}
This theorem follows from applying Lemma~\ref{lem:genericbound}
with $A = \Atilde$, using Lemma~\ref{lem:mxspecbound}
and Equation~\eqref{eq:paragrowth} to ensure that
Equation~\eqref{eq:assum:specbound} holds,
and applying standard concentration inequalities to control the resulting
bound on $\| \Xtilde - X V \|_{\tti}$.
Details can be found in Appendix~\ref{apx:subexpXtilde}.

\begin{remark} \label{rem:weights} {\em
To illustrate the consequences of Theorem~\ref{thm:subexpXtilde},
let us consider two different settings for the sub-gamma parameters.
First, consider the case when all
$\{ \nu_s + b_s^2 \}_{s=1}^N$ are of constant order.
Setting $w_s = N^{-1}$ for $s=1,2,\dots,N$ then yields
$\sum_s w_s^2(\nu_s + b_s^2) = O( N^{-1} )$,
and Theorem~\ref{thm:subexpXtilde}
shows that, unsurprisingly, network averaging
yields faster recovery in estimating $X$ compared with the single-network
setting, at least in the case where all of the networks have
comparable levels of edge uncertainty.

Contrast this case with the setting where
$(\nu_1+b_1^2) = N^2$
and $(\nu_s + b_s^2) = 1$ for $s=2,3,\dots,N$.
Applying Theorem~\ref{thm:subexpXtilde},
if we take $w_s = N^{-1}$ for all $s \in [N]$,
our estimation accuracy for $X$ is controlled by
\begin{equation} \label{eq:ratecompare:const}
\sum_s w_s^2(\nu_s + b_s^2)
= \frac{1}{N^2}\left( N^2 + N - 1\right)
= O(1).
\end{equation}
On the other hand, setting the weights to 
\begin{equation*}
w_1 = \frac{ N^{-2} }{ N^{-2} + N-1 }
~~~\text{ and }~~~
w_s = \frac{ 1 }{ N^{-2} + N-1 }~~~\text{ for } s=2,3,\dots,N,
\end{equation*}
implies that the estimation rate in Theorem~\ref{thm:subexpXtilde} is controlled by
\begin{equation*} 
w_1^2(\nu_1 + b_1^2) + \sum_{s=2}^N w_s^2(\nu_s+b_s^2)
= \frac{ 1 }{ N^{-2} + N - 1 } = O(N^{-1}).
\end{equation*}
Comparing this rate with the rate in Equation~\eqref{eq:ratecompare:const},
we see that weighted network averaging can yield qualitatively better
estimation in the setting where one or more networks have much higher
uncertainty in their edge measurements.
}
\end{remark}

\subsection{Selecting network weights} 
\label{subsec:weightselect}

When $\AA{s}$ has sub-gamma edge noise
with parameters $(\nu_s,b_s)$ common for all edges,
the bound in Theorem~\ref{thm:subexpXtilde} is a monotone function
of the quantity $\sum_s w_s^2(\nu_s + b_s^2)$.
This suggests that we choose
the weights $\{ w_s \}_{s=1}^N$ so as to minimize this quantity,
which is achieved by taking
\begin{equation} \label{eq:def:subgamma:wopt}
w_s = \wopt_s =
 \frac{ (\nu_s + b_s^2)^{-1} }{ \sum_{t=1}^N (\nu_t + b_t^2)^{-1} }
\end{equation}
for each $s \in [N]$.
Applying Lemma~\ref{lem:mxspecbound} to $\Aopt = \sum_s \wopt_s \AA{s}$,
and using the assumption that $(\nu_{s,i,j}, b_{s,i,j}) = (\nu_s,b_s)$
for all $s \in [N]$ and $i,j \in [n]$,
we conclude that with high probability,
\begin{equation} \label{eq:Aopt:rate}
\left\| \Aopt - P \right\| 
\le C \sqrt{ \frac{ n }{ \sum_s \nub{s}^{-1} } } \log n,
\end{equation}
where we have used the fact that
\begin{equation*}
\nu_s + b_s^2 \le (\sqrt{\nu_s} + b_s)^2 \le 2(\nu_s + b_s^2).
\end{equation*}

Perhaps surprisingly, when the edges are normally distributed about their
expectations, this selection of weights $\{ \wopt_s \}_{s=1}^N$
yields the minimax optimal rate for recovering $P$ in spectral norm.
When $(\AA{s}-P)_{i,j} \sim \calN(0,\rho_s)$ for all $1 \le i \le j \le n$,
we can take the sub-gamma parameters to be $(\nu_s,b_s) = (\rho_s,0)$
for all $s \in [N]$, and the bound in Equation~\eqref{eq:Aopt:rate} becomes
\begin{equation*} 
\left\| \Aopt - P \right\|
\le C \sqrt{ \frac{ n }{ \sum_s \rho_s^{-1} } } \log n,
\end{equation*}
and this matches the minimax rate, as the following result shows.
A proof can be found in the Appendix.

\begin{theorem} \label{thm:minimax}
Let $\AA{1},\AA{2},\dots,\AA{N} \in \R^{n \times n}$ be independent
symmetric adjacency matrices, with
$\{ (\AA{s}_{ij} - P_{ij}) : 1 \le i \le j \le n \}$ drawn
i.i.d.\ from a normal with mean $0$ and variance $\rho_s$
for each $s = 1,2,\dots,N$.
Then, letting $\symmetricmxs_n = \{ P \in \R^{n \times n} : P = P^T\}$,
for all $n \ge 2$, 
\begin{equation*}
\inf_{\Phat} \sup_{P \in \symmetricmxs_n}
        \E \| \Phat - P \|
\ge C \sqrt{n} \left( \sum_{s=1}^N \rho_s^{-1} \right)^{-1/2},
\end{equation*}
where the infimum is over all estimators $\Phat$ of $P$.
\end{theorem}

\section{Estimating the Sub-gamma Parameters}
\label{sec:estimate}
In the setting where each network $\AA{s}$ has edges with shared sub-gamma
parameter $(\nu_s,b_s)$,
the results in Sections~\ref{subsec:subgamma} and~\ref{subsec:weightselect}
suggested choosing our network weights according to
Equation~\eqref{eq:def:subgamma:wopt}.
Of course, in practice, we do not know the sub-gamma parameters
$\{(\nu_s,b_s)\}_{s=1}^N$
and hence we must estimate them in order to obtain estimates of the
optimal weights. 
Since $(\nu_s + b_s^2)$ is (up to a constant factor) an upper bound on the
variances of the $\{ (\AA{s} - P)_{ij} :1 \le i \le j \le n \}$,
a natural estimate of $\wopt_s$ is
\begin{equation} \label{eq:def:subgamma:what}
\what_s = \frac{ \rhohat_s^{-1} }{ \sum_{t=1}^N \rhohat_t^{-1} },
\end{equation}
where, letting $\Phat^{(s)} \in \R^{n \times n}$
be the rank-$d$ truncation of $\AA{s}$ for $s=1,2,\dots,N$,
\begin{equation} \label{eq:def:subgamma:rhohat}
\rhohat_s
= \frac{ \sum_{1 \le i \le j \le n}
	\left(\AA{s}_{ij} - \Phat^{(s)}_{ij} \right)^2 }
	{16 n(n+1) }.
\end{equation}
Comparison with Equation~\eqref{eq:def:rhohat} reveals that this is,
in essence, the same estimation procedure that we derived in
Section~\ref{subsec:gaussian}, extended to the case of sub-gamma edges.
The factor of 16 in the denominator comes from replacing the
equality $\E(\AA{s} - P)_{ij}^2 = \rho_s$
with the sub-gamma moment bound \citep[][Chapter 2, Theorem 2.3]{BLM}
\begin{equation} \label{eq:BLM:varbound}
\E(\AA{s} - P)_{ij}^2 \le 8\nu_s + 32b_s^2 \le 32(\nu_s + b_s^2).
\end{equation}
Just as in Section~\ref{subsec:gaussian},
the estimated weights $\{ \what_s \}_{s=1}^N$ are such that
the plug-in estimate $\ASE( \sum_s \what_s \AA{s}, d)$ 
recovers the true matrix $X \in \R^{n \times d}$
(up to orthogonal nonidentifiability) at the same rate as we
would obtain if we knew the true sub-gamma parameters.
\begin{theorem} \label{thm:subgamma:XhatXopt}
Suppose that $\AA{1},\AA{2},\dots,\AA{N}$ are independent symmetric adjacency
matrices with shared expectation $\E \AA{s} = P = X X^T \in \R^{n \times n}$,
where $X \in \R^{n \times d}$.
Suppose further that for each $s \in [N]$,
$\{ (\AA{s}-P)_{i,j} : 1 \le i \le j \le n\}$
are independent sub-gamma random variables
with parameters $(\nu_s,b_s)$.
Let $\{ \wopt_s \}_{s=1}^N$ and $\{ \what_s \}_{s=1}^N$
be the weights defined in Equations~\eqref{eq:def:subgamma:wopt}
and~\eqref{eq:def:subgamma:what}, respectively,
and define the estimators
\begin{equation*}
\Xhat = \ASE\left( \sum_{s=1}^N \what_s \AA{s}, d \right),
~~~
\Xopt = \ASE\left( \sum_{s=1}^N \wopt_s \AA{s}, d \right).
\end{equation*}
Suppose that the parameters $n,d$ and $N$ grow in such a way
that $d/n \le 1$ for all suitably large $n$,
and, some positive integer $k$,
\begin{equation} \label{eq:updatedgrowth}
 \frac{ n^{k-2} d^k \left( \log N + \log n \right)^{4k} }{N} = \Omega( 1 ).
\end{equation}
Suppose further that the sub-gamma parameters
$\{ (\nu_s,b_s) \}_{s=1}^N$ are such that
\begin{equation} \label{eq:subgammagrowth}
\left( \frac{1}{N} \sum_{s=1}^N (\nu_s + b_s^2)^{-1} \right)^{-1}
= o\left( \frac{ N \lambda_d^2(P) }{ n \log^2 n} \right).
\end{equation}
For each $s \in [N]$, define
\begin{equation} \label{eq:def:tau}
\tau_s = \frac{ \sum_{1 \le i \le j \le n} \E (\AA{s} - P)_{ij}^2 }
		{ 16n(n+1) },
\end{equation}
and suppose that
\begin{equation} \label{eq:assum:ratiogrowth}
  \lim_{n \rightarrow \infty}
	\frac{ \sqrt{d} (\log N + \log n)^2
		\max_{s \in [N]} \tau_s^{-1} (\nu_s + b_s^2) }
		{ \sqrt{n} }
	= 0
\end{equation}
in such a way that
\begin{equation} \label{eq:assum:harmonicish}
 \left(\max_{s \in [N]} \tau_s^{-1} \nub{s} \right)
        \sqrt{ \sum_{s=1}^N \frac{ \tau_s^{-1} \sum_{t=1}^N \nub{t}^{-1}  }
                { \nub{s}^{-1} \sum_{t=1}^N \tau_t^{-1}  } }
= O\left( \frac{ \sqrt{n} \log n }{ \sqrt{d}\left(\log n + \log N\right)^3 } \right).
\end{equation}
Provided that
\begin{equation} \label{eq:assum:harmonicish2}
\sum_{s=1}^N \frac{ \nub{s}^{-1} }{ \sum_t \nub{t}^{-1} }
		\left(1 - \frac{\tau_s^{-1} }{\wopt_s \sum_t \tau_t^{-1} }
			\right)^2
= O\left( \frac{ \log n }{ \sqrt{N}\left( \log n + \log N \right) } \right),
\end{equation}
then for all suitably large $n$, it holds with probability $1-O(n^{-2})$
that there exist orthogonal matrices
$V,\Vopt \in \R^{d \times d}$ such that
\begin{equation*} \begin{aligned}
\| \Xhat - X V \|_{\tti} &\le
\frac{ Cd }{ \lambda_d^{1/2}(P) }
        \left( \sum_{s=1}^N (\nu_s + b_s^2)^{-1} \right)^{-1/2}
        + \frac{ Cd \kappa(P) n}{ \lambda_d^{3/2}(P) }
        \left( \sum_{s=1}^N (\nu_s + b_s^2)^{-1} \right)^{-1} \\
&~~~\text{and} \\
\| \Xopt - X \Vopt \|_{\tti} &\le
	\frac{ Cd }{ \lambda_d^{1/2}(P) }
        \left( \sum_{s=1}^N (\nu_s + b_s^2)^{-1} \right)^{-1/2}
	+ \frac{ Cd \kappa(P) n }{ \lambda_d^{3/2}(P) }
        \left( \sum_{s=1}^N (\nu_s + b_s^2)^{-1} \right)^{-1}.
\end{aligned} \end{equation*}
That is, the plug-in estimator $\Xhat$, based on the estimates weights
$\{ \what_s \}_{s=1}^N$,
recovers $X \in \R^{n \times d}$
at the same rate as the estimator
$\Xopt$ based on the optimal weights $\{ \wopt_s \}_{s=1}^N$.
\end{theorem}

The proof is given in Appendix~\ref{apx:estimation}.
Note that the bound on
$\| \Xopt - X \Vopt \|_{\tti}$ follows straightforwardly
from Theorem~\ref{thm:subexpXtilde}.
The analysis of $\sum_s \what_s \AA{s}$ requires more care,
since the weights $\{ \what_s \}_{s=1}^N$ now depend on the observed networks.

\begin{remark} {\em
The quantities $\{ \tau_s^{-1} \nub{s} \}_{s=1}^N$
in Equations~\eqref{eq:assum:ratiogrowth},~\eqref{eq:assum:harmonicish}
and~\eqref{eq:assum:harmonicish2} are, in essence,
measures of the tightness of the sub-gamma tail
bounds $\E (\AA{s} - P)_{i,j}^2 \le 32(\nu_s + b_s^2)$.
For example, the quantity controlled by Equation~\eqref{eq:assum:harmonicish2}
is the $\chi^2$  divergence between the distribution on $[N]$
encoded by the optimal weights
$\{ \wopt_s : s \in [N] \}$ and the distribution given by
$u_s = \tau_s^{-1}/\sum_t \tau_t^{-1}$.
In the simplest case, when $\{ (\AA{s}-P)_{i,j} : 1 \le i \le j \le n \}$
are i.i.d.\ $\calN(0,\rho_s)$ for some $\rho_s > 0$,
we have $(\nu_s,b_s) = (\rho_s,0)$,
so that $32\tau_s = \rho_s = \nub{s}$
and thus $\tau_s^{-1} \nub{s} = 32$ for all $s \in [N]$.
The growth conditions in
Equations~\eqref{eq:assum:ratiogrowth},~\eqref{eq:assum:harmonicish}
and~\eqref{eq:assum:harmonicish2} are then satisfied trivially,
so long as
$d (\log N + \log n)^4 = o(n)$.
}
\end{remark}

\begin{remark} {\em
As mentioned just before Theorem~\ref{thm:subexpXtilde},
the more general case, in which
$(\AA{s} - P)_{i,j}$ is $(\nu_{s,i,j},b_{s,i,j})$-sub-gamma
for each $s \in [N], i,j \in [n]$,
is notably more complicated to analyze than the setting where
there is a single $(\nu_s,b_s)$ parameter to estimate for each network
$s=1,2,\dots,N$.
We expect that mild structural assumptions on the
matrices $[\nu_{s,i,j}]_{i,j=1}^n$ and
$[b_{s,i,j}]_{i,j=1}^n$ (e.g., low rank)
would yield similar estimation procedures to that described above.
The technical results presented in Appendix~\ref{apx:subexpXtilde}
give an indication of the cumbersome notation required to
handle more general structural assumptions on the sub-gamma parameters.
We leave this generalization for future work.  
}
\end{remark}

\section{Perfect Clustering with Sub-gamma Edges}
\label{sec:perfect}
With Theorem~\ref{thm:subexpXtilde} in hand, we obtain an immediate
bound on the community misclassification rate in block models
by an argument similar to that in~\cite{LyzSusTanAthPri2014}.

This bound holds for {\em weighted} stochastic blockmodels (SBMs),
in which multiple weighted graphs are drawn with a shared block structure.
Weighted versions of the stochastic blockmodel have received increasing
attention in recent years \citep[see, e.g.,][]{AicJacCla2015}.
Extensions to the case of weighted edges in a multiple-network setting
was recently discussed in \cite{KhiLoh2018},
where the authors considered a network time series problem.
The definition given here subsumes any variant on the weighted SBM in which,
conditional on the community assignments,
edge distributions are independent and obey a sub-gamma tail bound.


\begin{definition}[Joint Sub-gamma SBM] {\em
Let $B \in [0,1]^{K \times K}$ 
and define the community membership matrix $Z \in \{0,1\}^{n \times K}$
by $Z_{ik} = 1$ if the $i$-th vertex belongs to community $k$
and $Z_{ik} = 0$ otherwise.
We say that random adjacency matrices
$\AA{1},\AA{2},\dots,\AA{N} \in \R^{n \times n}$ are
{\em jointly sub-gamma stochastic block model}, written
\begin{equation*}
(\AA{1},\AA{2},\dots,\AA{N}) \sim \JsubSBM(n,B,Z,\{(\nu_s,b_s)\}_{s=1}^N),
\end{equation*}
if conditional on $Z$, the $N$ adjacency matrices are independent
with a common expectation $\E \AA{s} = Z B Z^T$ ($s=1,2,\dots,N$),
and within each adjacency matrix $\AA{s}$,
$\{ \AA{s}_{ij} : 1 \le i \le j \le n \}$ are independent
$(\nu_s,b_s)$-sub-gamma random variables.
} \end{definition}

Our theoretical results from Section~\ref{sec:method}
have an immediate implication for
detection and estimation of shared community structure
in the joint sub-gamma SBM model.  
This result generalizes Theorem 6 in \cite{LyzSusTanAthPri2014}.

\begin{theorem} \label{thm:perfect}
Suppose that 
$(\AA{1},\dots,\AA{N})$ are drawn from $\JsubSBM(n,B,Z, \{(\nu_s,b_s)\}_{s=1}^N),$
where $B = Y Y^T \in \R^{K \times K}$ is fixed with
$Y \in \R^{K \times d}$ having $K$ distinct rows given by
$Y_1,Y_2,\dots,Y_K \in \R^d$.
Let $\Atilde = \sum_{s=1}^N w_s \AA{s}$, where
$\{w_s\}_{s=1}^N$ are fixed non-negative weights summing to $1$.
Let $\nmin = \min_{k \in [K]} \sum_{i=1}^n Z_{ik}$
denote the size of the smallest community.
Suppose that $\{ (w_s,\nu_s,b_s) \}_{s=1}^N$ obey the growth conditions
in Equation~\eqref{eq:paragrowth} and that the
smallest community grows as
\begin{equation} \label{eq:Nmingrowth}
\nmin = \omega\left(
                d^2 \left( \sum_{s=1}^N w_s^2(\nu_s + b_s^2) \right)
	+ d^2 \left( \sum_{s=1}^N w_s^2(\nu_s + b_s^2) \right)^2\log^4 n \right).
\end{equation}
Let $\tau : [n] \rightarrow [K]$ be the true underlying
assignment of vertices to communities,
so that $\tau(i) = k$ if and only if $Z_{ik} = 1$,
and let $\tauhat : [n] \rightarrow [K]$
be the estimated community assignment function based on
an optimal $K$-means clustering of the rows of $\Xtilde = \ASE( \Atilde, d)$.
Then the communities are recovered exactly almost surely, i.e.,
as $n \rightarrow \infty$,
\begin{equation} \label{eq:perfect}
\Pr\left[ \min_{\pi \in S_K} \left|\{i \in [n] : \pi(\tau(i)) \neq \tauhat(i)\}\right|  \rightarrow 0 \right] = 1
\end{equation}
\end{theorem}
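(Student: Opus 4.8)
The plan is to combine the $(2,\infty)$-norm recovery guarantee of Theorem~\ref{thm:subexpXhat} with a standard $K$-means perturbation argument, in the spirit of the proof of Theorem~6 in \cite{LyzSusTanAthPri2014}. First I would fix the block-model geometry. Writing $\mathcal{X} = ZX \in \R^{n \times d}$ for the matrix of per-vertex latent positions, we have $P = \E\AA{s} = ZBZ^T = \mathcal{X}\mathcal{X}^T$, and the rows of $\mathcal{X}$ take only the $K$ distinct values $X_1,\dots,X_K$, the value $X_k$ occurring $\sum_i Z_{ik} \ge \nmin$ times. Since $B = XX^T$ is fixed with distinct rows, the minimum separation $\delta := \min_{k\ne\ell}\|X_k - X_\ell\|$ is a fixed positive constant, invariant under a common orthogonal rotation. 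I would also record the usual block-model spectral facts: from $\sigma_{\min}(Z)^2 = \nmin$, $\|Z\|^2 \le n$, and the fixed, rank-$d$ spectrum of $B$, we get $\lambda_d(P) = \Theta(\nmin)$ and $\kappa(P) = O(n/\nmin)$.

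Next, apply Theorem~\ref{thm:subexpXhat} to $\Atilde = \sum_{s=1}^N w_s\AA{s}$: its hypothesis~\eqref{eq:paragrowth} is assumed, so with probability $1 - Cn^{-2}$ there is an orthogonal $V$ with $\|\Xhat - \mathcal{X}V\|_{\tti} \le \varepsilon_n$, where $\varepsilon_n$ is the displayed bound. Substituting $\lambda_d(P) = \Theta(\nmin)$ and $\kappa(P) = O(n/\nmin)$ and abbreviating $S = \sum_{s=1}^N w_s^2\nub{s}$, this becomes $\varepsilon_n = O\!\big(d S^{1/2}\nmin^{-1/2}\log n + d\, n^2 S\, \nmin^{-5/2}\log^2 n\big)$ (and, when $K$ is of constant order so that $\kappa(P)=O(1)$, simply $\varepsilon_n = O(dS^{1/2}\nmin^{-1/2}\log n + dS\nmin^{-1/2}\log^2 n)$). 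In either case the point of the growth condition~\eqref{eq:Nmingrowth} is that it forces $n\varepsilon_n^2 = o(\nmin)$, which is the single quantitative input the clustering step needs.

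Then comes the $K$-means argument. Every row $\Xhat_i$ lies within $\varepsilon_n$ of the center $\nu_{\tau(i)} := V^T X_{\tau(i)}$, and the $K$ centers are pairwise at least $\delta$ apart; also $\varepsilon_n \to 0$, so eventually $\varepsilon_n \ll \delta$. I would then argue: (i) using the $\nu_k$ as centroids, the $K$-means cost of the partition induced by $\tau$ is at most $\sum_i \|\Xhat_i - \nu_{\tau(i)}\|^2 \le n\varepsilon_n^2$, hence the optimal cost is at most $n\varepsilon_n^2$; and (ii) any $K$-partition of $[n]$ that does not equal the partition induced by $\tau$ must (by pigeonhole, since both have exactly $K$ parts) place vertices from two distinct true communities in a common cluster, and — because each community contributes $\ge \nmin$ points tightly concentrated within $\varepsilon_n$ of its center, and no single centroid can lie within $\delta/2$ of two centers — such a partition incurs cost $\Omega(\nmin\delta^2)$. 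Since $n\varepsilon_n^2 = o(\nmin)$, for all large $n$ the optimal $K$-means partition must coincide with the one induced by $\tau$, i.e.\ $\tauhat = \pi\circ\tau$ for some $\pi \in S_K$, so the bracketed event in~\eqref{eq:perfect} occurs. As the failure probability is $Cn^{-2}$, which is summable, Borel--Cantelli upgrades this to the almost-sure statement~\eqref{eq:perfect}.

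The main obstacle is making step (ii) rigorous and uniform over all partitions. The delicate case is when an optimal clustering creates a spurious small cluster: then one of its centroids need not be near any true center a priori. Here one must exploit optimality — a cluster much smaller than $\nmin$ cannot be optimal while the remaining mass still splits into $K-1$ or fewer tight groups at separation $\delta$ — to conclude that each optimal centroid in fact lies within $O\!\big(\varepsilon_n + \sqrt{n\varepsilon_n^2/\nmin}\big) = o(\delta)$ of a \emph{distinct} true center, at which point the "one centroid cannot serve two communities" estimate applies. This bookkeeping is precisely what is done in \cite{LyzSusTanAthPri2014}; the remaining work is to verify that the quantitative thresholds appearing there translate, via the spectral facts $\lambda_d(P)=\Theta(\nmin)$ and $\kappa(P)=O(n/\nmin)$, into the stated hypotheses~\eqref{eq:paragrowth} and~\eqref{eq:Nmingrowth}.
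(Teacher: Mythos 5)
Your proposal follows essentially the same route as the paper, whose proof simply invokes Theorem~\ref{thm:subexpXhat} together with the fact that $\lambda_d(P) = \Omega(n)$ for a fixed-parameter SBM and then adapts the $K$-means perturbation argument of Theorem 6 in \cite{LyzSusTanAthPri2014}; your write-up fills in exactly those omitted details, including the Borel--Cantelli step. The one point to watch is that your general substitution $\kappa(P) = O(n/\nmin)$ does not by itself reduce \eqref{eq:Nmingrowth} to $n\varepsilon_n^2 = o(\nmin)$ unless one also uses $\nmin \asymp n$ (equivalently the paper's $\lambda_d(P) = \Omega(n)$), which is the form of the spectral fact the paper actually relies on.
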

\begin{proof}
The result follows from Theorem~\ref{thm:subexpXtilde}
and the fact that under the stochastic block model with fixed parameters,
we have $\lambda_d(P) = \Omega(n)$
\citep[see, for example, Observation 2 in][]{LevAthTanLyzPri2017}.
The proof is otherwise a direct adaptation of the proof of Theorem 6 in
\cite{LyzSusTanAthPri2014}, and we omit the details.
\end{proof}

%

\begin{remark}[Extensions of Theorem~\ref{thm:perfect}] {\em 
This result can be generalized in two natural directions.
The first would be to allow for the communication matrix $B$ to depend on $n$.
Generally speaking, provided the entries of $B$
do not go to zero too quickly, the quantities $\nmin$ and $\lambda_d(P)$ will 
grow quickly enough to ensure that $\| \Xtilde - X W \|_F \rightarrow 0$,
and Theorem~\ref{thm:perfect} still holds
This extension is straightforward  and we omit the details
(though see Remark~\ref{rem:sparsity} below).
Another generalization
would be to expand the class of clustering algorithms for
which the perfect recovery condition in~\eqref{eq:perfect} holds.
For example, an argument similar to that sketched in
Theorem~\ref{thm:perfect} would apply equally well to another distance-based
clustering method, such as $K$-medians, $K$-medoids
or $K$-centers \citep{GarNeeRao1977}.
When there are $K$ clusters, the matrix $Y$ has $K$ distinct rows,
say, $y_1,y_2,\dots,Y_K \in \R^d$, so that for all $i \in [n]$,
$X_i \in \{y_1,y_2,\dots,y_K\}$.
By Theorem~\ref{thm:subexpXtilde},
provided the parameters grow at suitable rates, for all suitably large $n$,
the rows of $\Xtilde$ lie in $K$ disjoint balls centered at the
$K$ points $\{y_1,y_2,\dots,y_K\}$,
and a clustering solution that does not place a centroid in each of these
$K$ balls can be improved upon by a solution that does.
We leave it for future work to characterize the clustering algorithms that
obtain this recovery guarantee and the precise growth conditions
on the model parameters required for these different algorithms to succeed.
} \end{remark}

\begin{remark}[Incorporating Sparsity] \label{rem:sparsity} {\em
A standard way to incorporate sparsity in the SBM
is to let $B = q_n Y Y^T$, where $q_n \rightarrow 0$ is a sparsity parameter.  
Similar to in our previous Remark,
recovery of the community memberships requires, in essence,
that the rows of $\sqrt{q_n} Y$ are suitably well separated.
This imposes a lower-bound on how quickly the sparsity parameter
$q_n$ can converge to zero.
Specifically, the condition in Equation~\eqref{eq:paragrowth} is satisfied
so long as
\begin{equation*}
\sum_s w_s^2(\nu_s + b_s^2)
= o\left( \frac{ q_n^2 n }{ \log^2 n } \right).
\end{equation*}
Since a Bernoulli with success probability $q$ is a $(q,1)$-sub-gamma random
variable, when $N=1$ this becomes
$(q_n+1)\log^2 n = o( q_n^2 n )$, i.e., $q_n = \omega( n^{-1/2} \log n)$.  For a single network, this 
 is a stricter requirement on the average degree  than the more typical 
$q_n = \omega(n^{-1}) \log^c n)$ for some constant $c \ge 0$.
While the extension to sub-gamma edge distributions
allows a much more general class of noise models,
our general bounds are not necessarily tight
when applied to the highly-structured setting of Bernoulli edges,
where the variance is constrained by the mean.    When this additional structure is present, it is, unsurprisingly, more efficient to leverage it 
\citep[see, e.g.,][]{LeLevLev2018}.
}
\end{remark}

\section{Numerical experiments}
\label{sec:experiments}
We now turn to an experimental investigation of the effect of weighted
averaging. We begin with simulated data,
and then turn to a neuroimaging application.

\subsection{Effect of weighted averaging on estimation}
We begin by investigating the extent to which weighted averaging improves upon
its unweighted counterpart in the case where
we observe multiple weighted graphs with network-specific
edge variances, as in Examples~\ref{ex:gaussian} and~\ref{ex:subgammasubject}.

We consider the following simulation setup.
On each trial,
we generate the rows of $X \in \R^{n \times d}$
independently and identically distributed as
$\calN\left( (1,1,1)^T, \Sigma \right)$, where
\begin{equation*}
\Sigma = \begin{bmatrix} 3 & 2 & 1 \\
			 2 & 3 & 2 \\
			 1 & 2 & 3 \end{bmatrix},
\end{equation*}
and take $P = X X^T$. 
Next, independently for each network $s=1,2,\dots,N$,
we draw edge weights $\{ (\AA{s}-P)_{ij} : 1 \le i \le j \le n \}$
independently from a $0$-mean Laplace distribution
with variance $\sigma^2_s > 0$.
We chose this distribution because it has
heavier tails than the Gaussian while still belonging to the class of
sub-gamma random variables. Similar results to those presented here were also
observed under Gaussian, exponential, and gamma error distributions.
Without loss of generality, we take the first network to have edge variance $\sigma^2_1 \ge 1,$
while all other networks have unit edge variance,
so that $\sigma^2_s = 1$ for $s > 1$.
Thus, the first network is an outlier with higher edge-level variance than
the other observed networks.
We compare weighted and unweighted averaging, with weights estimated as described in Section~\ref{sec:estimate},
to obtain the weighted average $\Ahat = \sum_{s=1}^N \what_s \AA{s}$,
and the unweighted average $\Abar = N^{-1} \sum_{s=1}^N \AA{s}$.
Rank-$d$ eigenvalue truncations of each of these yield 
estimates $\Phatwtd$ and $\Phatunif$, respectively.
We evaluate the weighted estimate by its relative improvement,
\begin{equation*}
\frac{ \| \Phatunif - P \| - \| \Phatwtd - P \| }{ \| \Phatunif - P \| }.  
\end{equation*}
We can think of this quantity as a measure of the outlier's influence. 

We repeat this experiment for different values of the number of vertices $n$,
the number of networks $N$ and the outlier variance $\sigma_1^2$, and average over 50 replications for each setting.  
Figure~\ref{fig:laplacian} summarizes the results for relative improvement measured in Frobenius norm.   We also computed the relative improvement in spectral and  $(2,\infty)$ matrix norms, with similar results (omitted).   
Figure~\ref{subfig:laplacian:var} shows relative improvement as a function
of the outlier variance $\sigma_1^2$, for different values of $N$, with $n=200$ fixed.  
Figure~\ref{subfig:laplacian:nvx} shows relative improvement as a function
of $n$ while holding the outlier variance $\sigma_1^2$
fixed, again for different values of $N$.   Figure~\ref{fig:laplacian} suggests two main conclusions.   First, the relative improvement is never negative, showing that even when the outlier variance is small, there is no disadvantage to using the weighted average. Second, even a single outlier with larger edge variance can significantly impact the unweighted average.   
Similar trends to those seen in Figure~\ref{fig:laplacian} apply to the error in recovering $X$.  

\begin{figure*}[t!]
  \centering
  \subfloat[\label{subfig:laplacian:var}]{ \includegraphics[width=0.45\textwidth]{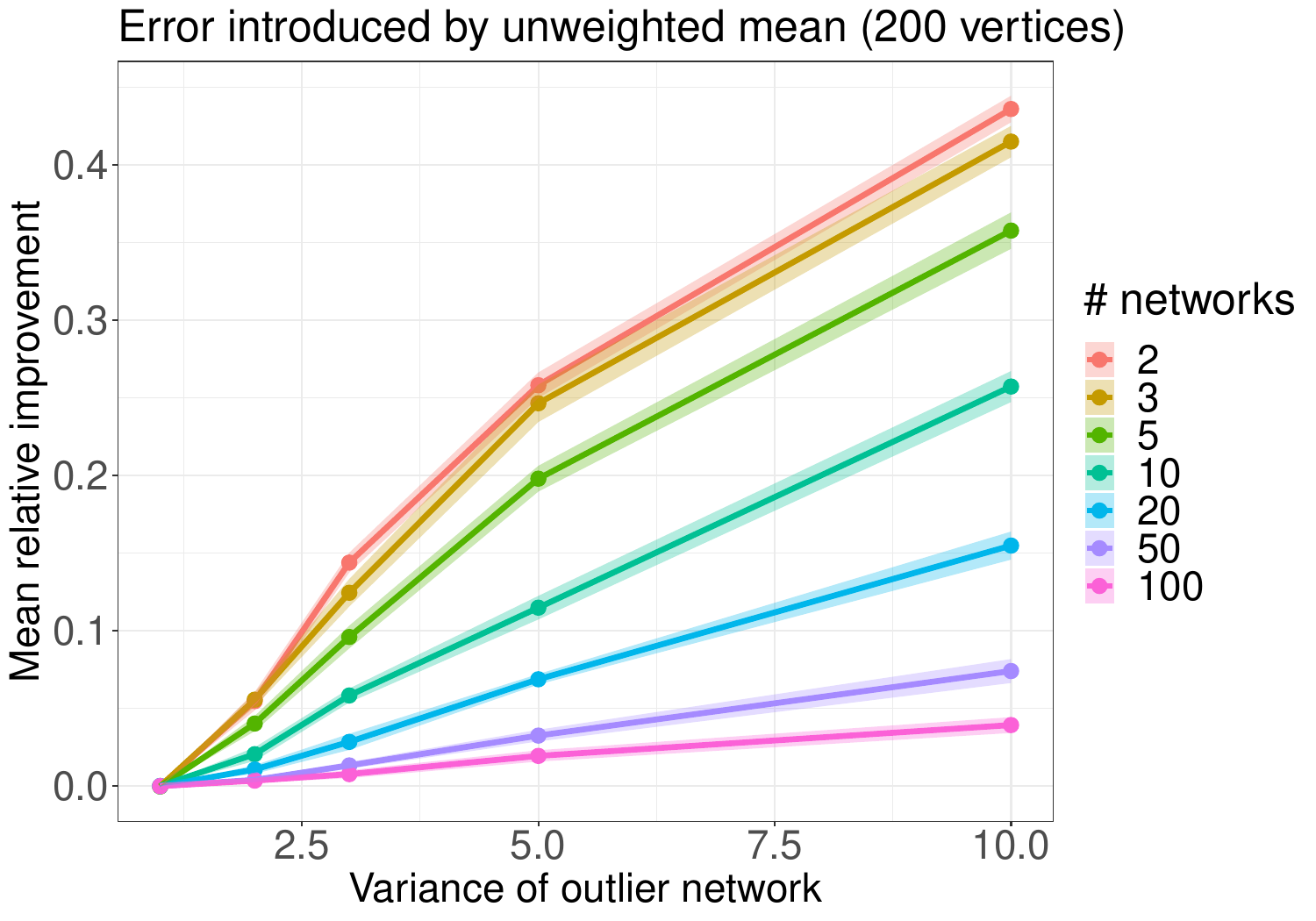} }
  \subfloat[\label{subfig:laplacian:nvx}]{ \includegraphics[width=0.45\textwidth]{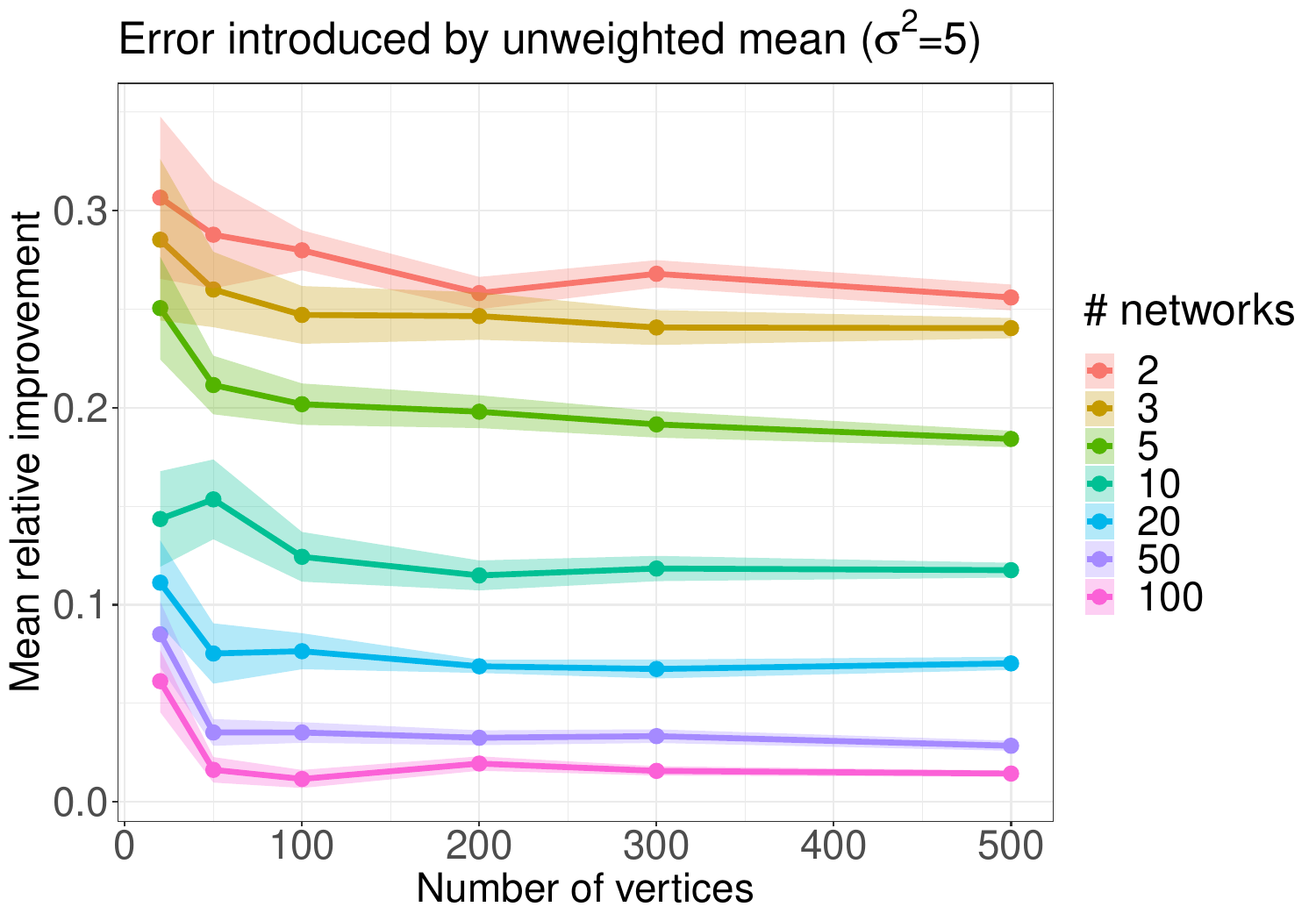} }
  \caption{\small Relative improvement of the eigenvalue truncation $\Phatwtd$
	of the weighted estimate $\Ahat$
	compared to its unweighted counterpart $\Phatunif$
	with Laplace-distributed edge noise.
	Each data point is the mean of 50 independent trials.
	(a) Relative improvement in Frobenius norm 
	as a function of the variance $\sigma_1^2$ of the outlier
	network for several values of the number of networks $N$,
	with the number of vertices $n=200$ fixed.
	(b) Relative improvement as a function of the number of vertices $n$ for
	different numbers of networks $N$, with outlier variance
	$\sigma_1^2 = 5$ fixed. }  
  \label{fig:laplacian}
\end{figure*}

\subsection{Application to neuroimaging data}

We briefly investigate how the choice of network
average impacts downstream analyses of real data.
We use the COBRE data set \citep{AineETAL2017},
a collection of fMRI scans from 69 healthy patients and
54 schizophrenic patients, for a total of $N=123$ subjects.
Each fMRI scan is processed to obtain a weighted graph
on $n=264$ vertices, in which each vertex represents a brain region, and
edge weights capture functional connectivity, as measured by
regional averages of voxel-level time series correlations.
The data are processed so that the brain regions align across subjects,
with the vertices corresponding to regions in the Power parcellation
\citep{PowerETAL2011}.

In real data,
we do not have access to the true low-rank matrix $P$,
if such a matrix exists at all.
Thus, to compare weighted and unweighted network averaging
on real-world data, we compare their impact on downstream tasks
such as clustering and hypothesis testing.  Even for these tasks, the ground truth is typically not known, and thus it is not possible to
directly assess which method returns a better answer.
Instead, we will check whether the weighted averages yield appreciably
different downstream results, and point to the synthetic experiments
as evidence that the weighted network average is likely the better choice.

We begin by examining
the effect of weighted averaging on estimated community structure.
We make the assumption once again that these networks share
a low-rank expectation $\E \AA{s} = P = X X^T$, with $X \in \R^{n \times d}$.
We will compare the behavior of clustering applied to the unweighted
network mean $\Phatunif$ against the behavior of clustering applied to its
weighted counterpart $\Phatwtd$.
In practice, the model rank $d$ is unknown and must be estimated from the data.
While this model selection task is important,
it is not the focus of the present work,
and thus instead of potentially introducing additional noise from imperfect estimation, we simply compare performance of the two estimators over a range of values of $d$.   For each fixed model rank $d$,
we first construct the estimate $\Xhat^{(d)} = \ASE( \Ahat, d )$,
and then estimate communities by applying $K$-means clustering
to the $n$ rows of $\Xhat^{(d)}$. 
Denote the resulting assignment of vertices to $d$ communities
by $\chat \in [K]^n$, and let $\cbar \in [K]^n$ denote the clustering
obtained by $K$-means applied to the rows of $\Xbar^{(d)} = \ASE( \Abar, d)$.    We measure the difference between these two assignments by  the discrepancy 
\begin{equation} \label{eq:discrep}
\delta(c,c')
= n^{-1} \min_{ \pi \in S_K }
        \sum_{i=1}^n \indicator\{ c_i \neq \pi( c'_i ) \},
\end{equation}
where $S_K$ denotes the set of all permutations of the set $[K]$.
This discrepancy measures the fraction of vertices that are assigned to
different communities by $c$ and $c'$ after accounting for
possible community relabeling.
The optimization over the set of permutations in~\eqref{eq:discrep}
can be solved using the Hungarian algorithm~\citep{Kuhn1955}.

Figure~\ref{fig:cobre:comms} shows the discrepancy
$\delta(\chat, \cbar)$ as a function of the number of communities $K$.
For simplicity, we take the number of communities equal to the model rank $d$,
though we note that similar patterns appear when we allow $K$ and $d$ to vary
separately.
In order to account for the possibility that the healthy and schizophrenic
populations display different community structures,
the three subplots of Figure~\ref{fig:cobre:comms}
show the results of the community estimation experiment just described
as applied only to the 69 healthy patients in the data set
(subplot a),
as applied only to the 54 schizophrenic patients
(subplot b)
and when pooling the healthy and schizophrenic patients
(subplot c).
Note that a similar pattern holds in all three of these cases.
Each data point in the figure is the mean of 20 independent runs
of $K$-means with random starting conditions,
with shaded regions indicating two standard errors.
It is clear from the plot that for a wide array of model choices,
the weighted and unweighted average networks
result in assigning a non-trivial fraction of the
vertices to different clusters.
Thus switching from unweighted to weighted averaging is likely
to have considerable effects on downstream inference tasks
pertaining to community structure.

\begin{figure*}
  \centering
  \subfloat[\label{subfig:cobre:comms:healthy}]{ \includegraphics[width=0.3\textwidth]{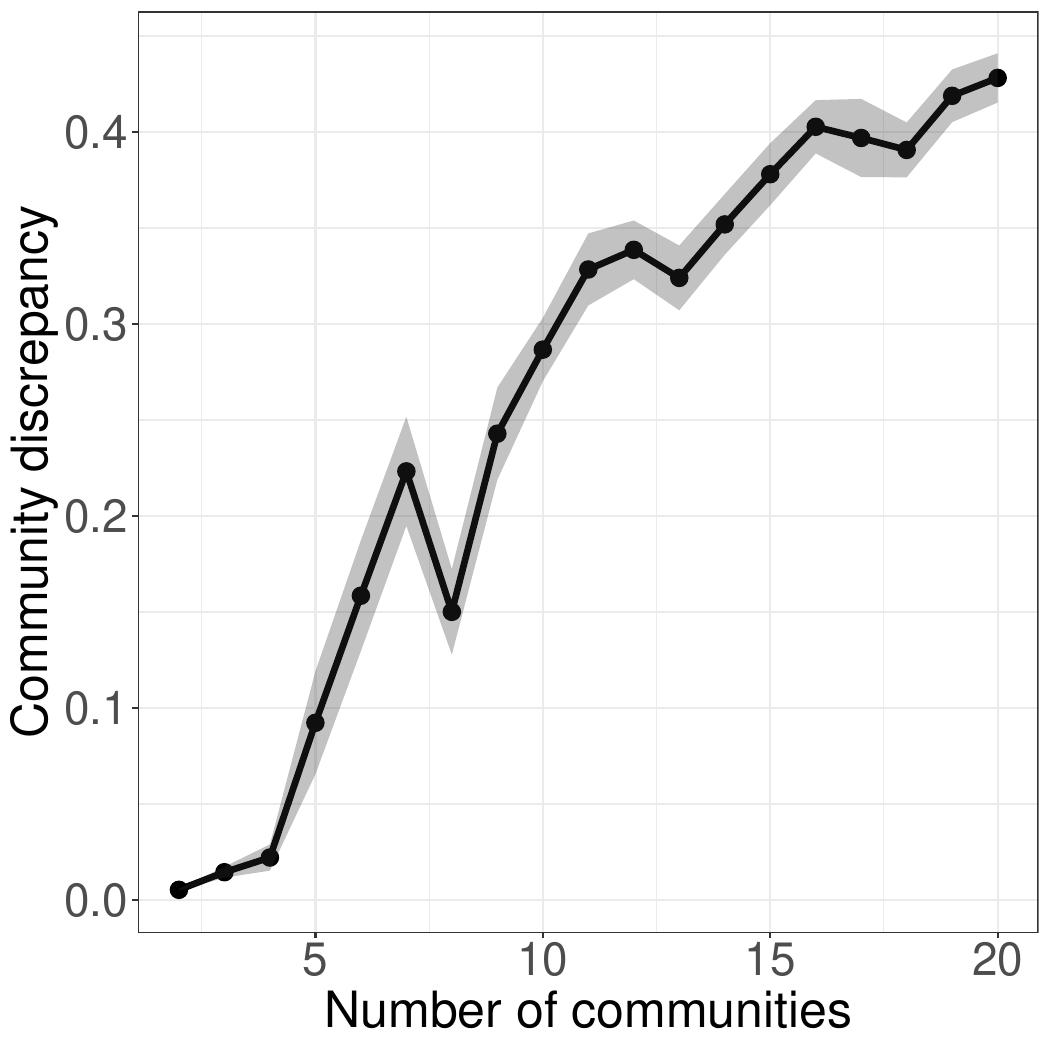} }
  \subfloat[\label{subfig:cobre:comms:schiz}]{ \includegraphics[width=0.3\textwidth]{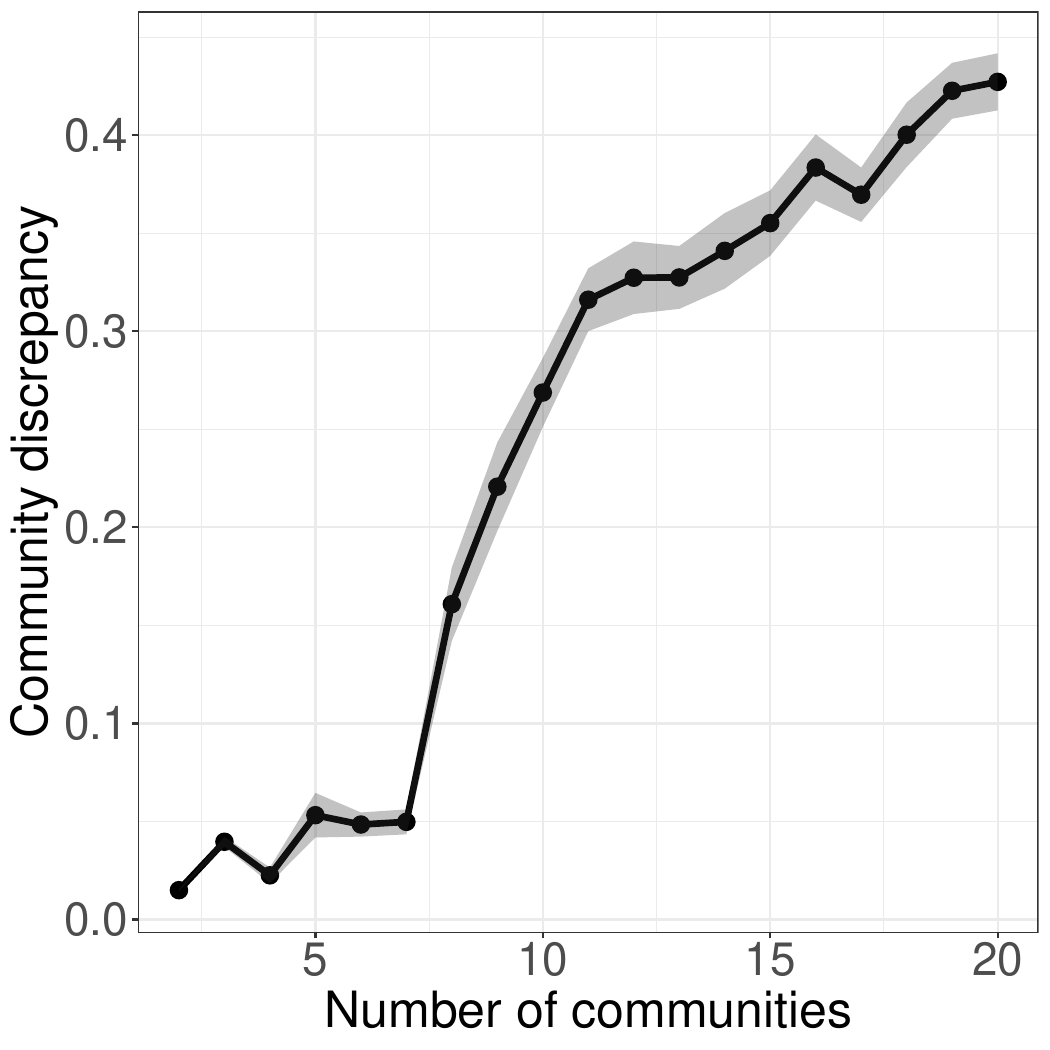} }
  \subfloat[\label{subfig:cobre:comms:full}]{ \includegraphics[width=0.3\textwidth]{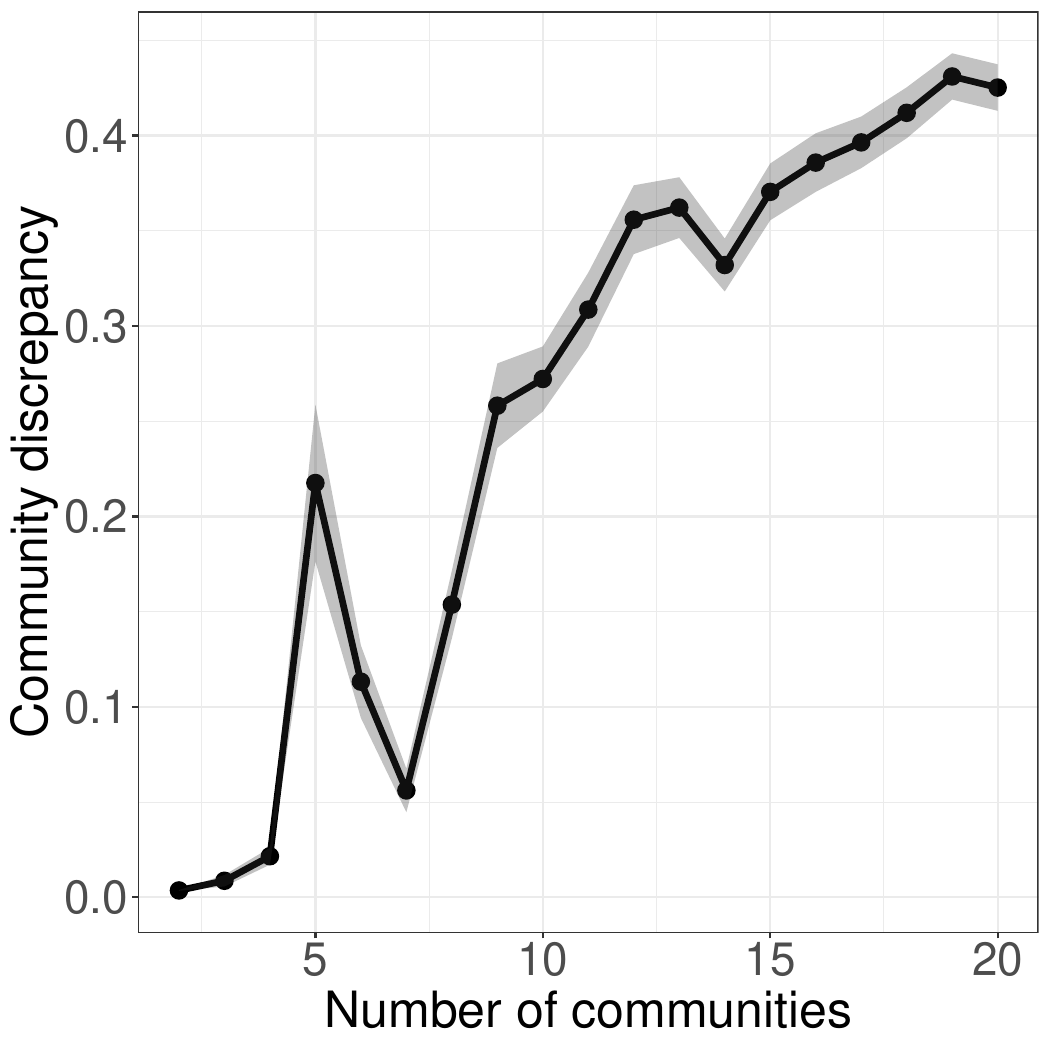} }

  \caption{ \small Fraction of vertices assigned to different communities when
        clustering the weighted and unweighted average networks
	based on the (a) healthy
	(b) schizophrenic
	and (c) pooled healthy and schizophrenic patients,
        as a function of the number of communities used.
	Each data point the mean of 20 independent
        trials, with shaded regions indicating two standard errors of the
        mean (randomness is present in this experiment due to starting
        conditions of the clustering algorithm). We see that over a broad
        range of model parameters (i.e., number of communities),
        the choice to use a weighted or unweighted network average results
        in different cluster assignments for between
	ten and forty percent of the vertices,
	and this pattern persists whether we pool all 123 patients
	or restrict our analysis to the healthy or schizophrenic patients.}
  \label{fig:cobre:comms}
\end{figure*}

The difference in task performance between these two different network averages
persists even for more complicated downstream inference tasks, as we now demonstrate.
The Power parcellation \citep{PowerETAL2011} is one of many ways of
assigning ROIs (i.e., nodes) to larger functional units of the brain,
typically called {\em functional regions}.
The Power parcellation assigns each of the 264 ROIs
(i.e., nodes) in the COBRE data set to one of 14 different communities,
corresponding to functional regions,
with sizes varying between approximately 5 and 50 nodes per community.
Table~\ref{table:power} summarizes the 14 functional regions
and their purported functions.
We refer to a pair of functional regions $(k,\ell)$, for every $k \le \ell$ as a network cell. Thus, the $K=14$ communities in the Power parcellation yield 105 cells.
For a given parcellation, a problem of scientific interest is to identify which network cells, if any, are different in schizophrenic patients compared to the healthy controls.
Such cells likely correspond to locations of functional differences between schizophrenic and healthy brains.
For concreteness, consider testing the hypothesis,
for each of the 105 possible cells $\{k,\ell\}$ for $1 \le k \le \ell \le 14$,
that the average functional connectivity within the cell is the same for the schizophrenic patients and the healthy controls.  These hypotheses can be tested using either weighted or unweighted network averages 
over the healthy and the schizophrenic samples, which we denote $\Phat^{(H)}$ and $\Phat^{(S)}$, respectively.
That is, letting $C_k$ denote the vertices associated
with the $k$-th functional region, we perform a two-sample $t$-test comparing the healthy sample cell mean 
$\{ \Phat^{(H)}_{i,j} : i \in C_k, j \in C_\ell \}$
to the schizophrenic sample cell
$\{ \Phat^{(S)}_{i,j} : i \in C_k, j \in C_\ell \}$,
for each pair $k \le \ell$.
Comparisons of this sort, with appropriate multiple testing correction,
are common in the neuroimaging literature.  
Nonetheless, we are not concerned here with whether or not precisely this
testing procedure is the most appropriate or most accurate method for
assessing differences between the schizophrenic and healthy populations.
Rather, we choose this procedure as representative of the
kinds of methods typically used for comparing network populations
in the literature, and our aim is to assess whether the use of weighted
or unweighted averaging leads to operationally different conclusions based
on the same data.

\begin{table}[h]
\centering
\footnotesize
\begin{tabular}{ r l r | r l r }
Region & Function & Nodes & Region & Function & Nodes \\
\hline
1  &  Uncertain & 28 & 8  &  Visual & 31 \\
2  &  Sensory/somatomotor Hand & 30 & 9  &  Fronto-parietal Task Control & 25 \\
3  &  Sensory/somatomotor Mouth & 5 & 10 &  Salience & 18 \\
4  &  Cingulo-opercular Task Control & 14 & 11 &  Subcortical & 13 \\
5  &  Auditory & 13  & 12 &  Ventral attention & 9 \\
6  &  Default mode & 58  & 13 &  Dorsal attention & 11 \\
7  &  Memory retrieval & 5 & 14 &  Cerebellar & 4 \\
\end{tabular}
\caption{\small Brain regions in the Power parcellation \citep{PowerETAL2011},
	their functions,
	and the number of nodes within each functional region.
	The region numbers correspond to those used in
	Figure~\ref{fig:cobre:2tests} below. }
\label{table:power}
\end{table}

The two subplots in Figure~\ref{fig:cobre:2tests} show the outcome of such
a comparison.
Each tile is colored according to the p-value returned by
a two-sample t-test comparing the estimated connection weights of the
schizophrenic and healthy patients within the corresponding cell.
Tiles highlighted by colored boxes correspond to cells for which the t-test
rejected at the $\alpha=0.01$ level after correcting for multiple comparisons
via the Benjamini-Hochberg procedure.
The left-hand subplot in Figure~\ref{fig:cobre:2tests}
shows the p-values for the unweighted test,
while the right-hand subplot shows the same procedure using the weighted
estimate instead of the unweighted estimate.
We see that after the Benjamini-Hochberg procedure, the weighted average
results in more rejections than the unweighted average,
and rejects a strict superset of the cells rejected by the unweighted
average.
Encouragingly, the cells identified as significant by both procedures
are fairly well localized, in that a few of the regions account
for most of the rejected cells (e.g., region 2 alone is associated
with twelve of the twenty seven cells rejected by both methods).
This suggests that the differences between the healthy and schizophrenic
populations are in fact localized to specific brain regions.
We note that the cells rejected by the weighted procedure and accepted
by the unweighted procedure are also in keeping with this localization,
in that all of the seven additional cells selected by the weighted procedure
are incident on at least one of regions 8, 9 or 13
(visual, fronto-parietal task control and dorsal attention,
respectively).

\begin{figure}[t!]
  \centering
  \subfloat[\label{subfig:cobre:2tests:unwtd}]{ \includegraphics[width=0.45\textwidth]{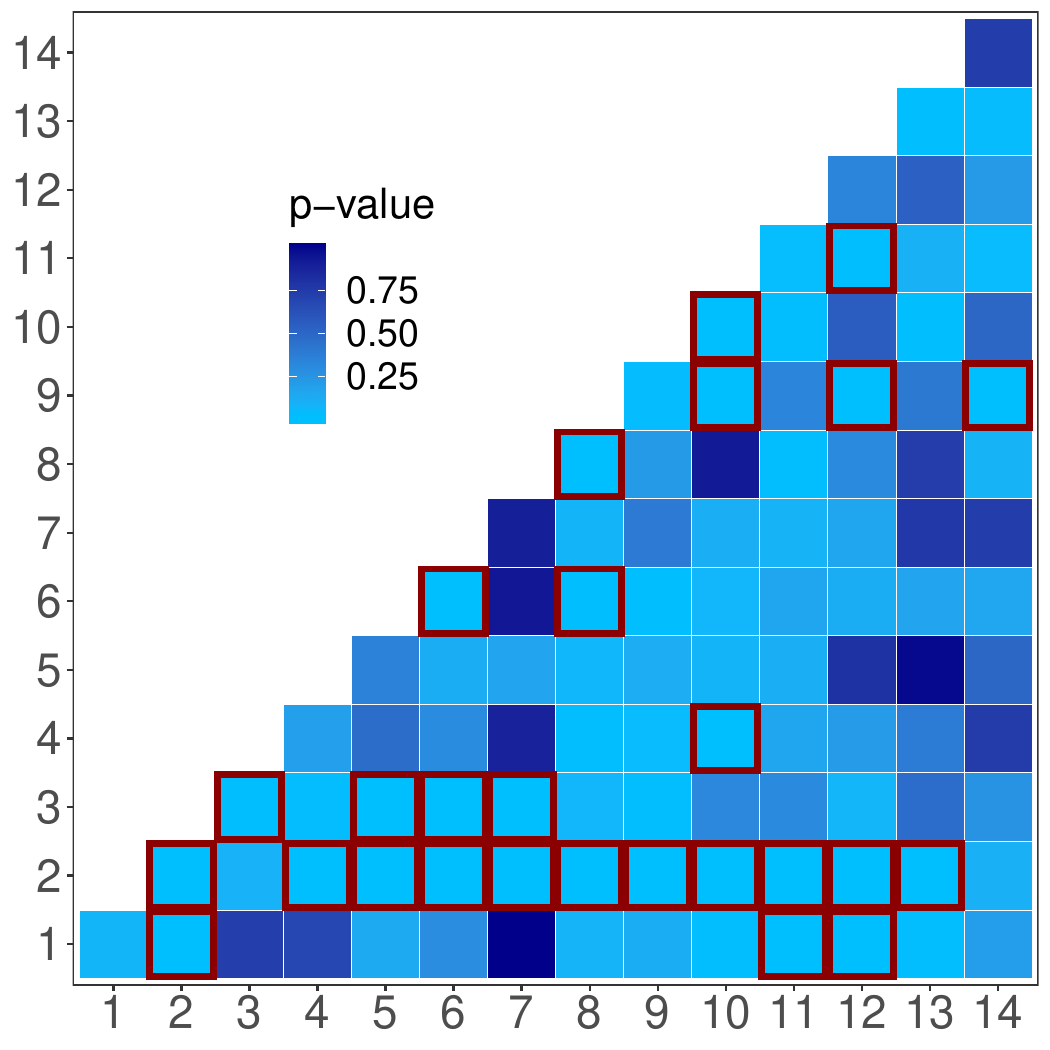} }
  \subfloat[\label{subfig:cobre:2tests:wtd}]{ \includegraphics[width=0.45\textwidth]{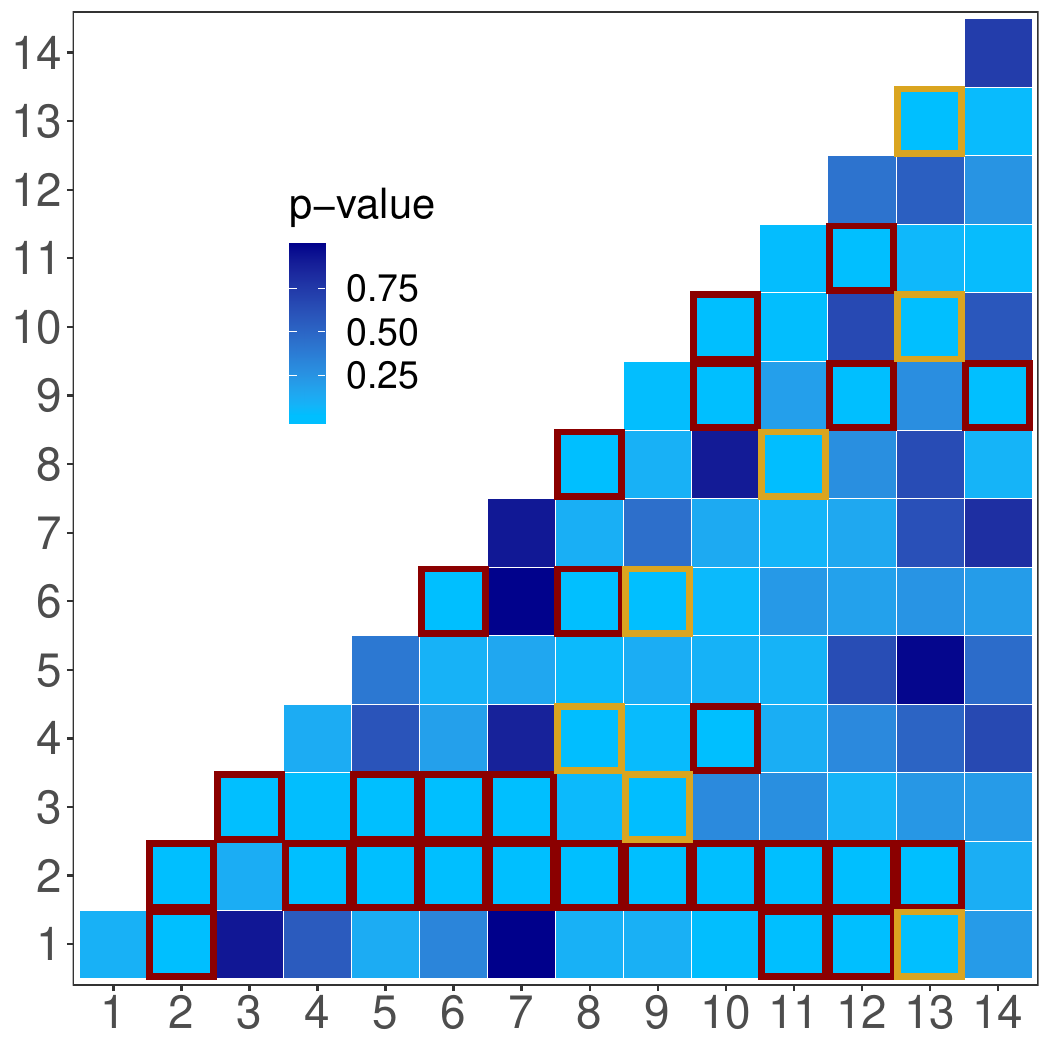} }
  \caption{\small Results of the cell-level significance tests of the unweighted (left) and weighted (right) network averages. Each tile corresponds to a cell, with tiles colored according to p-values. The cells highlighted by red or yellow squares indicate those for which one or both of the weighted and the unweighted procedures rejected the null hypothesis. Cells highlighted in red are those that were rejected by both the unweighted and weighted tests after Benjamini-Hochberg correction at false discovery rate 0.01. Cells highlighted in yellow in the weighted plot (right) correspond to those rejected by the weighted procedure, but not its unweighted counterpart. All cells rejected by the unweighted procedure were also rejected by the weighted procedure. Numbering of the axes corresponds to the brain region numbering given in Table~\ref{table:power} }
  \label{fig:cobre:2tests}
\end{figure}

The brain regions identified by these two procedures are consistent with existing work on the structural correlates of schizophrenia.
Most strikingly, we see exceptionally strong evidence that region 2
differs between schizophrenic and healthy patients.
This is in keeping with existing neuroscientific findings suggesting that
somatomotor processing is altered in schizophrenic patients
\citep[see, e.g.,][]{ShinnAberrant2015,KaufmannETAL2015,LiDysco2019,HummerFunctional2020}.
As another example, several cells involving the default mode network (region 6)
is involved in several of the cells identified by both methods.
This region, which is believed to be involved in undirected thought
(i.e., mind wandering), has been previously associated with
schizophrenia
\citep{BluETAL2007,WhitfieldGabrieliETAL2009,FoxETAL2015}.
It is interesting to note that this localized structure has emerged without any
explicit encoding of such a structure among the cells in the testing
procedure itself.

We note that the cell-level tests conducted by our two procedures outlined
above are likely to be dependent, owing to the fact
that each cell-level test incorporates edge-level information
that is likely to be correlated within each network.
The Benjamini-Yekutieli procedure, designed to control false discovery rate
under such dependence, yields qualitatively similar results to those seen
in Figure~\ref{fig:cobre:2tests}, though in that case, the unweighted
procedure rejects a superset of the cells rejected by the weighted procedure.
Broadly speaking, then, the choice to use a weighted or unweighted network
average has nontrivial consequences for downstream inference,
in that the procedures identify different sets of cells as being
implicated in the schizophrenia.
On the other hand, the weighted and unweighted procedures largely
agree in the cells that they identify as differing across the
two populations.
We conjecture that weighted network averaging will generally yield more
conservative results, leading to a smaller Type I error,
though our experiment just outlined shows that this need not be true uniformly.
We expect that these differences will mostly affect cells that are not clear-cut in either direction, and thus the specific problem and dataset at hand will determine how much the results differ.   At the same time, the non-clear cut cases are the most likely zone for new discoveries, and thus it is important to understand how different averaging choices affect the downstream analyses.  
Comparing the results from different averages can also be used as a measure of stability, increasing our confidence in conclusions when they agree \citep{Yu2013}.

\section{Summary and Discussion}
\label{sec:discussion}
We have presented an approach to handling heterogeneity in edge-level noise
for estimating shared structure from a collection of networks.
Under the setting where edge weights are i.i.d.\ Gaussian within the same
network, we have
shown that a weighted network average with weights proportional to estimated
variances of the edges is asymptotically equivalent to the maximum-likelihood
estimate in the case where the edge variances are known.
We have also presented a class of estimators
under weaker conditions on the tails, sub-Gaussian or sub-gamma instead of the Gaussian.
While showing theoretically
that these weighted estimates strictly improve upon unweighted network averages
is not easily done under these weaker assumptions,
synthetic experiments bear out the intuition that a weighted network average
based on estimated edge variances and/or scale parameters
improves upon a na\"ive unweighted sample mean of networks.
Further, experiments on real neuroimaging data showed that
the choice between weighted and unweighted network averaging has
consequences for downstream inference that cannot be ignored.

A most immediate avenue for future work is to pursue a more thorough
analysis of conditions  under which
weighted network averaging improves appreciably
upon unweighted averaging.   We are in the process of applying tools from random matrix theory to the multiple networks setting presented here.
Further afield, considering heavy-tailed distributions of network edges
is also of interest.   We also believe the techniques presented in the
present paper might be adapted to develop robust estimators
in network settings analogous to Huber's $\epsilon$-contamination model
\citep{Huber1964}.

{\bf Acknowledgements.} The authors acknowledge the support of the National Science Foundation, with KL and AL supported by DMS-1646108, and EL by  DMS-1916222. Additional support for KL was provided by the University of Wisconsin-Madison, Office of the Vice Chancellor for Research and Graduate Education with funding from the Wisconsin Alumni Research Foundation.


\newpage

\appendix

\section{sub-Gaussian and sub-gamma random variables}
\label{apx:taildefs}
For the sake of completeness, we state definitions and a few relevant facts on 
sub-Gaussian and sub-gamma random variables; these definitions are from \cite{BLM}, which can be consulted for a more thorough treatment.

\begin{definition}
  {\em
Let $Z$ be a random variable with $\E Z = 0$ and let
$\psi_Z(t) = \log \E e^{tZ}$ denote its cumulant generating function.
We say that $Z$ is {\em sub-Gaussian} with variance parameter $\nu \ge 0$
if for all $t \in \R$, $\psi_Z(t) \le t^2 \nu/2$.
}
\end{definition}

\begin{definition}
  {\em
Let $Z$ be a random variable with $\E Z = 0$ and let
$\psi_Z(t) = \log \E e^{tZ}$ denote its cumulant generating function.
Let $\nu,b \ge 0$.
We say that a random variable $Z$ is {\em sub-gamma} on the right tail
with parameter $(\nu,b)$ if $\psi_Z(t) \le \frac{t^2 \nu }{2(1-bt)}$
for all $t < 1/b$.
Similarly, we say that $Z$ is sub-gamma on the left tail
with parameter $(\nu,b)$ if $\psi_{-Z}(t) \le \frac{t^2 \nu}{2(1-bt)}$
for all $t < 1/b$.
If $Z$ is sub-gamma on both the left and the right tails with parameter
$(\nu,b)$, then we say that $Z$ is sub-gamma with parameter $(\nu,b)$,
and write that $Z$ is $(\nu,b)$-sub-gamma.
} \end{definition}

A basic property of sub-Gaussian random variables is that their sum is sub-Gaussian:  if $\{ Z_i \}_{i=1}^m$ are independent sub-Gaussian variables
with variance parameters $\nu_i$,
and $\{ \alpha_i \}_{i=1}^m$ are real numbers,
then $\sum_{i=1}^m \alpha_i Z_i$ is sub-Gaussian with variance parameter
$\sum_{i=1}^m \alpha_i^2 \nu_i$.

Similarly, if  $\{ Z_i \}_{i=1}^m$ are independent sub-gamma variables
 with parameters $(\nu_i,b_i)$ for all $i \in [m]$, $\sum_{i=1}^m \alpha_i Z_i$ is sub-gamma with parameter
$( \sum_{i=1}^m \alpha_i^2 \nu_i, \max_i |\alpha_i| b_i)$.

Some references use the term {\em sub-exponential} for the
tail behavior just defined. We instead reserve this term for the special case obtained by taking $\nu=\lambda^2$, $b=0$.
\begin{definition} {\em
A random variable $Z$ with $\E Z = 0$ is called  sub-exponential with parameter
$\lambda > 0$ if its MGF satisfies
$\E \exp\{ t Z \} \le \exp\{ t^2 \lambda^2 / 2 \}$
whenever $|t| \le 1/\lambda$.
} \end{definition}

If a random variable $Z$ is sub-Gaussian with parameter $\nu$, then
$Z^2-\E Z^2$ is sub-exponential with parameter $16\nu$; 
see, for example, Lemma 1.12 in the lecture notes by \cite{RigHut2017}.

\section{Proofs and Technical Results}
In what follows, we prove our main results.
\subsection{Proof of Lemma~\ref{lem:genericbound} (Concentration in $\tti$-norm)}
\label{apx:genericbound}
Lemma~\ref{lem:genericbound} 
generalizes and extends results of 
\cite{LyzTanAthParPri2017} and \cite{LevAthTanLyzPri2017}.
To begin with, we require two technical results.
The first is a modification of 
Proposition 16 in \cite{LyzTanAthParPri2017}
to be agnostic to the growth of the spectrum of $P$
instead of assuming a lower-bound on the growth rate of
the non-zero eigenvalues of $P$.
The proof is otherwise identical and is omitted.
We note that this result is entirely deterministic,
but we will apply it below when $M$ is a random matrix with expectation $P$.
\begin{proposition} \label{prop:innerprods}
Let $M,P \in \R^{n \times n}$
with $P = X X^T$ for some $X \in \R^{n \times d}$.
Let $P = \UP \SP \UP^T$ be the rank-$d$ singular value decomposition of $P$
and define $\UM \in \R^{n \times d}, \SM \in \R^{d \times d}$
so that $\UM \SM \UM^T$ is the rank-$d$ eigenvalue truncation of $M$.
Let $V_1 D V_2^T$ be the rank-$d$ singular value decomposition of $\UP^T \UM$.
Then
\begin{equation*}
  \| \UP^T \UM - V_1 V_2^T \|_F \le \frac{ d \| M - P \|^2 }{ \lambda_d^2(P) }.
\end{equation*}
\end{proposition}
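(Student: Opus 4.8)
The plan is to argue through the principal angles between the column spaces of $\UP$ and $\UM$, together with the Davis--Kahan $\sin\Theta$ theorem. Since $\UP,\UM \in \R^{n\times d}$ have orthonormal columns, the matrix $\UP^T\UM$ is $d\times d$, and its singular values $\sigma_1,\dots,\sigma_d$ (the diagonal of $D$) are exactly the cosines of the principal angles $\theta_1,\dots,\theta_d \in [0,\pi/2]$ between $\mathrm{col}(\UP)$ and $\mathrm{col}(\UM)$; in particular each $\sigma_i \in [0,1]$. Because $V_1,V_2$ are orthogonal and the Frobenius norm is unitarily invariant, $\|\UP^T\UM - V_1V_2^T\|_F = \|V_1(D-I)V_2^T\|_F = \|D-I\|_F$, so it suffices to bound $\bigl(\sum_{i=1}^d (1-\sigma_i)^2\bigr)^{1/2}$.

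The first key step is the elementary observation that $1 - \sigma_i = 1 - \cos\theta_i \le 1 - \cos^2\theta_i = \sin^2\theta_i$, valid since $\cos\theta_i \ge 0$. Squaring and summing gives $\|D-I\|_F^2 \le \sum_{i=1}^d \sin^4\theta_i \le \bigl(\sum_{i=1}^d \sin^2\theta_i\bigr)^2 = \|\sin\Theta\|_F^4$, i.e. $\|\UP^T\UM - V_1V_2^T\|_F \le \|\sin\Theta\|_F^2$, where $\sin\Theta = \mathrm{diag}(\sin\theta_1,\dots,\sin\theta_d)$. This is exactly what produces the quadratic dependence on $\|M-P\|$ in the statement: although the subspaces $\mathrm{col}(\UP)$ and $\mathrm{col}(\UM)$ move apart only to first order in the perturbation, $\UP^T\UM$ is within $O(\epsilon^2)$ of being orthogonal.

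The second step is to control $\|\sin\Theta\|_F$ by Davis--Kahan. Let $Q \in \R^{n\times(n-d)}$ be an orthonormal basis for the orthogonal complement of $\mathrm{col}(\UM)$, with $MQ = Q\Lambda$ for the diagonal matrix $\Lambda$ of the remaining $n-d$ eigenvalues of $M$; then $\|\sin\Theta\|_F = \|Q^T\UP\|_F$. Using $P = \UP\SP\UP^T$ and $Q^TP = Q^T\UP\SP\UP^T$ one obtains the Sylvester identity $\Lambda(Q^T\UP) - (Q^T\UP)\SP = Q^T(M-P)\UP$, whose solution satisfies $\|Q^T\UP\|_F \le \|Q^T(M-P)\UP\|_F / \mathrm{sep}(\Lambda,\SP)$. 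Since $P$ has rank exactly $d$, Weyl's inequality forces every entry of $\Lambda$ to have magnitude at most $\|M-P\|$, while every entry of $\SP$ is at least $\lambda_d(P)$, so the separation is $\lambda_d(P)$ up to lower-order terms; and $\|Q^T(M-P)\UP\|_F \le \|M-P\|\,\|\UP\|_F = \sqrt{d}\,\|M-P\|$. This yields $\|\sin\Theta\|_F \le \sqrt{d}\,\|M-P\|/\lambda_d(P)$, and substituting into the previous display gives $\|\UP^T\UM - V_1V_2^T\|_F \le d\|M-P\|^2/\lambda_d^2(P)$, as claimed.

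The main obstacle is the Davis--Kahan bookkeeping: one must confirm that the top-$d$ eigenspace $\UM$ of $M$ is well defined (its eigenvalues are separated from the tail, again by Weyl combined with $\rank P = d$) and, more delicately, that the relevant eigengap may be taken to be $\lambda_d(P)$ itself rather than $\lambda_d(P) - \|M-P\|$ — this is the sense in which the argument is ``agnostic to the growth of the spectrum of $P$,'' and it is handled by invoking the sharp form of Davis--Kahan (whose gap condition already reads $\lambda_d(P)$ when the complementary spectrum of $P$ is $\{0\}$) or, failing that, by noting that the asserted bound is vacuous unless $\|M-P\|$ is a small fraction of $\lambda_d(P)$. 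Everything else — the reduction to $\|D-I\|_F$, the cosine inequality, and the norm manipulations — is routine, and the argument is otherwise identical to that of Proposition~16 in \cite{LyzTanAthParPri2017} via the Davis--Kahan theorem.
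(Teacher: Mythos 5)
Your argument is correct and is essentially the paper's own: the paper omits the proof, deferring to Proposition 16 of \cite{LyzTanAthParPri2017}, whose Davis--Kahan argument proceeds exactly as you do --- reduce to $\|D-I\|_F$ by unitary invariance, use $1-\cos\theta_i \le \sin^2\theta_i$ to get $\|D-I\|_F \le \|\sin\Theta\|_F^2$, and control $\|\sin\Theta\|_F$ by a $\sin\Theta$ theorem. The only caveat is the one you flag yourself: the Sylvester/Davis--Kahan separation is really $\lambda_d(P)-\|M-P\|$ (or costs a factor of $2$ in the Yu--Wang--Samworth form), so your route delivers the stated bound only up to an absolute constant or under a condition such as $\|M-P\| \le c_0\lambda_d(P)$ --- which is harmless here, since every invocation of this proposition in the paper assumes exactly that condition and absorbs the constant into $C$.
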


Our second technical result is an adaptation of
Lemma 17 in \cite{LyzTanAthParPri2017}
and Lemma 4 in \cite{LevAthTanLyzPri2017},
again adapted to the case where no growth assumptions on $\lambda_d(P)$
are made.
The proof is a straight-foward application of Davis-Kahan style bounds
\citep[e.g., Theorem 2 in][]{YuWanSam2015}
and a modification of the argument in \cite{LevAthTanLyzPri2017},
and details are thus omitted.

\begin{proposition} \label{prop:Vapproxcomm}
Let $P \in \R^{n \times n}$
with $P = X X^T$ for some $X \in \R^{n \times d}$
and $P = \UP \SP \UP^T$ the rank-$d$ singular value decomposition of $P$.
Let $M \in \R^{n \times n}$ be random
with rank-$d$ eigenvalue truncation $\UM \SM \UM^T$,
where $\UM \in \R^{n \times d}, \SM \in \R^{d \times d}$.
Then
\begin{equation} \label{eq:projbound}
\| \UM - \UP \UP^T \UM \|_F \le \frac{C\sqrt{d} \| M - P \|}{\lambda_d(P)}.
\end{equation}
Further,
suppose that there exists a constant $c_0 \in [0,1)$ such that
with probability at least $p_0$, $\| M - P \| \le c_0 \lambda_d(P)$.
Let $V_1 D V_2^T$ be the rank-$d$ singular value decomposition of $\UP^T \UM$,
and define $V = V_1 V_2^T$.
Then with probability at least $p_0$,
\begin{equation} \label{eq:Vapproxcomm:1}
\| V \SM - \SP V \|_F
\le \frac{ C d \|M-P\|^2 \kappa(P) }{ \lambda_d(P) }
  + \| \UP^T(M-P)\UP \|_F
\end{equation}
\begin{equation} \label{eq:Vapproxcomm:2}
\| V \SM^{1/2} - \SP^{1/2} V \|_F
\le \frac{ C \| V \SM - \SP V \|_F }{ \lambda_d^{1/2}(P) } ~~~\text{ and }
\end{equation}
\begin{equation} \label{eq:Vapproxcomm:3}
\| V \SM^{-1/2} - \SP^{-1/2} V \|_F \le
  \frac{ C \| V \SM - \SP V \|_F }{ \lambda_d^{3/2}(P) }.
\end{equation}

\end{proposition}

We are now equipped to prove Lemma~\ref{lem:genericbound}.

\begin{delayedproof}{Proof of Lemma~\ref{lem:genericbound}}
Let $V \in \R^{d \times d}$ be the orthogonal matrix defined above
in Proposition~\ref{prop:Vapproxcomm}
and define the following three matrices:
\begin{equation*} \begin{aligned}
R_1 &= \UP \UP^T \UM - \UP V , \\
R_2 &= V \SM^{1/2} - \SP^{1/2} V  , \\
R_3 &= \UM - \UP \UP^T \UM + R_1 = \UM - \UP V.
\end{aligned} \end{equation*}
Adding and subtracting appropriate quantities,
\begin{equation*} \begin{aligned}
\UM \SM^{1/2} - \UP \SP^{1/2} V &= (M -P)\UP\SP^{-1/2}V
	+ (M-P)\UP(V\SM^{-1/2} - \SP^{-1/2} V) \\
  	&~~~+ \UP \UP^T (M - P)\UP V\SM^{-1/2}
		+ R_1 \SM^{1/2} + \UP R_2 \\
	&~~~+ (I-\UP\UP^T)(M-P)R_3 \SM^{-1/2}.
\end{aligned} \end{equation*}
Applying the triangle inequality and the fact that the Frobenius norm
is an upper bound on the $(\tti)$-norm, we have 
\begin{equation} \label{eq:bigsum}
\begin{aligned}
\| \UM &  \SM^{1/2} - \UP \SP^{1/2} V \|_{\tti}
\le \| (M -P)\UP\SP^{-1/2}V \|_{\tti} \\
  &+ \| (M-P)\UP(V\SM^{-1/2} - \SP^{-1/2} V) \|_F
  		+ \| \UP \UP^T (M - P)\UP V\SM^{-1/2} \|_F \\
  &+ \| (I-\UP\UP^T)(M-P)R_3 \SM^{-1/2} \|_F
	+ \| R_1 \SM^{1/2} \|_F + \| \UP R_2 \|_F.
\end{aligned}  \end{equation}
We will bound each of these summands in turn.
Firstly, by definition of the spectral and $(\tti)$ norms,
\begin{equation} \label{eq:bigsum:term1}
  \| (M -P)\UP\SP^{-1/2}V \|_{\tti}
  \le \| (M -P)\UP \|_{\tti} \| \SP^{-1/2} \|
  \le \frac{ \| (M -P)\UP \|_{\tti} }{ \lambda_d^{1/2}(P) }.
\end{equation}
The second term on the right-hand side of~\eqref{eq:bigsum}
is bounded as
\begin{equation*} \begin{aligned}
\| (M-P)\UP(V\SM^{-1/2} - \SP^{-1/2} V) \|_F
&\le \| M - P \| \| \UP \| \| V\SM^{-1/2} - \SP^{-1/2} V \|_F \\
&= \| M - P \| \| V\SM^{-1/2} - \SP^{-1/2} V \|_F,
\end{aligned} \end{equation*}
from which Proposition~\ref{prop:Vapproxcomm} implies
that with probability at least $p_0$,
\begin{equation} \label{eq:bigsum:term2}
\begin{aligned}
 \| (M-P)&\UP(V\SM^{-1/2} - \SP^{-1/2} V) \|_F 
\le \frac{ C \| M - P \| \| V \SM - \SP V \|_F }
        { \lambda_d^{3/2}(P) }.
\end{aligned} \end{equation}

By the assumption in Equation~\eqref{eq:assum:specbound},
with probability at least $p_0$,
\begin{equation} \label{eq:SMbound}
\| \SM^{-1/2} \| \le \left( \lambda_d(P) - \|M-P\| \right)^{-1/2}
\le C\lambda_d^{-1/2}(P).
\end{equation}
Thus, the third term on the right-hand side of~\eqref{eq:bigsum} satisfies
\begin{equation} \label{eq:bigsum:term3}
\begin{aligned}
\| \UP \UP^T (M &- P)\UP V\SM^{-1/2} \|_F
  \le \| \UP \| \| \UP^T (M - P)\UP \|_F \| V \| \| \SM^{-1/2} \| \\
 &\le \| \UP^T (M - P)\UP \|_F \| \SM^{-1/2} \| \\
& \le \frac{ \| \UP^T(M - P)\UP \|_F }
        { \sqrt{ \lambda_d(P) - \|M-P\| } }
\le \frac{ C\| \UP^T(M - P)\UP \|_F }{ \lambda_d^{1/2}(P) },
\end{aligned} \end{equation}
where the penultimate inequality follows from Equation~\eqref{eq:SMbound}
and the last inequality holds with probability at least $p_0$ by
the assumption in Equation~\eqref{eq:assum:specbound}. 

Considering the fourth term on the
right-hand side of~\eqref{eq:bigsum}, recall that $R_3 = \UM - \UP V$,
so that by adding and subtracting appropriate quantities we have
\begin{equation} \label{eq:bigsum:term4:expand}
\begin{aligned}
(I-\UP\UP^T)&(M-P)R_3 \SM^{-1/2} \\
&= (I-\UP\UP^T)(M-P)(\UM - \UP \UP^T \UM)\SM^{-1/2} \\
 &~~~~~~+ (I-\UP\UP^T)(M-P)((I-\UP\UP^T)(M-P) - \UP V)\SM^{-1/2}.
\end{aligned} \end{equation}
The former of these quantities is bounded as
\begin{equation*} \begin{aligned}
\| (I&-\UP\UP^T)(M-P)(\UM - \UP \UP^T \UM)\SM^{-1/2} \|_F \\
&\le \| I-\UP\UP^T \| \| M - P \| \| \UM - \UP\UP^T\UM \|_F \| \SM^{-1/2} \|
\le \frac{ C \sqrt{d} \| M - P \|^2 }{ \lambda_d^{3/2}(P) }
\end{aligned} \end{equation*}
where we have used Equations~\eqref{eq:projbound} and~\eqref{eq:SMbound}
to obtain the second inequality.
The second term on the right-hand side of~\eqref{eq:bigsum:term4:expand}
is bounded by Proposition~\ref{prop:innerprods}
and Equation~\eqref{eq:SMbound} as
\begin{equation*} \begin{aligned}
\| (I-\UP\UP^T)&(M-P)((I-\UP\UP^T)(M-P) - \UP V)\SM^{-1/2} \|_F \\
&\le \|I-\UP\UP^T \| \|M-P \| \| \UP \| \| \UP^T\UM - V \|_F \| \SM^{-1/2} \|
\le \frac{ C d \| M-P \|^3 }{ \lambda_d^{5/2}(P) },
\end{aligned} \end{equation*}
Thus, combining the above two displays with
Equation~\eqref{eq:bigsum:term4:expand},
\begin{equation} \label{eq:bigsum:term4}
\begin{aligned}
\| (I-\UP\UP^T)&(M-P)R_3 \SM^{-1/2} \|_F \\
&\le \frac{ C \sqrt{d} \| M - P \|^2 ( \lambda_d(P) + \sqrt{d}\| M-P \| ) }
        { \lambda_d^{5/2}(P) }
\le \frac{ C d \| M-P \|^2 }{ \lambda_d^{3/2}(P) },
\end{aligned} \end{equation}
where the last inequality holds with probability at least $p_0$
by the assumption in Equation~\eqref{eq:assum:specbound}.

Finally, we bound the last two summands on the right-hand side
of~\eqref{eq:bigsum}.
Recall that $R_1 = \UP \UP^T \UM - \UP V = \UP( \UP^T \UM - V)$,
Proposition~\ref{prop:innerprods} yields
$$ \| R_1 \|_F \le \| \UP \| \| \UP^T \UM - V \|_F
  \le \frac{ C d \| M - P \|^2 }{ \lambda_d^2( P ) }, $$
whence by the assumption in Equation~\eqref{eq:assum:specbound},
\begin{equation} \label{eq:bigsum:term5}
\| R_1 \SM^{1/2} \|_F \le \| R_1 \|_F \| \SM^{1/2} \|
\le \frac{ C d \| M - P \|^2 \lambda_1^{1/2}(P) }
	{ \lambda_d^2(P) }.
\end{equation}
Recalling $R_2 = V \SM^{1/2} - \SP^{1/2} V$,
Proposition~\ref{prop:Vapproxcomm} implies that
\begin{equation} 
\| R_2 \|_F \le \frac{ C \| V \SM - \SP V \|_F }{ \lambda_d^{1/2}(P) }.
\end{equation}
Applying this bound along with 
Equations~\eqref{eq:bigsum:term1},~\eqref{eq:bigsum:term2},
~\eqref{eq:bigsum:term3},~\eqref{eq:bigsum:term4} and~\eqref{eq:bigsum:term5}
to Equation~\eqref{eq:bigsum}, collecting terms
and making use of the assumption in Equation~\eqref{eq:assum:specbound} once more,
\begin{equation*} \begin{aligned}
\| \UM \SM^{1/2} - \UP \SP^{1/2} V \|_{\tti} 
&\le
\frac{ \| (M -P)\UP \|_{\tti} }{ \lambda_d^{1/2}(P) }
+
\frac{ C \| V \SM - \SP V \|_F }{ \lambda_d^{1/2}(P) }  \\
&~~~~~~+ \frac{ C \| \UP^T(M-P) \UP \|_F }{ \lambda_d^{1/2}(P) }
+ \frac{ Cd\|M-P\|^2 }{ \lambda_d^{3/2}(P) }
	\left( 1 + \kappa^{1/2}(P) \right).
\end{aligned} \end{equation*}
Applying Proposition~\ref{prop:Vapproxcomm} to
$\| V \SM - \SP V \|_F$,
\begin{equation*} \begin{aligned}
\| \UM \SM^{1/2} - \UP \SP^{1/2} V \|_{\tti} 
&\le \frac{ \| (M -P)\UP \|_{\tti} }{ \lambda_d^{1/2}(P) }
+ \frac{ C \| \UP^T(M-P)\UP \|_F }{ \lambda_d^{1/2}(P) } \\
&~~~~~~+ \frac{ Cd\|M-P\|^2 }{\lambda_d^{3/2}(P)}
  \left( 1 + \kappa^{1/2}(P) + \kappa(P) \right).
\end{aligned} \end{equation*}
Noting that $\kappa(P) \ge 1$ completes the proof.
\end{delayedproof}

\subsection{Proof of Theorem~\ref{thm:subexpXtilde} (Concentration under sub-gamma assumptions)}
\label{apx:subexpXtilde}
Using the results presented above, we are now able to prove Lemma~\ref{lem:mxspecbound} and Theorem~\ref{thm:subexpXtilde}.
Our proof of Theorem~\ref{thm:subexpXtilde} relies on applying Lemma~\ref{lem:genericbound} to the case of $M = \Atilde$.
This in turn requires a bound on the spectral norm error that is central to Lemma~\ref{lem:genericbound}.
This spectral norm error is controlled by Lemma~\ref{lem:mxspecbound},
the proof of which relies on the following  matrix Bernstein bound.

\begin{theorem}[\cite{Tropp2012} Theorem 6.2]
\label{thm:Tropp2012:Bern}
Let $\{ Z_k \}$ be a finite sequence of independent, random, self-adjoint
$n$-by-$n$ matrices each satisfying $\E Z_k = 0$ and
$ \E Z_k^p \preceq p! R^{p-2} M_k^2/2$ for $p=2,3,\dots$,
where $R \in \R$ and $\{ M_k \} \subset \R^{n \times n}$
are deterministic matrices
and $\preceq$ denotes the semidefinite ordering,
Define $\mxbernsymbol^2 = \| \sum_k M_k^2 \|$. Then for all $t \ge 0$,
\begin{equation*}
\Pr\left[ \Big\| \sum_k Z_k \Big\| \ge t \right]
        \le n \exp\left\{ \frac{ -t^2 }{  2\mxbernsymbol^2 + 2Rt } \right\}.
\end{equation*}
\end{theorem}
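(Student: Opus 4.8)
The plan is to prove this by the matrix Laplace transform (Chernoff) method. I would first reduce to a one‑sided statement: it suffices to bound $\Pr[\lambda_{\max}(\sum_k Z_k) \ge t]$, since the spectral‑norm tail follows by applying the same estimate to $\{-Z_k\}$. For the one‑sided bound, fix $\theta > 0$; by the spectral mapping theorem and the fact that the largest eigenvalue of a positive semidefinite matrix is at most its trace, $e^{\theta \lambda_{\max}(\sum_k Z_k)} = \lambda_{\max}(e^{\theta \sum_k Z_k}) \le \tr\, e^{\theta \sum_k Z_k}$, so Markov's inequality gives
\begin{equation*}
\Pr\left[ \lambda_{\max}\Big(\sum_k Z_k\Big) \ge t \right] \le e^{-\theta t}\, \E\, \tr\exp\Big(\theta \sum_k Z_k\Big).
\end{equation*}

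The crux — and the step I expect to be the main obstacle — is decoupling the independent summands inside the trace exponential, since the matrix exponential does not factor over sums of non‑commuting matrices. The tool is Lieb's concavity theorem (concavity of $A \mapsto \tr\exp(H + \log A)$ on the positive definite cone), which, combined with Jensen's inequality and induction on the number of terms, yields the subadditivity of the matrix cumulant generating function, $\E\, \tr\exp(\theta \sum_k Z_k) \le \tr\exp(\sum_k \log \E\, e^{\theta Z_k})$. Everything past this point is routine bookkeeping.

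The remaining steps are: (i) bound each matrix MGF by expanding $\E\, e^{\theta Z_k} = I + \sum_{p\ge 2} \frac{\theta^p}{p!}\E Z_k^p$ (the $p=1$ term vanishes since $\E Z_k = 0$), inserting the hypothesis $\E Z_k^p \preceq p! R^{p-2} M_k^2/2$, and summing the geometric series to obtain $\E\, e^{\theta Z_k} \preceq I + \frac{\theta^2/2}{1-\theta R} M_k^2$ for $0 < \theta < 1/R$; (ii) use $I + A \preceq e^A$ to pass to $\log \E\, e^{\theta Z_k} \preceq \frac{\theta^2/2}{1-\theta R} M_k^2$, sum over $k$, and invoke monotonicity of $A \mapsto \tr\, e^A$ under the semidefinite order together with $\tr\, e^{cB} \le n\, e^{c\|B\|}$ for $B \succeq 0$, $c \ge 0$, to get $\tr\exp(\sum_k \log\E\, e^{\theta Z_k}) \le n\exp(\frac{\theta^2 \mxbernsymbol^2/2}{1-\theta R})$ with $\mxbernsymbol^2 = \|\sum_k M_k^2\|$; and (iii) assemble the pieces into $\Pr[\lambda_{\max}(\sum_k Z_k) \ge t] \le n\exp(-\theta t + \frac{\theta^2\mxbernsymbol^2/2}{1-\theta R})$, valid for every $\theta \in (0, 1/R)$, and take $\theta = t/(\mxbernsymbol^2 + Rt)$, which lies in $(0, 1/R)$ and satisfies $1 - \theta R = \mxbernsymbol^2/(\mxbernsymbol^2 + Rt)$, so the exponent collapses to $-t^2/(2\mxbernsymbol^2 + 2Rt)$. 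A final pass applying this to $\{-Z_k\}$ yields the stated bound for $\|\sum_k Z_k\|$. The only genuinely substantive ingredient is Lieb's theorem in the middle paragraph; steps (i)–(iii) are elementary operator‑inequality manipulations and a scalar optimization.
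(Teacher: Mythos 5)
The paper does not prove this statement---it is imported directly from \cite{Tropp2012} (Theorem 6.2, restated for the spectral norm)---so there is no internal proof to compare against; your sketch is a correct reconstruction of Tropp's own argument, in the same order: the matrix Laplace transform, Lieb's concavity theorem to get subadditivity of the matrix cumulant generating function, the geometric-series bound on each matrix MGF under the Bernstein moment condition, and the choice $\theta = t/(\mxbernsymbol^2 + Rt)$, which does collapse the exponent to $-t^2/(2\mxbernsymbol^2 + 2Rt)$. The only quibble is that your final symmetrization over $\{\pm Z_k\}$ yields a prefactor of $2n$ for the two-sided spectral-norm tail, whereas Tropp's original statement is one-sided with prefactor $n$ and the paper's restatement silently drops the factor of $2$; this is immaterial to every use of the bound in the paper.
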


\begin{delayedproof}{Proof of Lemma~\ref{lem:mxspecbound}}
To apply Theorem~\ref{thm:Tropp2012:Bern}, we first decompose
$\sum_{s=1}^N w_s \AA{s}-P$
into a sum of independent zero-mean symmetric matrices.
Letting $\{e_1,e_2,\dots,e_n\}$ denote the standard basis vectors,
define, for all $1 \le i \le j \le n$ the $n$-by-$n$ matrices
\begin{equation} \label{eq:Bmx}
 \bB_{i,j} =
   \begin{cases} e_i e_j^T + e_j e_i^T & \mbox{ if } 1 \le i < j \le n, \\
                e_i e_i^T &\mbox{ if } i = j, \end{cases}
\end{equation}
and the $n$-by-$n$ random matrices
$\bZ_{s,i,j} = (w_s \AA{s}-P)_{i,j} \bB_{i,j}$ for all $1 \le i \le j \le n$
and all $s \in [N]$.
We bold these matrices in this proof to remind the reader
that indexing by $s,i,j$ is specifying one of $O(N n^2)$ matrices,
rather than a matrix entry.
The set $\{ \bZ_{s,i,j} : s \in [N], 1 \le i \le j \le n \}$ sum to
$\sum_s w_s \AA{s}-P$, and
satisfy the symmetry, independence and unbiasedness assumptions required of
Theorem~\ref{thm:Tropp2012:Bern}.
By the assumption that $(\AA{s}-P)_{i,j}$ is
$(\nu_{s,i,j},b_{s,i,j})$-sub-gamma,
we have that $w_s(\AA{s}-P)_{i,j}$ is
$(w_s^2 \nu_{s,i,j}, w_s b_{s,i,j})$-sub-gamma. 
A moment-bounding argument
similar to that in Theorem 2.3 of~\cite{BLM} yields that
\begin{equation*} \begin{aligned}
\E |w_s &(\AA{s} - P)_{i,j}|^p
= \int_0^\infty p u^{p-1} \Pr[ |w_s(\AA{s}-P)|_{i,j} > u ] du \\
&\le 2p\int_0^\infty \left(\sqrt{2w_s^2 \nu_{s,i,j} t} + w_s b_{s,i,j}t\right)^{p-1}
          \frac{e^{-t}(w_s\sqrt{2\nu_{s,i,j} t} + 2w_s b_{s,i,j} t)}{2t} dt \\
&\le 2^{p-1} p w_s^p \int_0^\infty \big[\big(2\nu_{s,i,j}\big)^{p/2} t^{p/2 - 1} + (2 b_{s,i,j})^p t^{p-1}\big] e^{-t} dt  \\
&= 2^{p-1} p w_s^p [ (2\nu_{s,i,j})^{p/2} \Gamma(p/2) + (2b_{s,i,j})^p \Gamma(p) ] \\
&\le 2^p p! \, w_s^p (\sqrt{2 \nu_{s,i,j}} + 2b_{s,i,j})^p.
\end{aligned} \end{equation*}

Thus, defining $\eta_{s,i,j} = 2(\sqrt{2\nu_{s,i,j}} + 2b_{s,i,j})$
and $R = \max_{s,i,j} \eta_{s,i,j}$, we have
\[
\E \bZ_{s,i,j}^p = \E \left[ w_s(\AA{s}-P)_{i,j} \right]^p \bB_{i,j}^p
        \preceq p! w_s^p \eta_{s,i,j}^p \bB_{i,j}^p
        \preceq \frac{p!}{2} R^{p-2} (\sqrt{2} w_s \eta_{s,i,j} \bB_{i,j})^2, 
\]
since $\bB_{i,j}^p = \bB_{i,j}^{2 - \lfloor p/ 2\rfloor} \preceq \bB_{i,j}^2$.
Taking the bounding matrices $M_k$ in Theorem~\ref{thm:Tropp2012:Bern}
to be
$\{ \sqrt{2} w_s \eta_{s,i,j} \bB_{i,j} : 1 \le s \le N,
					 1 \le i \le j \le n\}$,
$\mxbernsymbol$ in Theorem~\ref{thm:Tropp2012:Bern} becomes
\begin{equation*} \begin{aligned}
\mxbernsymbol^2 &= \left\| \sum_{s=1}^N w_s^2
		\sum_{1 \le i \le j \le n} 2\eta_{s,i,j}^2 \bB_{i,j}^2
		\right\| \\
        &= 2 \left\| \sum_{s=1}^N \sum_{i=1}^n w_s^2 \eta_{s,i,i}^2 e_ie_i^T
        + \sum_{s=1}^N \sum_{1 \le i < j \le n}
		w_s^2 \eta_{s,i,j}^2 (e_i e_i^T + e_j e_j^T) \right\| \\
  &= 2 \max_i \sum_{s=1}^N \sum_{j=1}^n w_s^2 \eta_{s,i,j}^2.
\end{aligned} \end{equation*}
Applying Theorem~\ref{thm:Tropp2012:Bern}, we have
\begin{equation*}
\Pr\left[ \bigg\| \sum_{s=1}^N \sum_{1 \le i \le j \le n} \bZ_{s,i,j} \bigg\|
		\ge t \right]
 \le n\exp\left\{ \frac{ -t^2/2 }{ \mxbernsymbol^2 + R t } \right\}.
\end{equation*}
Taking $t = 3(\sqrt{2\mxbernsymbol^2} + 3R)\log n$ is enough to ensure that
$ t^2 \ge 6(\mxbernsymbol^2 + Rt)\log n$, so that
\begin{equation*}
\Pr\left[ \left\| \sum_{s=1}^N \sum_{1 \le i \le j \le n}
		\bZ_{s,i,j} \right\|
		\ge
 		3(\sqrt{2\mxbernsymbol^2} + 3R)\log n
	\right]
        \le n^{-2}.
\end{equation*}
Note that we have
\begin{equation*}
R = \max_{i,j,s} \eta_{s,i,j} w_s 
= \sqrt{ \max_{i,j,s} (\eta_{s,i,j} w_s )^2 }
\le \sqrt{ \max_{i \in [n]}
	\sum_{j=1}^n \sum_{s=1}^N (\eta_{s,i,j} w_s)^2 }
= \sqrt{\mxbernsymbol^2 / 2},
\end{equation*}
so our upper bound can be further simplified to
\begin{equation*}
 \left\| \sum_{s=1}^N w_s \AA{s} - P \right\|
\le \frac{ 15 \sqrt{2\mxbernsymbol^2} }{ 2 }\log n,
\end{equation*}
which completes the proof.
\end{delayedproof}

Recall that our aim in proving Theorem~\ref{thm:subexpXtilde}
 is to establish a bound on the
$(2,\infty)$-norm of $\Xtilde - XV$,
for a suitably-chosen orthogonal matrix $V \in \R^{d \times d}$,
where $\Atilde = \sum_{s=1}^N w_s \AA{s}$ and $\Xtilde = \ASE( \Atilde, d)$.
By Lemma~\ref{lem:genericbound}, in order to bound this norm,
it suffices to control $\|\Atilde-P\|$, $\|\UP^T(\Atilde-P)\UP\|_F$,
$\| (\Atilde-P)\UP \|_{\tti}$ and the spectrum of $P$.
Lemma~\ref{lem:mxspecbound} controls the first of these terms,
while Propositions~\ref{prop:UTAPU} and~\ref{prop:APtti}
control the Frobenius and $(2,\infty)$-norms, respectively.

\begin{proposition} \label{prop:UTAPU}
Let $\{ \AA{s} \}_{s=1}^N$ be independent and for all $s=1,2,\dots,N$,
$\{ (\AA{s}-P)_{i,j} : 1 \le i \le j \le n \}$ are independent
and $(\AA{s}-P)_{i,j}$ is $(\nu_{s,i,j}, b_{s,i,j})$-sub-gamma.
Assume that $P = \E \AA{s}$ is rank $d$ with $d = O(n)$,
and let $U_{j,k}$ denote the $(j,k)$-element
of the matrix $\UP \in \R^{n \times d}$ of eigenvectors of $P$
with non-zero eigenvalues.
Define for all $k,\ell \in [d]$ the quantities
\begin{equation*}
\nubar_{k,\ell} = \sum_{s=1}^N \sum_{i=1}^n \sum_{j=1}^n
		w_s^2 U_{i,k}^2 U_{j,\ell}^2 \nu_{s,i,j},
~~~~~~
\bbar_{k,\ell} = \max_{s\in[N],i,j \in [n]} w_s |U_{i,k} U_{j,\ell}| b_{s,i,j}.
\end{equation*}
Then for any fixed collection of weights
$\{ w_s \}_{s=1}^N$ with $w_s \ge 0$ and $\sum_{s=1}^N w_s = 1$,
\begin{equation*}
\left\| \UP^T \left(\sum_{s=1}^N w_s \AA{s}-P \right)\UP \right\|_F \le
2\left( \sum_{k=1}^d \sum_{\ell=1}^d
	(\sqrt{2\nubar_{k,\ell}} + 2\bbar_{k,\ell})^2
  \right)^{1/2} \log n
\end{equation*}
Further, when the top eigenvectors of $P$ delocalize so that
$|U_{i,k}| \le Cn^{-1/2}$ for all $i \in [n], k \in [d]$,
it holds with probability at least $1-Cn^{-2}$ that
\begin{equation*}
\left\| \UP^T\left(\sum_{s=1}^N w_s \AA{s}-P \right)\UP \right\|_F \le
\frac{ Cd }{n} \left( \sum_{s=1}^N \sum_{i=1}^n \sum_{j=1}^n w_s^2 \nu_{s,i,j}
	+ 2\max_{s \in [N],i,j \in[n]} w_s^2 b_{s,i,j}^2 \right)^{1/2}
	\log n.
\end{equation*}
\end{proposition}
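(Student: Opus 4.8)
The plan is to reduce the bound on $\|\UP^T(\sum_s w_s\AA{s}-P)\UP\|_F$ to an entrywise analysis of the $d\times d$ matrix $Z := \UP^T(\sum_{s=1}^N w_s\AA{s}-P)\UP$. Since $\sum_{s=1}^N w_s = 1$, we may write $\sum_s w_s\AA{s}-P = \sum_s w_s(\AA{s}-P)$, so the $(k,\ell)$ entry is $Z_{k\ell} = \sum_{s=1}^N w_s\sum_{i,j}U_{ik}U_{j\ell}(\AA{s}-P)_{ij}$. First I would rewrite this as a linear combination of the independent sub-gamma variables $\{(\AA{s}-P)_{ij}: s\in[N],\,1\le i\le j\le n\}$: the coefficient of $(\AA{s}-P)_{ij}$ is $w_s(U_{ik}U_{j\ell}+U_{jk}U_{i\ell})$ when $i<j$ and $w_sU_{ik}U_{i\ell}$ when $i=j$, where the symmetry of $\AA{s}-P$ (hence $\nu_{s,ij}=\nu_{s,ji}$ and $b_{s,ij}=b_{s,ji}$) is used to fold the $i>j$ terms onto the $i<j$ ones. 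Observation~\ref{obs:subgamma} then shows $Z_{k\ell}$ is sub-gamma, and using $(a+b)^2\le 2a^2+2b^2$ on the off-diagonal coefficients together with $\sum_i U_{ik}^2=1$ one checks that its variance parameter is at most $2\nubar_{k,\ell}$ and its scale parameter at most $2\bbar_{k,\ell}$.

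For the first inequality I would apply the standard sub-gamma deviation bound \citep[Chapter 2]{BLM}, namely $\Pr[|Z_{k\ell}|\ge \sqrt{2(2\nubar_{k,\ell})\lambda}+2\bbar_{k,\ell}\lambda]\le 2e^{-\lambda}$, with $\lambda=c\log n$ for a suitable absolute constant $c$; a union bound over the $d^2\le n^2$ entries (using $d=O(n)$) puts us on an event of probability at least $1-Cn^{-2}$ on which, after the estimate $\sqrt{\log n}\le\log n$ for $n$ large, every entry satisfies $|Z_{k\ell}|\le 2(\sqrt{2\nubar_{k,\ell}}+2\bbar_{k,\ell})\log n$. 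Summing squares then gives $\|Z\|_F=(\sum_{k,\ell}Z_{k\ell}^2)^{1/2}\le 2(\sum_{k,\ell}(\sqrt{2\nubar_{k,\ell}}+2\bbar_{k,\ell})^2)^{1/2}\log n$, which is the asserted bound.

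For the second inequality I would feed the delocalization hypothesis $|U_{ik}|\le Cn^{-1/2}$ into the definitions of $\nubar_{k,\ell}$ and $\bbar_{k,\ell}$: this yields $\nubar_{k,\ell}\le (C/n^2)\sum_s\sum_{i,j}w_s^2\nu_{s,ij}$ and $\bbar_{k,\ell}\le (C/n)\max_{s,i,j}w_sb_{s,ij}$ uniformly in $k,\ell$, so that $(\sqrt{2\nubar_{k,\ell}}+2\bbar_{k,\ell})^2\le (C/n^2)\big(\sum_s\sum_{i,j}w_s^2\nu_{s,ij}+2\max_{s,i,j}w_s^2 b_{s,ij}^2\big)$; substituting into the first bound, pulling the $d^2$ out of the sum over $(k,\ell)$, and taking the square root delivers the claimed $(Cd/n)\big(\sum_s\sum_{i,j}w_s^2\nu_{s,ij}+2\max_{s,i,j}w_s^2b_{s,ij}^2\big)^{1/2}\log n$.

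The step I expect to be most delicate is the bookkeeping in the first paragraph: correctly identifying which scalar coefficient multiplies each of the $O(Nn^2)$ genuinely independent variables $(\AA{s}-P)_{ij}$ (so the symmetric pair $(i,j)$/$(j,i)$ is counted exactly once), and then tracking the constant factors in passing from the true sub-gamma parameters of $Z_{k\ell}$ to the deliberately loose surrogates $\nubar_{k,\ell},\bbar_{k,\ell}$, while arranging that the choice $\lambda=c\log n$ simultaneously produces a per-entry bound of the advertised shape and a union-bound failure probability small enough after multiplication by $d^2$. None of this is deep, but it is where an off-by-a-constant slip is easiest to make.
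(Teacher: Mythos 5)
Your proposal is correct and follows essentially the same route as the paper: entrywise sub-gamma concentration for each $Z_{k\ell}$ via Observation~\ref{obs:subgamma} and a Bernstein-type tail bound at level $\lambda \asymp \log n$, a union bound over the $d^2 \le Cn^2$ entries, and then substitution of the delocalization bound into $\nubar_{k,\ell}$ and $\bbar_{k,\ell}$. Your explicit folding of the $(i,j)/(j,i)$ pairs is in fact slightly more careful than the paper's treatment (which sums over all $i,j$ and invokes independence directly), at the cost only of an absolute constant that is immaterial to the statement and its downstream uses.
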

\begin{proof}
Fix $k,\ell \in [d]$ and consider
$[ \UP^T(\sum_{s=1}^N w_s \AA{s}-P)\UP ]_{k,\ell}$.
Then
\begin{equation} \label{eq:klsum}
\left[ \UP^T\left(\sum_{s=1}^N w_s \AA{s}-P\right)\UP \right]_{k,\ell}
= \sum_{i=1}^n \sum_{j=1}^n \sum_{s=1}^N
	w_s(\AA{s}-P)_{i,j} U_{i,k} U_{j,\ell},
\end{equation}
is a sum of independent mean-zero random variables.
Since $(\AA{s}-P)_{i,j}$ is $(\nu_{s,i,j}, b_{s,i,j})$-sub-gamma,
$w_s (\AA{s}-P)_{i,j} U_{i,k} U_{j,\ell}$ is
$( w_s^2 U_{i,k}^2 U_{j,\ell}^2 \nu_{s,i,j},
	w_s |U_{i,k} U_{j,\ell}| b_{s,i,j} )$-sub-gamma
Applying a standard Bernstein inequality 
to the quantity in Equation~\eqref{eq:klsum} thus yields that
\begin{equation*}
\Pr\left[ | \UP^T(A-P)\UP |_{k,\ell} > t \right]
\le 2\exp\left\{ -\frac{ \nubar_{k,\ell} }{ \bbar_{k,\ell}^2 }
	\left(1 + \frac{\bbar_{k,\ell} t}{\nubar_{k,\ell}}
	- \sqrt{1 + \frac{2\bbar_{k,\ell} t}{ \nubar_{k,\ell} } } \right) \right\},
\end{equation*}
where
\begin{equation*}
\nubar_{k,\ell} = \sum_{s=1}^N \sum_{i=1}^n \sum_{j=1}^n
			w_s^2 U_{i,k}^2 U_{j,\ell}^2 \nu_{s,i,j},
~~~~~~
\bbar_{k,\ell} = \max_{s\in[N], i,j \in [n]}
			w_s |U_{i,k} U_{j,\ell}| b_{s,i,j}.
\end{equation*}
Taking $t = 2(\sqrt{2\nubar_{k,\ell} } + 2\bbar_{k,\ell})\log n$
is enough to ensure that
$ | \UP^T(A-P)\UP |_{k,\ell} > t$ with probability at most $2n^{-4}$.
A union bound over all $k,\ell \in [d]$ yields
\begin{equation} \label{eq:frobsquared}
\| \UP^T(A-P)\UP \|_F^2
\le 4 \sum_{k=1}^d \sum_{\ell=1}^d
	(\sqrt{2\nubar_{k,\ell}} + 2\bbar_{k,\ell})^2 \log^2 n,
\end{equation}
with probability at least $1 - 2d^2n^{-4} \ge 1 - Cn^{-2}$,
owing to our assumption that $d = O(n)$.
Taking square roots in Equation~\eqref{eq:frobsquared}
yields the first claim.

When the eigenvectors of $P$ delocalize, we have
\begin{equation*}
\nubar_{k,\ell}
= \sum_{s=1}^N \sum_{i=1}^n \sum_{j=1}^n
	w_s^2 U_{i,k}^2 U_{j,\ell}^2 \nu_{s,i,j}
\le \frac{C^2}{n^2} \sum_{s=1}^N \sum_{i=1}^n \sum_{j=1}^n
	w_s^2 \nu_{s,i,j},
\end{equation*}
and
\begin{equation*}
\bbar_{k,\ell} = \max_{s\in[N],~i,j\in[n]} w_s |U_{i,k} U_{j,\ell}| b_{s,i,j}
	\le \frac{ C \max_{s\in[N],~i,j\in[n]} w_s b_{s,i,j} }{ n },
\end{equation*}
whence we have
\begin{equation*} \begin{aligned}
\| \UP^T(A-P)\UP \|_F^2
&\le C^2 \sum_{k=1}^d \sum_{\ell=1}^d n^{-2}
	\left( 
	\sqrt{2 \sum_{s=1}^N \sum_{i=1}^n \sum_{j=1}^n
			w_s^2 \nu_{s,i,j} }
		+ 2 \max_{s,i,j} w_s b_{s,i,j} 
	\right)^2 \log^2 n \\
&\le C^2\left( 4d^2n^{-2} \sum_{s,i,j} w_s^2 \nu_{s,i,j}
	+ 8d^2n^{-2} \max_{s,i,j} w_s^2 b_{s,i,j}^2 \right) \log^2 n \\
&= \frac{ d^2C^2}{n^2} \left( \sum_{s,i,j} w_s^2 \nu_{s,i,j} + 2\max_{s,i,j} w_s^2 b_{s,i,j}^2 \right) \log^2 n,
\end{aligned} \end{equation*}
and taking square roots completes the proof.
\end{proof}

\begin{proposition} \label{prop:APtti}
Let $\{ \AA{s} \}_{s=1}^N$ be independent and for all $s=1,2,\dots,N$,
$\{ (\AA{s}-P)_{i,j} : 1 \le i \le j \le n \}$ are independent
and $(\AA{s}-P)_{i,j}$ is $(\nu_{s,i,j}, b_{s,i,j})$-sub-gamma.
Assume that $P = \E \AA{s}$ is rank $d$ with $d = O(n)$,
and let $U_{j,k}$ denote the $(j,k)$-element
of the matrix $\UP \in \R^{n \times d}$ of eigenvectors of $P$
with non-zero eigenvalues.
Define for all $i \in [n]$ and $k \in [d]$ the quantities
\begin{equation*} \label{eq:def:nubstar}
\nustar_{i,k} = \sum_{s=1}^N \sum_{j=1}^n w_s^2 U_{j,k}^2 \nu_{s,i,j},
~~~~~~
\bstar_{i,k} =  \max_{s\in[N],j \in [n]} w_s|U_{j,k}| b_{s,i,j}.
\end{equation*}
Then with probability at least $1-Cn^{-2}$,
\begin{equation*}
\| (\Atilde-P)\UP \|_{\tti} \le
	2 \max_{i\in [n]} \left(
 	\sum_{k=1}^d \left(\sqrt{2\nustar_{i,k}} + 2\bstar_{i,k}\right)^2
	\right)^{1/2} \log n
\end{equation*}
If, in addition, the top eigenvectors of $P$ delocalize,
\begin{equation*}
\| (A-P)\UP \|_{\tti} \le 
\frac{ C d^{1/2} }{ \sqrt{n} }
\max_{i \in [n]} \left( \sum_{s=1}^N \sum_{j=1}^n w_s^2 \nu_{s,i,j}
           + \max_{s\in[N], j \in [n]} w_s^2 b_{s,i,j}^2 \right)^{1/2} \log n
\end{equation*}
also with probability at least $1-Cn^{-2}$,
\end{proposition}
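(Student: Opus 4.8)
The plan is to follow the same template as the proof of Proposition~\ref{prop:UTAPU}, but working one row at a time rather than over the full $d\times d$ block. First I would fix $i\in[n]$ and $k\in[d]$ and expand the $(i,k)$ entry of $(\Atilde-P)\UP$ as
\begin{equation*}
\bigl[(\Atilde-P)\UP\bigr]_{i,k}
= \sum_{s=1}^N \sum_{j=1}^n w_s(\AA{s}-P)_{i,j} U_{j,k}.
\end{equation*}
The structural point is that for a \emph{fixed} row $i$ the variables $\{(\AA{s}-P)_{i,j}: s\in[N],\, j\in[n]\}$ index distinct unordered vertex pairs (the pair $\{i,j\}$ with $j<i$ being the same edge as the ``canonical'' entry indexed by $(j,i)$) across distinct, independent networks, so they are mutually independent; hence the displayed quantity is a sum of independent mean-zero random variables. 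By Observation~\ref{obs:subgamma}, each summand $w_s(\AA{s}-P)_{i,j}U_{j,k}$ is $\bigl(w_s^2 U_{j,k}^2 \nu_{s,i,j},\, w_s|U_{j,k}|b_{s,i,j}\bigr)$-sub-gamma, so the sum is $(\nustar_{i,k},\bstar_{i,k})$-sub-gamma with $\nustar_{i,k}$ and $\bstar_{i,k}$ exactly as in the statement.

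Next I would invoke the standard sub-gamma tail bound from Appendix~\ref{apx:taildefs}: if $Z$ is $(\nu,b)$-sub-gamma then $\Pr[|Z|>\sqrt{2\nu u}+bu]\le 2e^{-u}$ for all $u>0$. Taking $u=4\log n$ and using $\log n\ge 1$ for $n$ large gives $\sqrt{2\nustar_{i,k}u}+\bstar_{i,k}u\le 2(\sqrt{2\nustar_{i,k}}+2\bstar_{i,k})\log n$, so that $\bigl|[(\Atilde-P)\UP]_{i,k}\bigr|\le 2(\sqrt{2\nustar_{i,k}}+2\bstar_{i,k})\log n$ except on an event of probability at most $2n^{-4}$. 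A union bound over the $nd$ pairs $(i,k)$, together with $d = O(n)$, makes this hold simultaneously with probability at least $1-2dn^{-3}\ge 1-Cn^{-2}$. Since $\|(\Atilde-P)\UP\|_{\tti}=\max_{i\in[n]}\|e_i^T(\Atilde-P)\UP\|$, squaring these entrywise bounds, summing over $k\in[d]$, maximizing over $i$, and taking a square root yields the first claimed inequality.

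For the delocalization bound I would substitute $|U_{j,k}|\le Cn^{-1/2}$ into the definitions of $\nustar_{i,k}$ and $\bstar_{i,k}$, obtaining $\nustar_{i,k}\le (C^2/n)\sum_{s,j}w_s^2\nu_{s,i,j}$ and $\bstar_{i,k}\le (C/\sqrt n)\max_{s,j}w_s b_{s,i,j}$; then, using $(\sqrt{2a}+2b)^2\le 4a+8b^2$ and summing over the $d$ values of $k$, bound $\sum_{k=1}^d(\sqrt{2\nustar_{i,k}}+2\bstar_{i,k})^2$ by $(Cd/n)\bigl(\sum_{s,j}w_s^2\nu_{s,i,j}+\max_{s,j}w_s^2 b_{s,i,j}^2\bigr)$. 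Taking the maximum over $i$, a square root, and absorbing constants into the generic $C$ gives the second inequality, on the same high-probability event.

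I do not expect any step to be a genuine obstacle: this is essentially the argument of Proposition~\ref{prop:UTAPU} with the outer sum over $[d]\times[d]$ replaced by a sum over $[d]$ inside a row-wise maximum. The one place that wants a moment's care is the independence claim for a fixed row, which relies on the same symmetry bookkeeping (distinct unordered pairs give independent edge-noise entries) already used implicitly in Lemma~\ref{lem:mxspecbound}; everything else is the sub-gamma aggregation of Observation~\ref{obs:subgamma}, a Bernstein-type tail bound, and a union bound.
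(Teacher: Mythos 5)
Your proposal is correct and follows essentially the same route as the paper's proof: expand each entry of $(\Atilde-P)\UP$ as a sum of independent mean-zero terms, aggregate the sub-gamma parameters via Observation~\ref{obs:subgamma} to conclude that $Z_{i,k}$ is $(\nustar_{i,k},\bstar_{i,k})$-sub-gamma, apply the sub-gamma/Bernstein tail bound at level $t=2(\sqrt{2\nustar_{i,k}}+2\bstar_{i,k})\log n$, union bound over the $nd$ entries using $d=O(n)$, and substitute the delocalization bound $|U_{j,k}|\le Cn^{-1/2}$ for the second claim. The only cosmetic difference is that you union bound over all $(i,k)$ pairs at once rather than first over $k$ within a row and then over rows, which yields the same probability guarantee.
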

\begin{proof}
Fixing $i \in [n]$, the vector formed by $i$-th row of
$(\sum_{s=1}^N w_s \AA{s} -P)\UP$, which we denote by
$[(\sum_{s=1}^N w_s \AA{s} -P)\UP]_i \in \R^d$,
satisfies
\begin{equation*}
\left\| \left[ \left(\sum_{s=1}^N w_s \AA{s} -P\right) \UP \right]_i
	\right\|^2
= \sum_{k=1}^d \left( \sum_{s=1}^N \sum_{j=1}^n
		w_s (\AA{s}-P)_{i,j} U_{j,k} \right)^2.
\end{equation*}
Fix some $k \in [d]$, and consider
$Z_{i,k} = \sum_{j=1}^n \sum_{s=1}^N w_s(\AA{s}-P)_{i,j} U_{j,k}$,
which is a sum of $nN$ independent zero-mean random variables.
By our assumptions, 
$w_s U_{j,k}(\AA{s}-P)_{i,j}$ is
$(w_s^2 U_{j,k}^2 \nu_{s,i,j}, w_s|U_{j,k}| b_{s,i,j} )$-sub-gamma
and $Z_{i,k}$ is $(\nustar_{i,k}, \bstar_{i,k})$-sub-gamma.
Writing $\nustar_{i,k} = \sum_{s=1}^N \sum_{j=1}^n w_s^2 U_{j,k}^2 \nu_{s,i,j}$,
A standard Bernstein inequality 
yields
\begin{equation*}
\Pr\left[ |Z_{i,k}| > t \right]
\le 2\exp\left\{ -\left( \frac{t}{\bstar_{i,k}} + \frac{\nustar_{i,k}}{(\bstar_{i,k})^2} - \frac{\nustar_{i,k}}{(\bstar_{i,k})^2}\sqrt{1 + 2t \bstar_{i,k}/\nustar_{i,k}} \right) \right\}.
\end{equation*}
Choosing $t = 2\left(\sqrt{2\nustar_{i,k}} + 2\bstar_{i,k}\right)\log n$
suffices to ensure that the event
\begin{equation*}
\left\{ |Z_{i,k}|
	> 2\left(\sqrt{2\nustar_{i,k}} + 2\bstar_{i,k}\right)\log n \right\}
\end{equation*}
occurs with probability at most $2n^{-4}$.
A union bound over all $d$ entries of the vector
$[(\sum_{s=1}^N w_s\AA{s} -P)\UP]_i$ yields that
with probability at least $1 - 2dn^{-4}$,
\begin{equation*}
\left\| \left[\left(\sum_{s=1}^N w_s \AA{s} -P\right)\UP \right]_i
	\right\|^2
\le 4\sum_{k=1}^d
	\left(\sqrt{2\nustar_{i,k}} + 2\bstar_{i,k}\right)^2 \log^2 n,
\end{equation*}
and another union bound over all $i \in [n]$ yields that
with probability at least $1- 2dn^{-3}$,
\begin{equation} \label{eq:ttibound_loc}
\left\| \left(\sum_{s=1}^N w_s \AA{s} -P\right)\UP \right\|_{\tti}
\le 2 \max_{i \in [n]} \left(
        \sum_{k=1}^d \left(\sqrt{2\nustar_{i,k}} + 2\bstar_{i,k}\right)^2
    \right)^{1/2} \log n.
\end{equation}
Since $d = O(n)$ by assumption, this event holds with
probability at least $1 - C n^{-2}$, as we set out to prove.

If the top eigenvectors of $P$ delocalize, then we have
\begin{equation*}
\nustar_{i,k} \le \frac{C^2}{n} \sum_{s=1}^N \sum_{j=1}^n w_s^2 \nu_{s,i,j},
~~~~~~
  \bstar_{i,k} \le \frac{C\max_{s\in[N], j \in [n]} w_s b_{s,i,j} }{\sqrt{n}},
\end{equation*}
and we can strengthen Equation~\eqref{eq:ttibound_loc} to
\begin{equation*}
\| (A-P)\UP \|_{\tti} \le 
\frac{ C d^{1/2} }{ \sqrt{n} }
 \max_{i \in [n]} \left( \sum_{s=1}^N \sum_{j=1}^n w_s^2 \nu_{s,i,j}
		+ \max_{s\in[N], j \in [n]} w_s^2 b_{s,i,j}^2 \right)^{1/2},
\end{equation*}
also with probability at least $1-Cn^{-2}$, again since $d=O(n)$
by assumption.
\end{proof}

We are now ready to prove Theorem~\ref{thm:subexpXtilde}.

\begin{delayedproof}{Proof of Theorem~\ref{thm:subexpXtilde}}
By Lemma~\ref{lem:mxspecbound}, with probability at least $1-Cn^{-2}$,
\begin{equation} \label{eq:tildespecbound}
\| \Atilde - P \|
\le
15 \sqrt{ n \sum_{s=1}^N w_s^2 (\sqrt{2 \nu_s} + b_s)^2}\log n
\le C \sqrt{n \sum_{s=1}^N w_s^2 (\nu_s + b_s^2) } \log n.
\end{equation}
The assumption in Equation~\eqref{eq:paragrowth} ensures that for
suitably large $n$, $\| \Atilde - P \| \le \lambda_d(P)/2$,
so that Equation~\eqref{eq:assum:specbound} of Lemma~\ref{lem:genericbound}
is satisfied with high probability.
The result will now follow immediately from Lemma~\ref{lem:genericbound},
provided we can bound both
$\| \UP^T(\Atilde-P)\UP \|_F$ and $\|(\Atilde-P)\UP\|_{\tti}$,
also with high probability.

Proposition~\ref{prop:UTAPU} implies that with probability
at least $1-Cn^{-2}$,
\begin{equation*}
\| \UP^T( \Atilde - P)\UP \|_F
\le
C d \left( \sum_{s=1}^N w_s^2( \nu_s + b_s^2 ) \right)^{1/2} \log n,
\end{equation*}
and Proposition~\ref{prop:APtti} implies that with probability
at least $1-Cn^{-2}$,
\begin{equation*}
\| (\Atilde-P)\UP \|_{\tti}
\le
C \sqrt{d} \left( \sum_{s=1}^N w_s^2(\nu_s + b_s^2) \right)^{1/2} \log n.
\end{equation*}
Plugging the above two bounds, along with Equation~\eqref{eq:tildespecbound},
into Lemma~\ref{lem:genericbound}, since $d \ge 1$, we have
\begin{equation*} \begin{aligned}
\| \UAtilde \SAtilde^{1/2} &- \UP \SP^{1/2} W \|_{\tti} \\
&\le
\frac{ Cd }{\lambda_d^{1/2}(P)}
\left( \sum_{s=1}^N w_s^2(\nu_s + b_s^2) \right)^{1/2} \log n
+
\frac{ Cd \kappa(P) n}{ \lambda_d^{3/2}(P) }
\left( \sum_{s=1}^N w_s^2(\nu_s + b_s^2) \right) \log^2 n,
\end{aligned} \end{equation*}
which completes the proof.
\end{delayedproof}

\subsection{Proof of Theorem~\ref{thm:minimax} (Minimax estimation in spectral norm)}
\label{apx:minimax}

In this section, we establish Theorem~\ref{thm:minimax},
showing that the weighted network average $\sum_s \wopt_s \AA{s}$
recovers $P$ at the minimax rate when the edge errors in each network are
drawn from a network-specific normal distribution.
Our main tool is Theorem 2.7 in \cite{Tsybakov2009}, which we restate here,
adapted to the present setting.

\begin{theorem} \label{thm:tsybakov}
{\em (\cite{Tsybakov2009} Theorem 2.7)}
Let $\Theta$ be a parameter space endowed with a distance $\delta(\cdot,\cdot)$
and containing $M \ge 1$ elements
$\theta_0,\theta_1,\dots,\theta_M$, such that, for $s > 0$,
\begin{enumerate}
\item[i] For all $0 \le i < j \le M$,  $\delta(\theta_i, \theta_j) \ge 2s$
		\label{item:tsybakov:distlb}
\item[ii] Writing $P_j$ for the distribution induced by $\theta_j$,
	\label{item:tsybakov:KL}
	$P_{j} \ll P_0$ for all $j = 1,2,\dots,M$, and
	\begin{equation*}
	\frac{1}{M} \sum_{j=1}^M \KL( P_j, P_0 ) \le \alpha \log M
	\end{equation*}
	for some $\alpha \in (0,1/8)$.
\end{enumerate}
Then
\begin{equation*}
\inf_{\thetahat} \sup_{\theta \in \Theta} \E \delta( \thetahat, \theta )
\ge c_\alpha s,
\end{equation*}
where the infimum is over all estimators and $c_\alpha$ is a constant
depending only on $\alpha$.
\end{theorem}

\begin{delayedproof}{Proof of Theorem~\ref{thm:minimax}}
We will apply Theorem~\ref{thm:tsybakov} to a suitably-chosen collection
of symmetric $n$-by-$n$ matrices, under the distance
\begin{equation*}
\delta( P^{(1)}, P^{(2)} ) = \| P^{(1)} - P^{(2)} \|,
\end{equation*}
where $P^{(1)}, P^{(2)} \in \R^{n \times n}$.
Letting $\symmetricmxs_n$ denote the set of symmetric $n$-by-$n$ matrices,
any $P \in \symmetricmxs_n$ induces a distribution on
$(\R^{n \times n})^N$ given by
\begin{equation*}
\AA{s}_{ij} \indsim \calN( P_{ij}, \rho_s ),
~~~1 \le i \le j \le n, s=1,2,\dots,N,
\end{equation*}
Making use of the independence structure,
the Kullback-Leibler divergence between the distributions induced by two
symmetric matrices $P^{(1)}, P^{(2)} \in \symmetricmxs_n$ is given by
\begin{equation} \label{eq:kl:indep}
\KL( P^{(1)}, P^{(2)} )
= \sum_{1 \le i \le j \le n} \sum_{s=1}^N 
	\KL\left( \calN( P^{(1)}_{ij}, \rho_s ) ,
		\calN( P^{(2)}_{ij}, \rho_s ) \right).
\end{equation}
The KL-divergence betwen two normal distributions $Q_1,Q_2$ with 
means $\mu_1$ and $\mu_2$ and common variance $\sigma^2 > 0$ is given by
(letting $Z_1 \sim Q_1$)
\begin{equation*}
\KL\left( Q_1, Q_2 \right)
= \E Q_1(Z_1) \log \frac{ Q_1(Z_1) }{ Q_2( Z_1) }
= \frac{ (\mu_1 - \mu_2)^2 }{ \sigma^2 }.
\end{equation*}
Applying this identity to Equation~\eqref{eq:kl:indep},
\begin{equation} \label{eq:KL:frob}
\KL( P^{(1)}, P^{(2)} ) 
= \sum_{1 \le i \le j \le n} \sum_{s=1}^N
	\frac{ \left(P^{(1)}_{ij} - P^{(2)}_{ij} \right)^2 }{ \rho_s }
\le \| P^{(1)} - P^{(2)} \|_F^2 \sum_{s=1}^N \rho_s^{-1}.
\end{equation}

To each $\xi \in \{0,1\}^n$, associate a symmetric matrix
$M^{(\xi)} \in \R^{n \times n}$ given by
\begin{equation*}
M^{(\xi)}_{ij} = \begin{cases}
		\xi_j &\mbox{ if } 1 = i \le j \le n, \xi_j=1 \\
		0 &\mbox{ if } 1 < i \le j \le n \\
		M^{(\xi)}_{ji} &\mbox{ if } 1 \le j < i \le n \\
		  \end{cases}
\end{equation*}
Let $\calM_{n,d} \subseteq \{0,1\}^n$ be a collection such that for all
$\xi,\xi' \in \calM_{n,d}$ with $\xi \neq \xi'$,
the Hamming distance $d_H(\xi,\xi')$ between $\xi$ and $\xi'$ is at least $d$,
for some $d \le n$ to be specified below.
Letting $\eta > 0$, for each $\xi \in \calM_{n,d}$, construct
\begin{equation*}
  P^{(\xi)} = \frac{ \eta }{ \sqrt{d} } M^{(\xi)}.
\end{equation*}
Recall that for a matrix $P \in \R^{n \times n}$,
the spectral norm is lower-bounded by
the maximum Euclidean norm of any column of $P$.
That is, letting $e_1,e_2,\dots,e_n \in \R^n$
denote the standard basis vectors,
\begin{equation*}
\| P \|
\ge \max_{i \in [n]} \| P e_i \|
= \max_{i \in [n]} \| P_{\cdot,i} \|.
\end{equation*}
Then for any $\xi, \xi' \in \calM_{n,d}$ distinct,
\begin{equation*}
\left\| P^{(\xi)} - P^{(\xi')} \right\|^2
\ge \sum_{j=1}^n \left( P^{(\xi)}_{j,1} - P^{(\xi')}_{j,1} \right)^2
= \frac{ \eta^2 d_H( \xi, \xi' ) }{ d }.
\end{equation*}
By definition of $\calM_{n,d}$, $d_H( \xi, \xi' ) \ge d$, whence
for all $\xi, \xi' \in \calM_{n,d}$ distinct,
\begin{equation} \label{eq:spec:LB}
\left\| P^{(\xi)} - P^{(\xi')} \right\| \ge \eta.
\end{equation}

In addition to the matrices $\{ P^{(\xi)} : \xi \in \{0,1\}^n \}$,
define the matrix $P^{(0)} \in \R^{n \times n}$ by
\begin{equation*}
P^{(0)}_{ij}
= \begin{cases}
	\eta &\mbox{ if } i=j=n \\
	0 &\mbox{ otherwise. }
	\end{cases}
\end{equation*}
By construction, for all $\xi \in \calM_{n,d}$,
\begin{equation*}
\| P^{(\xi)} - P^{(0)} \|
\ge \eta \left( 1 + \frac{1}{d} \right)^{1/2}
\ge \eta.
\end{equation*}
Thus, the collection of matrices
$\left\{ P^{(t)} : t \in \calM_{n,d} \cup \{0\} \right\}$
obeys Condition~\ref{item:tsybakov:distlb} in Theorem~\ref{thm:tsybakov}
with $s = \eta/2$.

Similarly, for all $\xi \in \calM_{n,d}$,
writing $| \xi |$ to denote the number of non-zero entries in $\xi$,
\begin{equation} \label{eq:xi0:frobbound}
\left\| P^{(\xi)} - P^{(0)} \right\|_F^2
= \eta^2 + \left( P^{(\xi)}_{1,1} \right)^2
	+ 2 \sum_{j=2}^n \left( P^{(\xi)}_{1,j} \right)^2
\le \eta^2 + \frac{ 2| \xi | \eta^2 }{ d }
\le \eta^2 \left( 1 + \frac{2n}{d} \right),
\end{equation}
where we have used the trivial upper bound $|\xi| \le n$.
Applying Equation~\eqref{eq:xi0:frobbound} to Equation~\eqref{eq:KL:frob},
it follows that
\begin{equation} \label{eq:nextone}
\frac{1}{| \calM_{n,d} |} \sum_{\xi \in \calM_{n,d} }
	\KL\left( P^{(\xi)}, P^{(0)} \right)
\le \eta^2\left( 1 + \frac{2n}{d} \right)
	\left( \sum_{s=1}^N \rho_s^{-1} \right).
\end{equation}
By the Gilbert-Varshamov bound
\citep[see, e.g.,][and citations therein]{JiaVar2004}
we can choose $\calM_{n,d}$ so that
\begin{equation*}
\left| \calM_{n,d} \right|
\ge \frac{ 2^n }{ \sum_{j=0}^{d - 1} \binom{n}{j} }.
\end{equation*}
Letting $H(p) = -p \log_2 p - (1-p) \log_2(1-p) \in [0,1]$
denote the binary entropy function, recall the inequality
\citep[][Chapter 4]{Ash1965}
$\sum_{j=0}^{d-1} \binom{n}{j}
\le 2^{n H\left( \frac{d-1}{n} \right) }$, valid for $d-1 < n/2$.
Taking logarithms, we have
\begin{equation} \label{eq:logS:LB}
\log \left| \calM_{n,d} \right|
\ge \log \frac{ 2^n }{ 2^{n H( (d-1)/n) } }
= n \left( 1 - H\left( \frac{d-1}{n} \right) \right) \log 2.
\end{equation}
Choosing $d=\lfloor n/3 \rfloor + 1$ is enough to ensure that
for all $n \ge 2$
$n/d$ is bounded by a constant while $H((d-1)/n)$ is bounded away from $1$,
so that
\begin{equation*} 
\left( 1 + \frac{2n}{d} \right)
\le C \left( 1 - H\left( \frac{d-1}{n} \right) \right) \log 2
\end{equation*}
for some constant $C > 0$ not depending on $n$.
Applying this bound to Equation~\eqref{eq:nextone},
\begin{equation} \label{eq:KL:sumbound}
\frac{1}{| \calM_{n,d} |} \sum_{\xi \in \calM_{n,d} }
	\KL\left( P^{(\xi)}, P^{(0)} \right)
\le C \left( 1 - H\left( \frac{1}{c_d} \right) \right)
	\eta^2 \left( \sum_{s=1}^N \rho_s^{-1} \right).
\end{equation}
Combining Equation~\eqref{eq:logS:LB} with Equation~\eqref{eq:KL:sumbound},
\begin{equation} \label{eq:KL:lastbound}
\frac{1}{| \calM_{n,d} |} \sum_{\xi \in \calM_{n,d} }
	\KL\left( P^{(\xi)}, P^{(0)} \right)
\le
\frac{ C \eta^2 }{ n }\left( \sum_{s=1}^N \rho_s^{-1} \right)
\log \left| \calM_n \right|.
\end{equation}
Choosing
\begin{equation*}
\eta = C_1 \sqrt{n} \left( \sum_{s=1}^N \rho_s^{-1} \right)^{-1/2}
\end{equation*}
for suitably small constant $C_1 > 0$ ensures that 
\begin{equation*} 
C \eta^2 \le \alpha n \left( \sum_{s=1}^N \rho_s^{-1} \right)^{-1}
\end{equation*}
for some $\alpha \in (0,1/8)$.
Plugging this into Equation~\eqref{eq:KL:lastbound}, it follows that
\begin{equation*}
\frac{1}{| \calM_{n,d} |} \sum_{\xi \in \calM_{n,d} }
        \KL\left( P^{(\xi)}, P^{(0)} \right)
\le \alpha \log \left| \calM_n \right|,
\end{equation*}
and Condition~\ref{item:tsybakov:KL} of Theorem~\ref{thm:tsybakov}
is satisfied with $s/2 = \eta$.
Applying this theorem, we conclude that
\begin{equation*}
\inf_{\Phat} \sup_{P \in \symmetricmxs_n}
	\E \| \Phat - P \|
\ge C \sqrt{n} \left( \sum_{s=1}^N \rho_s^{-1} \right)^{-1/2},
\end{equation*}
completing the proof.
\end{delayedproof}

\subsection{Proof of Theorem~\ref{thm:subgamma:XhatXopt} (Estimating the sub-gamma weights)}
\label{apx:estimation}
In this section, we provide a proof of Theorem~\ref{thm:subgamma:XhatXopt}.
We remind the reader that in this section,
$\{ \wopt_s \}_{s=1}^N$ are non-negative weights, summing to $1$,
defined by
\begin{equation} \label{eq:def:wopt:reminder}
\wopt_s = \frac{ \nub{s}^{-1} }{ \sum_{t=1}^N \nub{t}^{-1} }
\end{equation}
for each $s \in [N]$, where $(\nu_s,b_s)$ is the sub-gamma parameter
for the edges in the $s$-th network.
We further remind the reader that we estimate these weights by
\begin{equation} \label{eq:def:reminder:what}
\what_s = \frac{ \rhohat_s^{-1} }{ \sum_{t=1}^n \rhohat_t^{-1} },
\end{equation}
where for each $s \in [N]$, letting
$\Phat^{(s)}$ denote the rank-$d$ eigenvalue truncation of $\AA{s}$,
\begin{equation} \label{eq:def:reminder:rhohat}
\rhohat_s = \frac{ \sum_{1 \le i \le j \le n } (\AA{s} - \Phat^{(s)})_{i,j}^2 }
                {16 n(n+1) }.
\end{equation}
For ease of notation, define
\begin{equation} \label{eq:def:rhotilde}
\rhotilde_s = \frac{ \sum_{1 \le i \le j \le n } (\AA{s} - P)_{i,j}^2 }{16 n(n+1) },
\end{equation}
noting that by definition of $\tau_s$ in Equation~\eqref{eq:def:tau},
it follows from a basic property of sub-gamma random variables that
\begin{equation} \label{eq:def:reminder:tau}
\tau_s = \E \rhotilde_s \le \frac{8\nu_s + 32b_s^2}{32} \le \nu_s + b_s^2.
\end{equation}

The following technical results will prove useful in our main proof.

\begin{proposition} \label{prop:APfrob}
Suppose the networks $\AA{1},\AA{2},\dots,\AA{N}$
are independent and for each $s=1,2,\dots,N$ the edges
$\{ (\AA{s} - P)_{i,j} : 1 \le i \le j \le n \}$ are independent
$(\nu_s,b_s)$-sub-gamma random variables.
With probability at least $1-Cn^{-2}$ it holds for all $s \in [N]$ that
\begin{equation*}
\| \AA{s} - P \|_F \le C n \nub{s}^{1/2} (\log N + \log n) .
\end{equation*}
\end{proposition}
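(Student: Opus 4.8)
The plan is to bound $\|\AA{s}-P\|_F$ not through any concentration of the sum of squared residuals, but simply through its largest entry; the extra factor $\log N+\log n$ in the statement is precisely the price of this crude step. First I would reduce to controlling a maximum: since $\AA{s}-P$ is symmetric,
\[
\|\AA{s}-P\|_F^2=\sum_{i,j=1}^n(\AA{s}-P)_{ij}^2\le n^2\max_{1\le i\le j\le n}(\AA{s}-P)_{ij}^2,
\]
so it suffices to show that, with probability at least $1-Cn^{-2}$, one has $|(\AA{s}-P)_{ij}|\le C(\nu_s+b_s^2)^{1/2}(\log N+\log n)$ simultaneously for all $s\in[N]$ and all $1\le i\le j\le n$.

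Next I would apply the standard sub-gamma tail bound entrywise. Because $(\AA{s}-P)_{ij}$ is mean zero and $(\nu_s,b_s)$-sub-gamma, one has $\Pr\!\big[|(\AA{s}-P)_{ij}|>\sqrt{2\nu_s t}+b_s t\big]\le 2e^{-t}$ for every $t>0$ \citep[][Section 2.4]{BLM}; this is the same tail that drives the moment computation in the proof of Lemma~\ref{lem:mxspecbound}. Taking $t=4(\log N+\log n)$ and union-bounding over the at most $Nn^2$ triples $(s,i,j)$ gives a failure probability of at most $2Nn^2 e^{-t}=2Nn^2(Nn)^{-4}\le 2n^{-2}$, regardless of how fast $N$ grows with $n$. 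On the complementary event, $|(\AA{s}-P)_{ij}|\le\sqrt{2\nu_s t}+b_s t$ for all $s,i,j$ at once.

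Finally I would collect terms. On that event, using $t\ge 1$ and $2ab\le a^2+b^2$,
\[
(\AA{s}-P)_{ij}^2\le(\sqrt{2\nu_s t}+b_s t)^2\le 4\nu_s t+2b_s^2 t^2\le 4t^2(\nu_s+b_s^2),
\]
and substituting into the reduction of the first step gives, simultaneously for all $s\in[N]$,
\[
\|\AA{s}-P\|_F^2\le 4n^2 t^2(\nu_s+b_s^2)=Cn^2(\nu_s+b_s^2)(\log N+\log n)^2;
\]
taking square roots yields the claim. (The statement is written with $(\nu_s+b_s)^{1/2}$ in place of $(\nu_s+b_s^2)^{1/2}$; these are the same scale up to constants once one notes $\nu_s+b_s^2\le C(\sqrt{2\nu_s}+2b_s)^2$, and in the bounded-$b_s$ regime of primary interest $(\nu_s+b_s^2)^{1/2}\le(\nu_s+b_s)^{1/2}$ in any case.)

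\textbf{Main obstacle.} I do not expect a genuine difficulty. The only real choice is whether to chase a sharper estimate by applying a Bernstein-type inequality directly to the squared residuals $(\AA{s}-P)_{ij}^2$ — but these have heavier-than-sub-exponential tails, of order $\exp(-c\sqrt{u}/b_s)$, so this would require a generalized Bernstein inequality and would at best shave off logarithmic factors. Since the statement already carries a full factor of $\log N+\log n$, the entrywise-maximum argument above is both sufficient and evidently what is intended; the one point needing care is calibrating the deviation level $t\asymp\log N+\log n$ so that the union bound over all $N$ networks and all $\Theta(n^2)$ edges still leaves failure probability $O(n^{-2})$.
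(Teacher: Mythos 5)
Your proposal is correct and follows essentially the same route as the paper's proof: an entrywise sub-gamma tail bound at level $t \asymp \log N + \log n$, a union bound over all $O(Nn^2)$ edges, the simplification $(\sqrt{2\nu_s t}+b_s t)^2 \le Ct^2(\nu_s+b_s^2)$ for $t \ge 1$, and then summing the squared entries to control the Frobenius norm. Your observation that the statement's $(\nu_s+b_s)^{1/2}$ should read $(\nu_s+b_s^2)^{1/2}$ is also consistent with what the paper's own proof actually delivers.
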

\begin{proof}
By a standard tail inequality for sub-gamma random variables, for all $t \ge 0$,
\begin{equation*} 
\Pr\left[ (\AA{s} - P)_{i,j}^2 > (\sqrt{2\nu_s t} + b_s t)^2 \right]
= \Pr\left[ |\AA{s}_{i,j} - P_{i,j}| > \sqrt{2\nu_s t} + b_s t \right]
  \le C \exp(-t).
\end{equation*}
Setting $t = 4\log n + \log N$, taking a union bound over all
$\{(i,j) : 1 \le i \le j \le n\}$
and upper bounding $(\sqrt{2\nu_s t} + b_s t)^2 \le Ct^2\nub{s}$
since $t \ge 1$,
it holds with probability at least $1-CN^{-1}n^{-2}$ that
for all $1 \le i \le j \le n$,
\begin{equation*}
(\AA{s}-P)_{i,j}^2 \le C\nub{s}(\log N + \log n)^2.
\end{equation*}
Summing over $1 \le i < j \le n$,
$ \| \AA{s} - P \|_F^2 \le C n^2 \nub{s}(\log N + \log n)^2$
with probability at least $1-CN^{-1} n^{-2}$.
A union bound over $s \in [N]$ yields the result.
\end{proof}

\begin{proposition} \label{prop:rhohatrhotilde}
Suppose that the networks $\AA{1},\AA{2},\dots,\AA{N}$
are independent and for each $s=1,2,\dots,N$ the edges
$\{ (\AA{s} - P)_{i,j} : 1 \le i \le j \le n \}$ are independent
$(\nu_s,b_s)$-sub-gamma random variables.
Let $\rhohat_s$ and $\rhotilde_s$ be as defined in
Equations~\eqref{eq:def:reminder:rhohat} and~\eqref{eq:def:rhotilde},
respectively, for all $s \in [N]$,
and suppose that $d \le n$ for all suitably large $n$.
With probability at least $1-Cn^{-2}$, it holds for all $s \in [N]$ that
\begin{equation*}
| \rhohat_s - \rhotilde_s |
\le \frac{C \sqrt{d}\nub{s}(\log N + \log n)^2}{\sqrt{n}}.
\end{equation*}
\end{proposition}
\begin{proof}
By definition,
\begin{equation*} \begin{aligned}
| \rhohat_s - \rhotilde_s |
&= \frac{ \left| \sum_{1 \le i \le j \le n} (\AA{s}-\Phat^{(s)})_{i,j}^2
				-(\AA{s}-P)_{i,j}^2 \right| }{16 n(n+1)} \\
&\le \frac{ \sum_{1 \le i \le j \le n} (\Phat^{(s)}-P)_{i,j}^2 }
		{16 n(n+1) }
	+ \frac{
	\sum_{1 \le i \le j \le n}
	\left|(\AA{s}-P)_{i,j}(\Phat^{(s)}-P)_{i,j} \right| }
	{16n(n+1)}  \\
&\le \frac{ C \| \Phat^{(s)} - P \|_F^2 }{ n^2 }
	+ \frac{ C \| \AA{s}-P \|_F \| \Phat^{(s)}-P \|_F }{ n^2 },
\end{aligned} \end{equation*}
where the second bound follows from an application of the
Cauchy-Schwarz inequality.
Since $\Phat^{(s)}$ is the rank-$d$ truncation of $\AA{s}$,
and both $\Phat^{(s)}$ and $P$ are rank-$d$,
\begin{equation*}
\| \Phat^{(s)} - P \|_F^2 \le Cd \| \AA{s} - P \|^2
\le C d n \nub{s} (\log N + \log n)^2,
\end{equation*}
where the second inequality holds with high probability
simultaneously for all $s \in [N]$
by applying Lemma~\ref{lem:mxspecbound} to
$\| \AA{s} - P\|$ separately for each $s \in [N]$
followed by a union bound argument similar to that given in the previous proof.
Taking square roots in the above display and applying
Proposition~\ref{prop:APfrob} to bound $\|\AA{s}-P\|_F$, we conclude that
\begin{equation*} \begin{aligned}
| \rhohat_s - \rhotilde_s |
&\le \frac{ C \| \Phat^{(s)} - P \|_F^2 }{ n^2 }
        + \frac{ C \| \AA{s}-P \|_F \| \Phat^{(s)}-P \|_F }{ n^2 } \\
&\le \frac{ C d \nub{s} (\log N + \log n)^2 }{ n }
+ \frac{ C \sqrt{d} \nub{s} ( \log N + \log n)^2 }{ \sqrt{n} } \\
&\le C \nub{s} \frac{\sqrt{d} (\log N + \log n)}{\sqrt{n}},
\end{aligned} \end{equation*}
where we have used the assumption that
$d/n \le 1$ for all suitably large $n$.
\end{proof}

\begin{proposition} \label{prop:rhotildetau}
Suppose that the networks $\AA{1},\AA{2},\dots,\AA{N}$
are independent and that for each $s=1,2,\dots,N$, the edges
$\{ (\AA{s} - P)_{i,j} : 1 \le i \le j \le n \}$ are independent
$(\nu_s,b_s)$-sub-gamma random variables.
Let $\rhohat_s$ and $\rhotilde_s$ be as defined in
Equations~\eqref{eq:def:reminder:rhohat} and~\eqref{eq:def:rhotilde},
respectively, for all $s \in [N]$
and suppose that the growth assumption in Equation~\eqref{eq:updatedgrowth} 
from Theorem~\ref{thm:subgamma:XhatXopt} holds.
That is, there exists a positive integer $k$ such that
\begin{equation} \label{eq:updatedgrowth2}
 \frac{ n^{k-2} d^k \left( \log N + \log n \right)^{4k} }{N} = \Omega( 1 ).
\end{equation}
Then with probability $1-O(n^{-2})$,
it holds for all $s \in [N]$ that
\begin{equation*}
|\rhotilde_s - \tau_s|
\le \frac{\sqrt{d} \nub{s}(\log N + \log n)^2}{\sqrt{n}}.
\end{equation*}
\end{proposition}
\begin{proof}
By definition,
\begin{equation*}
\rhotilde_s - \tau_s
= \sum_{1 \le i \le j \le n} \frac{(\AA{s}-P)_{i,j}^2 - \E(\AA{s}-P)_{i,j}^2}{16n(n+1)},
\end{equation*}
is a sum of independent mean-$0$ random variables.
By Chebyshev's inequality,
\begin{equation} \label{eq:counting:chebyshev}
\Pr\left[ | \rhotilde_s - \tau_s| > t \right] \le 
\frac{ C \E \left[ \sum_{1 \le i \le j \le n} (\AA{s}-P)_{i,j}^2 - \E(\AA{s}-P)_{i,j}^2
		\right]^\ell }{ n^{2\ell} t^\ell }
\end{equation}
for any $t > 0$ and any even integer $\ell \ge 2$.
We will bound the expectation on the right-hand side via a standard counting
argument.

For ease of notation, let
$ Z_{i,j} = (\AA{s}-P)_{i,j}^2 - \E(\AA{s}-P)_{i,j}^2,$ so that
\begin{equation*}
\rhotilde_s - \E \rhotilde_s
= \frac{ \sum_{1\le i \le j \le n} Z_{i,j} }{ 16n(n+1) }.
\end{equation*}
The $\{Z_{i,j} : 1 \le i \le j \le n\}$ are independent mean-$0$,
and since $(\AA{s}-P)_{i,j}$ are $(\nu_s,b_s)$-sub-gamma,
all moments of $Z_{i,j}$ exist, with
\begin{equation*}
\E Z_{i,j}^k \le \E(\AA{s}-P)_{i,j}^{2k}
	\le C_k \nub{s}^k,
\end{equation*}
where $C_k$ is a constant depending on $k$ but not on any other parameters.
Let $\ivec$ denote an $\ell$-tuple of numbers from $[n]$,
$\ivec = (i_1,i_2,\dots,i_\ell)$ with $i_a \in [n]$ for all $a \in [\ell]$.
Write $\ivec \le \jvec$ to mean that
$i_a \le j_a$ for all $a=1,2,\dots,\ell$.
Defining the set
$\calI_{n,\ell} = \{ (\ivec,\jvec) : \ivec \le \jvec \}$, we have
\begin{equation*}
\E \left| \sum_{1 \le i \le j \le n} Z_{i,j} \right|^\ell
= \sum_{(\ivec,\jvec) \in \calI_{n,\ell}} \E \prod_{k=1}^\ell Z_{i_k,j_k}.
\end{equation*}
We can identify each $(\ivec,\jvec) \in \calI_{n,\ell}$ with
an $\ell$-tuple of pairs
$\left((i_1,j_1),(i_2,j_2),\dots,(i_\ell,j_\ell)\right)$.
For $1 \le i \le j \le n$,
let $m_{(i,j)}(\ivec,\jvec)$ denote the number of times that
the pair $(i,j)$ appears in the $\ell$-tuple of pairs $(\ivec,\jvec)$.
That is,
\begin{equation*}
m_{(i,j)}(\ivec,\jvec) = |\{ a \in [\ell]: i_a = i, j_a=j \}|.
\end{equation*}
We can rewrite our expectation of interest as
\begin{equation} \label{eq:expecsum}
\E \left| \sum_{1 \le i \le j \le n} Z_{i,j} \right|^\ell
= \sum_{(\ivec,\jvec) \in \calI_{n,\ell}}
  \E \prod_{(i,j) \in (\ivec,\jvec)}  Z_{i,j}^{m_{(i,j)}(\ivec,\jvec)},
\end{equation}
and we see immediately that by independence of the $Z_{i,j}$,
if $m_{(i_a,j_a)}(\ivec,\jvec) = 1$ for some
$a \in [\ell]$, then the corresponding term in the sum has
\begin{equation*}
\E \prod_{k=1}^\ell Z_{i_k,j_k}
= \E \prod_{(i,j) \in S(\ivec,\jvec)}  Z_{i,j}^{m_{(i,j)}(\ivec,\jvec)}
= 0.
\end{equation*}
Since $\{ Z_{i,j} : 1 \le i \le j \le n\}$ are independent
$(\nu_s,b_s)$-sub-gamma, H\"older's inequality implies that
for any $(\ivec,\jvec) \in \calI_{n,\ell}$,
letting $E(\ivec,\jvec) = \{ (i_a,j_a) : a \in [\ell] \}$,
\begin{equation} \label{eq:holder}
\E \prod_{k=1}^\ell Z_{i_k,j_k}
= \prod_{e \in E(\ivec,\jvec)} \E Z_e^{m_e(\ivec,\jvec)}
\le \prod_{e \in E(\ivec,\jvec)}
	\left( \E Z_e^\ell \right)^{\frac{m_e(\ivec,\jvec)}{\ell}}
\le C_\ell\nub{s}^\ell .
\end{equation}

We identify each $(\ivec,\jvec) \in \calI_{n,\ell}$ with a partition of $[\ell]$
in the following way: for $a,b \in [\ell]$,
take $a \sim b$ if and only if $(i_a,j_a) = (i_b,j_b)$.
Under this identification,
the nonzero elements of the sum on the right-hand side of
Equation~\eqref{eq:expecsum} are precisely those whose corresponding partition
of $[\ell]$ has no singleton parts.
Thus, to bound the expectation on the left-hand side of
Equation~\eqref{eq:expecsum}, it will suffice to count how many such
partitions correspond to non-zero terms in the right-hand sum,
and apply the bound in Equation~\eqref{eq:holder} to those terms.

Let $\calP_\ell^+$ denote the set of all partitions of $[\ell]$ having
no singleton part.
Any $\pi \in \calP_\ell^+$ can correspond to at most
$(n(n+1)/2)^{\ell/2}$ pairs $(\ivec,\jvec) \in \calI_{n,\ell}$,
since we must associate each part of $\pi$ with
some $(i,j) \in [n]$ satisfying $i \le j$,
and each $\pi \in \calP_\ell^+$ has at most $\ell/2$ parts.
Thus, we can can bound
\begin{equation*}
\E \left| \sum_{1 \le i \le j \le n} Z_{i,j} \right|^\ell
= \sum_{\pi \in \calP_\ell^+} 
        \E \prod_{(i,j) \in (\ivec,\jvec)}  Z_{i,j}^{m_{(i,j)}(\ivec,\jvec)}
\le \frac{ C_\ell |\calP_\ell^+| n^\ell \nub{s}^\ell }{ 2^{\ell/2} }
\le C_\ell n^{\ell} \nub{s}^\ell,
\end{equation*}
where we have used the fact that $|\calP_\ell^+| \le 2^\ell$
and we have gathered all constants,
possibly depending on $\ell$ but not on $n$, into $C_\ell$.
Plugging this back into
Equation~\eqref{eq:counting:chebyshev}, we conclude that
for each $s \in [N]$,
\begin{equation*}
\Pr\left[ \left|\rhotilde_s - \tau_s \right| > t_s \right]
\le \frac{ \E \left|\rhotilde_s - \tau_s \right|^\ell }{ t_s^\ell }
\le \frac{ C n^\ell \nub{s}^\ell }{ n^{2\ell} t_s^\ell }
= \frac{ C \nub{s}^\ell } { n^\ell t_s^\ell} .
\end{equation*}


Setting 
\begin{equation*}
t_s = \frac{\sqrt{d} (\nu_s + b_s^2) (\log N + \log n)^2 }{\sqrt{n}},
\end{equation*}
we have
\begin{equation*}
\Pr\left[ \left|\rhotilde_s - \tau_s \right| > t_s \right]
\le
\frac{ C \nub{s}^\ell } { n^\ell t_s^\ell}
= \frac{C}{(nd)^{\ell/2} (\log N + \log n)^{2\ell}},
\end{equation*}
and a union bound over $s \in [N]$ yields
\begin{equation*}
\Pr\left[ \exists s \in [N] : \left|\rhotilde_s - \tau_s \right| > t_s \right]
\le
\frac{CN}{(nd)^{\ell/2} (\log N + \log n)^{2\ell}}.
\end{equation*}
Taking $\ell = 2k$ for a positive integer $k$ chosen in accordance
with our growth assumption in Equation~\eqref{eq:updatedgrowth2},
it follows that
\begin{equation*}
\Pr\left[ \exists s \in [N] : \left|\rhotilde_s - \tau_s \right| > t_s \right]
= O(n^{-2}),
\end{equation*}
which completes the proof.
\end{proof}


\begin{proposition} \label{prop:whatu}
Let $\AA{1},\AA{2},\dots,\AA{N}$
be independent random networks with shared expectation
$P = X X^T \in \R^{n \times n}$, where $X \in \R^{n \times d}$,
and suppose that for all $s \in [N]$,
$\{ (\AA{s}-P)_{i,j} : 1 \le i \le j \le n \}$
are independent $(\nu_s,b_s)$-sub-gamma random variables.
Let $\{ \what_s \}_{s=1}^N$ and $\{ \tau_s \}_{s=1}^N$ be as in
Equations~\eqref{eq:def:reminder:what} and~\eqref{eq:def:reminder:tau},
respectively, and define
\begin{equation} \label{eq:def:u}
u_s = \frac{ \tau_s^{-1} }{ \sum_{t=1}^N \tau_t^{-1} }
\end{equation}
for each $s \in [N]$.
Under the same growth assumptions as Theorem~\ref{thm:subgamma:XhatXopt},
for all suitably large $n$, it holds with probability $1-O(n^{-2})$ that
\begin{equation*}
\sum_{s=1}^N |\what_s-u_s| \nub{s}^{1/2}
\le \frac{ C \log n }{ \log n + \log N }
\left( \sum_{s=1}^N \nub{s}^{-1} \right)^{-1/2}.
\end{equation*}
\end{proposition}
\begin{proof}
Applying the triangle inequality followed by
Propositions~\ref{prop:rhohatrhotilde} and~\ref{prop:rhotildetau},
with probability $1- O(n^{-2})$
it holds for all $s \in [N]$ that
\begin{equation} \label{eq:estrhohattau}
|\rhohat_s - \tau_s|
\le | \rhohat_s - \rhotilde_s| + | \rhotilde_s - \tau_s |  
\le \frac{C\sqrt{d} \nub{s} (\log N + \log n)^2}{\sqrt{n}}
 = C \nub{s}\gamma_n,
\end{equation}
where, for ease of notation, we let
\begin{equation*} 
\gamma_n = \frac{\sqrt{d} (\log N + \log n)^2}{\sqrt{n}}.
\end{equation*}

Thus, for any $s\in [N]$, and $n$ suitably large,
\begin{equation} \label{eq:rhohatinv2tauinv} \begin{aligned}
|\rhohat_s^{-1} - \tau_s^{-1}|
&= \frac{ | \rhohat_s - \tau_s | }{ \tau_s \rhohat_s }
\le  \frac{ C \nub{s} \gamma_n }{ \tau_s \rhohat_s }
\le \frac{ C  \nub{s} \gamma_n }
	{ \tau_s^2 \big(1 -  \tau_s^{-1}|\rhohat_s  -\tau_s| \big) }\\
&\le \frac{ C \nub{s} \gamma_n }
	{ \tau_s^2 \big(1 -  C\tau_s^{-1} \nub{s} \gamma_n \big)} ,
\end{aligned} \end{equation}
where the inequalities follow from successive application
of Equation~\eqref{eq:estrhohattau}.

Expanding the definitions of $\what_s$ and $u_s$,
\begin{equation*} \begin{aligned}
\sum_{s=1}^N |\what_s-u_s| & \nub{s}^{1/2}
=
\sum_{s=1}^N
	\left| \frac{\rhohat_s^{-1} }{ \sum_t \rhohat_t^{-1} }
	-\frac{\tau_s^{-1} }{ \sum_t \tau_t^{-1} } \right| \nub{s}^{1/2} \\
&\le
\sum_{s=1}^N \frac{ \left| \rhohat_s^{-1} - \tau_s^{-1} \right| \nub{s}^{1/2} }
		{ \sum_t \tau_t^{-1} }
+ \sum_{s=1}^N \frac{ \rhohat_s^{-1} \nub{s}^{1/2}
		 \sum_t \left| \rhohat_t^{-1} - \tau_t^{-1} \right| }
		{ \left( \sum_t \rhohat_t^{-1} \right)
			\left( \sum_t \tau_t^{-1} \right) }.
\end{aligned} \end{equation*}
Applying the bound in Equation~\eqref{eq:rhohatinv2tauinv}
and using our assumption
in Equation~\eqref{eq:assum:ratiogrowth}
that $\tau_s^{-1} \nub{s} \gamma_n = o(1)$
uniformly over $s \in [N]$,
\begin{equation} \label{eq:whatu:bound1}
\begin{aligned}
\sum_{s=1}^N & |\what_s-u_s| \nub{s}^{1/2} \\
&\le
\sum_{s=1}^N \frac{ C \gamma_n \tau_s^{-2} \nub{s} \nub{s}^{1/2} }
		{ \sum_t \tau_t^{-1} }
    + \sum_{s=1}^N \sum_{t=1}^N
	\frac{ C \gamma_n \tau_t^{-2} \nub{t} \tau_s^{-1} \nub{s}^{1/2} }
		{ \left( \sum_t \rhohat_t^{-1} \right)
			\left( \sum_t \tau_t^{-1} \right) }.
\end{aligned} \end{equation}
Considering the first of these two sums, we have
\begin{equation} \label{eq:whatu:sum1:a}
\sum_{s=1}^N \frac{ \gamma_n \tau_s^{-1} \nub{s}^{3/2} }
                { \tau_s \sum_t \tau_t^{-1} }
\le \left( \gamma_n \max_s \tau_s^{-1} \nub{s} \right)
	\sum_{s=1}^N \frac{ \tau_s^{-1} \nub{s}^{1/2} }
                	{ \sum_t \tau_t^{-1} }.
\end{equation}
By concavity of the square root function,
\begin{equation} \label{eq:concavpayoff}
\sum_{s=1}^N \frac{ \tau_s^{-1} \sqrt{ \nub{s} \sum_t \nub{t}^{-1} } }
                { \sum_t \tau_t^{-1} }
\le
\left( \sum_{s=1}^N \frac{ \tau_s^{-1} \sum_t \nub{t}^{-1} }
			{ \nub{s}^{-1} \sum_t \tau_t^{-1} } \right)^{1/2}
= \sqrt{ \sum_{s=1}^N \frac{ u_s }{ \wopt_s } },
\end{equation}
where we have used the definitions of $\{ u_s \}_{s=1}^N$
and $\{ \wopt_s \}_{s=1}^N$ in Equations~\eqref{eq:def:u}
and~\eqref{eq:def:wopt:reminder}, respectively.
Rearranging and applying this bound to 
Equation~\eqref{eq:whatu:sum1:a},
\begin{equation*} 
\sum_{s=1}^N \frac{ \gamma_n \tau_s^{-2} \nub{s}^{3/2} }
                { \sum_t \tau_t^{-1} }
\le \gamma_n \left( \max_s \tau_s^{-1} \nub{s} \right)
	\left(
	 \frac{ \sum_{s=1}^N  u_s/ \wopt_s  }
		{ \sum_{s=1}^N \nub{s}^{-1} } \right)^{1/2}
\end{equation*}
Applying our growth assumption from Equation~\eqref{eq:assum:harmonicish},
we conclude that
\begin{equation} \label{eq:whatu:sum1:final}
\sum_{s=1}^N \frac{ \gamma_n \tau_s^{-1} \nub{s}^{3/2} }
                { \tau_s \sum_t \tau_t^{-1} }
\le \left( \sum_{s=1}^N \nub{s}^{-1} \right)^{-1/2} \frac{ \log n }{ \log n + \log N }.
\end{equation}

Turning to the second sum on the right-hand side of
Equation~\eqref{eq:whatu:bound1},
another application of Equation~\eqref{eq:rhohatinv2tauinv}
to bound $\rhohat_s^{-1}$, followed by Equation~\eqref{eq:assum:ratiogrowth},
implies that
\begin{equation*} \begin{aligned}
\sum_{s=1}^N \sum_{t=1}^N
	\frac{ \gamma_n \nub{t} \nub{s}^{1/2} }
		{ \tau_s \tau_t^2 \left( \sum_t \rhohat_t^{-1} \right)
			\left( \sum_t \tau_t^{-1} \right) }
&\le C\gamma_n \left( \max_s \frac{ \nub{s} }{\tau_s} \right)
	\sum_{s=1}^N \sum_{t=1}^N
		\frac{ \tau_t^{-1} \tau_s^{-1} \nub{s}^{1/2} }
                      { \left( \sum_t \tau_t^{-1} \right)^2 } \\
&= C\gamma_n \left( \max_s \frac{ \nub{s} }{\tau_s} \right)
	\sum_{s=1}^N \frac{ \tau_s^{-1} \nub{s}^{1/2} }
                      { \sum_t \tau_t^{-1} }.
\end{aligned} \end{equation*}
Applying Equation~\eqref{eq:concavpayoff} once again, we have
\begin{equation*}
\sum_{s=1}^N \sum_{t=1}^N
        \frac{ \gamma_n \nub{t} \nub{s}^{1/2} }
                { \tau_s \tau_t^2 \left( \sum_t \rhohat_t^{-1} \right)
                        \left( \sum_t \tau_t^{-1} \right) }
\le C\gamma_n \left( \max_s \frac{ \nub{s} }{\tau_s} \right)
	\left( \frac{ \sum_{s=1}^N  u_s / \wopt_s } 
		{ \sum_t \nub{t}^{-1} } \right)^{1/2}.
\end{equation*}
Once again applying our growth assumption in
Equation~\eqref{eq:assum:harmonicish}, we have
\begin{equation*}
\sum_{s=1}^N \sum_{t=1}^N
        \frac{ \gamma_n \nub{t} \nub{s}^{1/2} }
                { \tau_s \tau_t^2 \left( \sum_t \rhohat_t^{-1} \right)
                        \left( \sum_t \tau_t^{-1} \right) }
\le 
C \left( \sum_{s=1}^N \nub{s}^{-1} \right)^{-1/2}
	\frac{ \log n }{ \log n + \log N }.
\end{equation*}
Applying this bound and Equation~\eqref{eq:whatu:sum1:final}
to Equation~\eqref{eq:whatu:bound1},
\begin{equation*}
\sum_{s=1}^N |\what_s-u_s| \nub{s}^{1/2}
\le
C \left( \sum_{s=1}^N \nub{s}^{-1} \right)^{-1/2}
	\frac{ \log n }{ \log n + \log N },
\end{equation*}
completing the proof.
\end{proof}

We are now ready to prove Theorem~\ref{thm:subgamma:XhatXopt}.

\begin{delayedproof}{Proof of Theorem~\ref{thm:subgamma:XhatXopt}}
We first establish the bound on $\| \Xopt - X \Vopt \|_{\tti}$.
Defining $\Aopt = \sum_w \wopt \AA{s}$, we have $\Xopt = \ASE( \Aopt, d )$.
Setting $w_s = \wopt_s$ for $s \in [N]$ in Theorem~\ref{thm:subexpXtilde},
Equation~\eqref{eq:paragrowth} holds
by virtue of our assumption in Equation~\eqref{eq:subgammagrowth}, and thus
\begin{equation} \label{eq:XoptXtti}
\| \Xopt - X \Vopt \|_{\tti}
\le
\frac{ Cd \log n }{ \lambda^{1/2}(P) \sqrt{ \sum_{s=1}^N \nub{s}^{-1} } }
+ \frac{ C d n \kappa(P) \log^2 n }{ \lambda_d^{3/2}(P) 
        	\sum_{s=1}^N \nub{s}^{-1} }.
\end{equation}

To prove the corresponding bound on $\| \Xhat - X V \|_{\tti}$,
write $\Ahat = \sum_s \what_s \AA{s}$ for ease of notation,
where $\{ \what_s \}_{s=1}^N$ are as in Equation~\eqref{eq:def:reminder:what}.
So long as for some $c_0 \in [0,1)$, 
\begin{equation} \label{eq:spectralrequirement}
\left\| \Ahat - P \right\| < c_0 \lambda_d(P) ~~~\text{ eventually,}
\end{equation}
Lemma~\ref{lem:genericbound} implies that
\begin{equation} \label{eq:XhatX:1}
\begin{aligned}
\| \Xhat &- X V \|_{\tti} \\
&\le
\frac{ \left\| \left( \Ahat - P \right) \UP \right\|_{\tti} }
     { \lambda_d^{1/2}(P) } 
+
\frac{ C \left\| \UP^T \left( \Ahat - P \right) \UP \right\|_F }
     { \lambda_d^{1/2}(P) } 
+ \frac{ C d \kappa(P) \left\| \Ahat  - P \right\|^2 }
     { \lambda_d^{3/2}(P) }
\end{aligned} \end{equation}
We will assume for now that Equation~\eqref{eq:spectralrequirement} holds,
and we will bound each of the norms on the right-hand
side of Equation~\eqref{eq:XhatX:1} in turn.

Noting that $\sum_s (\what_s - \wopt_s)P = 0$
and applying the triangle inequality,
\begin{equation} \label{eq:AhatP:ttitri}
\left\| \left( \Ahat-P \right) \UP \right\|_{\tti}
\le \sum_{s=1}^N \left| \what_s-\wopt_s \right|
		\left\| (\AA{s}-P) \UP \right\|_{\tti}
   + \left\| \left( \Aopt -P \right) \UP \right\|_{\tti}.
\end{equation}
By a slight adaptation of the arguments used to prove
the non-delocalized version of Proposition~\ref{prop:APtti},
it holds with probability $1-O(n^{-2})$ that for all $s \in [N]$,
\begin{equation} \label{eq:indivbound:tti}
\left\| (\AA{s}-P) \UP \right\|_{\tti}
\le C\sqrt{d} \nub{s}^{1/2}(\log N + \log n),
\end{equation}
where the $\log N$ term is included to permit a union bound over all
$s \in [N]$.
Similarly, the non-delocalized version of Proposition~\ref{prop:APtti}
implies that with probability at least $1-O(n^{-2})$,
\begin{equation*}
\left\| \left( \Aopt -P \right) \UP \right\|_{\tti}
\le \sqrt{d} \left( \sum_{s=1}^N \wopt_s^2 \nub{s} \right)^{1/2} \log n
= \sqrt{d} \left( \sum_{s=1}^N \nub{s}^{-1} \right)^{-1/2} \log n,
\end{equation*}
where we have plugged in the definition of $\wopt_s$.
Applying this and Equation~\eqref{eq:indivbound:tti}
to Equation~\eqref{eq:AhatP:ttitri},
it holds with probability at least $1-O(n^{-2})$ that
\begin{equation} \label{eq:AhatP:tti}
\begin{aligned}
&\left\| \left( \Ahat-P \right) \UP \right\|_{\tti} \\
&~~~~~~~~~\le C \sqrt{d} \sum_{s=1}^N \left| \what_s-\wopt_s \right|
			\nub{s}^{1/2}(\log N + \log n)
	+ \frac{ C\sqrt{d} \log n }{ \sqrt{ \sum_{s=1}^N \nub{s}^{-1} } }
\end{aligned} \end{equation}
By a similar argument,
this time using the non-delocalized version of Proposition~\ref{prop:UTAPU}
applied to $\AA{1},\AA{2},\dots,\AA{N}$ and $\Aopt$,
it holds with probability $1 - O(n^{-2})$ that
\begin{equation} \label{eq:AhatP:frob}
\begin{aligned}
& \left\| \UP^T \left( \Ahat-P \right) \UP \right\|_F \\
&~~~~~~~~~\le C d \sum_{s=1}^N \left| \what_s - \wopt_s \right|
			\nub{s}^{1/2}(\log N + \log n)
	+ \frac{ C d \log n }{ \sqrt{ \sum_{s=1}^N \nub{s}^{-1} } }.
\end{aligned} \end{equation}
Finally, applying Lemma~\ref{lem:mxspecbound} to $\Aopt$
and with $N=1$ to
each of $\AA{1},\AA{2},\dots,\AA{N}$ separately,
it holds with probability at least $1 - O(n^{-2})$ that
\begin{equation*}
\left\| \Aopt - P \right\|
\le C \left( \sum_{s=1}^N \wopt_s^2 \nub{s} \right)^{1/2} \sqrt{n} \log n
= \frac{ C \sqrt{n} \log n }{ \sqrt{ \sum_{s=1}^N \nub{s}^{-1} } }
\end{equation*}
and for all $s=1,2,\dots,N$,
\begin{equation*}
\left\| \AA{s} - P \right\| \le C \nub{s}^{1/2} \sqrt{n}
		\left( \log n + \log N \right),
\end{equation*}
with the $\log N$ terms again included to allow a union bound over
all $s \in [N]$.
Applying the triangle inequality,
\begin{equation} \label{eq:AhatP:spectral}
\left\| \Ahat - P \right\|
\le C\sqrt{n} \left(
	\sum_{s=1}^N \left| \what_s - \wopt_s \right| \nub{s}^{1/2}
	\left( \log n + \log N \right)
  + \frac{ \log n }{ \sqrt{ \sum_{s=1}^N \nub{s}^{-1} } } \right)
\end{equation}

Plugging the bounds in Equations~\eqref{eq:AhatP:tti},~\eqref{eq:AhatP:frob}
and~\eqref{eq:AhatP:spectral} into Equation~\eqref{eq:XhatX:1},
we have that with probability $1-O(n^{-2})$,
\begin{equation*} \begin{aligned}
\| \Xhat &- X V \|_{\tti} \\
&\le
\frac{ C \sqrt{d} }{ \lambda_d^{1/2}(P) }
	\left(
	\sum_{s=1}^N \left| \what_s-\wopt_s \right|
			\nub{s}^{1/2}(\log N + \log n)
	+ \frac{ \log n }{ \sqrt{ \sum_{s=1}^N \nub{s}^{-1} } } \right) \\
&~~~+
\frac{ C d}{ \lambda_d^{1/2}(P) }
\left( \sum_{s=1}^N \left| \what_s - \wopt_s \right|
			\nub{s}^{1/2}(\log N + \log n)
	+ \frac{ \log n }{ \sqrt{ \sum_{s=1}^N \nub{s}^{-1} } } \right) \\
&~~~+ \frac{ C d n \kappa(P) } { \lambda_d^{3/2}(P) }
  \left( \sum_{s=1}^N \left| \what_s - \wopt_s \right| \nub{s}^{1/2}
        	\left( \log n + \log N \right)
  + \frac{ \log n }{ \sqrt{ \sum_{s=1}^N \nub{s}^{-1} } } \right)^2.
\end{aligned} \end{equation*}
Collecting terms, 
\begin{equation*} 
\begin{aligned}
\| &\Xhat - X V \|_{\tti} \\
&\le \frac{ C d}{ \lambda_d^{1/2}(P) }
\left( \sum_{s=1}^N \left| \what_s - \wopt_s \right|
                        \nub{s}^{1/2}(\log N + \log n)
        + \frac{ \log n }{ \sqrt{ \sum_{s=1}^N \nub{s}^{-1} } } \right) \\
&~~~+ \frac{ C d n \kappa(P) } { \lambda_d^{3/2}(P) }
  \left( \sum_{s=1}^N \left| \what_s - \wopt_s \right| \nub{s}^{1/2}
                \left( \log n + \log N \right)
  + \frac{ \log n }{ \sqrt{ \sum_{s=1}^N \nub{s}^{-1} } } \right)^2.
\end{aligned} \end{equation*}
Comparing this bound with that in Equation~\eqref{eq:XoptXtti},
it will suffice for us to show that for all suitably large $n$,
\begin{equation} \label{eq:nubgrowth:goal1}
\sum_{s=1}^N \left| \what_s - \wopt_s \right|
                        \nub{s}^{1/2}(\log N + \log n)
	\le C \frac{ \log n }{ \sqrt{ \sum_{s=1}^N \nub{s}^{-1} } }.
\end{equation}
Note that applying Equation~\eqref{eq:nubgrowth:goal1}
to Equation~\eqref{eq:AhatP:spectral},
followed by our growth assumption in Equation~\eqref{eq:subgammagrowth}, 
will imply that
\begin{equation*}
\| \Ahat - P \|
\le \frac{ C \sqrt{n} \log n }{ \sqrt{ \sum_{s=1}^N \nub{s}^{-1} } }
= o\left( \lambda_d(P) \right),
\end{equation*}
so that Equation~\eqref{eq:spectralrequirement} holds eventually.
Thus, our proof will be complete once we establish
Equation~\eqref{eq:nubgrowth:goal1}.

Applying the triangle inequality,
\begin{equation} \label{eq:whatwopt:triangle}
\sum_{s=1}^N \left| \what_s - \wopt_s \right| \nub{s}^{1/2}
\le
\sum_{s=1}^N \left| \what_s - u_s \right| \nub{s}^{1/2}
+
\sum_{s=1}^N \left| u_s - \wopt_s \right| \nub{s}^{1/2}.
\end{equation}
By Proposition~\ref{prop:whatu}, with probability $1-O(n^{-2})$,
the first of these two sums is bounded by
\begin{equation} \label{eq:whatu:repeat}
\sum_{s=1}^N \left| \what_s - u_s \right| \nub{s}^{1/2}
\le
\frac{ C \log n }{ \log n + \log N }
\left( \sum_{s=1}^N \nub{s}^{-1} \right)^{-1/2},
\end{equation}
The second sum on the right-hand side of Equation~\eqref{eq:whatwopt:triangle}
is bounded by
\begin{equation*} \begin{aligned}
\sum_{s=1}^N \left| u_s - \wopt_s \right| \nub{s}^{1/2}
&= \sum_{s=1}^N \wopt_s \left| 1 - \frac{ u_s }{ \wopt_t } \right|
			\nub{s}^{1/2} \\
&\le
\sqrt{ \sum_{s=1}^N \wopt_s \left(1 - \frac{u_s}{\wopt_s}\right)^2 }
\sqrt{ \sum_{s=1}^N \wopt_s \nub{s} },
\end{aligned} \end{equation*}
where the inequality follows from Cauchy-Schwarz.
Noting that
\begin{equation*}
\wopt_s \nub{s} = \frac{1}{ \sum_t \nub{t}^{-1} },
\end{equation*}
it follows that
\begin{equation*}
\sum_{s=1}^N \left| u_s - \wopt_s \right| \nub{s}^{1/2}
\le \sqrt{ \frac{ N \sum_{s=1}^N \wopt_s \left(1 - \frac{u_s}{\wopt_s}\right)^2 }
		{ \sum_t \nub{t}^{-1} } }.
\end{equation*}
Applying this and Equation~\eqref{eq:whatu:repeat}
to Equation~\eqref{eq:whatwopt:triangle}, we have
\begin{equation*}
\sum_{s=1}^N \left| \what_s - \wopt_s \right| \nub{s}^{1/2}
\le
\frac{ 1 }{ \sqrt{ \sum_{s=1}^N \nub{s}^{-1} } }
\left(
\frac{ C \log n }{ \log n + \log N }
+ \sqrt{ N \sum_{s=1}^N \wopt_s \left(1 - \frac{u_s}{\wopt_s}\right) }
\right).
\end{equation*}
Applying the growth assumption in Equation~\eqref{eq:assum:harmonicish2}
yields
\begin{equation*}
\sum_{s=1}^N \left| \what_s - \wopt_s \right| \nub{s}^{1/2}
\le
\frac{ C \log n}
	{ \left( \log n + \log N \right) \sqrt{ \sum_{s=1}^N \nub{s}^{-1} } }.
\end{equation*}
Multiplying by $(\log n + \log N)$
establishes Equation~\eqref{eq:nubgrowth:goal1},
completing the proof.
\end{delayedproof}

\newpage 
\bibliographystyle{plainnat}
\bibliography{biblio}

\end{document}